\theoremstyle{thmstyletwo}%
\newtheorem{theorem}{Theorem}%
\newtheorem{proposition}{Proposition}%
\newtheorem{lemma}{Lemma}
\newtheorem{definition}{Definition}
\newtheorem{assumption}{Assumption}
\newtheorem{remark}{Remark}%
\numberwithin{equation}{section}
\DeclareOldFontCommand{\rm}{\normalfont\rmfamily}{\mathrm}
\pgfplotsset{%
every axis/.append style={width=.45\textwidth,
                              axis x line=bottom, axis y line=left,
                              x axis line style={thick,->}, y axis line style={thick,->},
                              tick align=inside, tick style={thick},
                              every x tick label/.style={font=\footnotesize},
                              every y tick label/.style={font=\footnotesize},
                              },
every axis legend/.append style={
                              legend columns=1,
                              font=\footnotesize,
                              draw=none,
                              fill=white,
                              },
every axis x label/.style={at={(0.5,0.1)},below,fill=none,fill opacity=1,text opacity=1},
every axis y label/.style={at={(0.1,0.5)},fill=none,fill opacity=1,text opacity=1,rotate=90},
compat=newest,
}
\newcommand{\rd}{\, {\rm d}}
\newcommand{\wal}{{\rm wal}}
\newcommand{\bsx}{\boldsymbol{x}}
\newcommand{\bsy}{\boldsymbol{y}}
\newcommand{\bsk}{\boldsymbol{k}}
\newcommand{\bsm}{\boldsymbol{m}}
\newcommand{\bsq}{\boldsymbol{q}}
\newcommand{\bsell}{\boldsymbol{\ell}}
\newcommand{\bsgamma}{\boldsymbol{\gamma}}
\newcommand{\bsnu}{\boldsymbol{\nu}}
\newcommand{\bszero}{\boldsymbol{0}}
\newcommand{\bsone}{\boldsymbol{1}}
\newcommand{\bstwo}{\boldsymbol{2}}
\newcommand{\bsdelta}{\boldsymbol{\delta}}
\newcommand{\stu}{\mathrm{stu}}
\newcommand{\rat}{\mathrm{rat}}
\newcommand{\uu}{\mathfrak{u}}
\newcommand{\vv}{\mathfrak{v}}
\newcommand{\ww}{\mathfrak{w}}
\newcommand{\NN}{\mathbb{N}}
\newcommand{\RR}{\mathbb{R}}
\newcommand{\EE}{\mathbb{E}}
\newcommand{\cP}{\mathcal{P}}
\title[Second order interlaced polynomial lattice rules]{Second order interlaced polynomial lattice rules for integration over $\mathbb{R}^s$}
\author{Tiangang Cui*\ORCID{0000-0002-4840-8545}
\address{\orgdiv{School of Mathematics and Statistics}, \orgname{The University of Sydney}, \orgaddress{\street{Camperdown}, \postcode{2006}, \state{NSW}, \country{Australia}}}}
\author{Josef Dick\ORCID{orcid.org/0000-0003-0142-6022}
\address{\orgdiv{School of Mathematics and Statistics}, \orgname{UNSW Sydney}, \orgaddress{\street{Kensington}, \postcode{2052}, \state{NSW}, \country{Australia}}}}
\author{Friedrich Pillichshammer\ORCID{0000-0001-6952-9218}
\address{\orgdiv{Institut f\"{u}r Finanzmathematik und Angewandte Zahlentheorie}, \orgname{Johannes Kepler Universit\"{a}t Linz}, \orgaddress{\street{Altenbergerstra{\ss}e 69}, \postcode{A-4040}, \state{Linz}, \country{Austria}}}}
\begin{document}

\abstract{
We study numerical integration of functions $f: \mathbb{R}^{s} \to \mathbb{R}$ with respect to a probability measure. By applying the corresponding inverse cumulative distribution function, the problem is transformed into integrating an induced function over the unit cube $(0,1)^{s}$. We introduce a new orthonormal system: \emph{order~2 localized Walsh functions}. These basis functions retain the approximation power of classical Walsh functions for twice‑differentiable integrands while inheriting the spatial localization of Haar wavelets. Localization is crucial because the transformed integrand is typically unbounded at the boundary. We show that the worst‑case quasi‑Monte Carlo integration error decays like $\mathcal{O}(N^{-1/\lambda})$ for every $\lambda \in (1/2,1]$. 
As an application, we consider elliptic partial differential equations with a finite number of log‑normal random coefficients and show that our error estimates remain valid for their stochastic Galerkin discretizations by applying a suitable importance sampling density. 
}

\keywords{Numerical integration; quasi-Monte Carlo; interlaced polynomial lattice rules; inversion method; strong polynomial tractability.}

\maketitle

\section{Introduction}

Quasi-Monte Carlo (QMC) methods have become indispensable tools for the efficient numerical approximation of high‐dimensional integrals arising in uncertainty quantification, financial mathematics, and statistical physics. By replacing independent random sampling with carefully constructed deterministic point sets—so‐called low‐discrepancy sequences—QMC rules often deliver dramatically improved convergence rates compared to ordinary Monte Carlo, without sacrificing robustness in high dimensions for integrands whose input on later coordinates diminishes. In classical QMC theory, much of the focus has been on integrals over the unit cube $[0,1]^s$ against the uniform probability measure, where, under sufficient smoothness of the integrand, higher‐order convergence rates of order $O(N^{-\alpha}(\log N)^{(s-1)\alpha})$ (for some $\alpha\ge0$) can be achieved using higher order digital nets or interlaced polynomial lattice rules \citep{DP10}, where $N$ is the number of employed points in the QMC rule. In particular, for sufficiently smooth integrands on $[0,1]^s$, order‐two digital nets attain an $O(N^{-2+\delta})$ convergence rate for any $\delta > 0$ with dimension‐independent constants under appropriately chosen weights which moderate the influence of subsets of input variables \citep{dick08, G15}.

However, many applications require integration over the unbounded domain $\mathbb{R}^s$ with respect to a product probability density (e.g., Gaussian or log‐Gaussian) of the form
\begin{equation}\label{def:int}
\int_{\mathbb{R}^s} F(\bsx) \varphi(\bsx) \,\mathrm{d} \bsx,
\end{equation}
where 
$\varphi(\boldsymbol{x})=\prod_{j=1}^s\varphi(x_j)$ is a product density on $\mathbb{R}^s$ (e.g., a standard Gaussian or other probability density function (PDF) with well‐defined derivative) and $F$ belongs to a normed space of sufficiently smooth functions on $\mathbb{R}^s$. Integrals of the form \eqref{def:int} arise, for example, when computing expected values of output functionals of the (deterministic) PDE solution $u(\cdot,\boldsymbol x)$ with log‐normal diffusion coefficient $a(\cdot,\boldsymbol x)=\exp(Z(\cdot,\boldsymbol x))$, where $Z$ is a Gaussian random field parameterized by the vector of random variables $\boldsymbol x\in\mathbb{R}^s$ via a truncated Karhunen--Loève expansion or in computing the expected value of option prices in financial mathematics.

A QMC theory for lattice rules was developed in \citet{NK14}, who gave a fast component‐by‐component (CBC) construction of randomly shifted lattice rules that achieve $O(N^{-1+\delta})$ for integrands in weighted Sobolev spaces on $\mathbb{R}^s$—again with dimension‐independent constants—by combining Gaussian‐measure taming with suitable product‐and‐order‐dependent (POD) weights. \citet{K19} further developed lattice‐based QMC theory for log‐normal PDEs, again obtaining $O(N^{-1+\delta})$ quadrature error. More recent nonasymptotic analyses, e.g., \cite{LT23}, refine these estimates by explicitly tracking dependence on the finite number of QMC points and the dimension, but still only establish first‐order convergence in practice. Numerical integration for PDEs with log-normal coefficients was studied in \cite{HS19}. In \cite{NN21}, a multivariate decomposition method for PDEs with log-normal random coefficients was analyzed. Higher order convergence rates for QMC integration over unbounded domains were proven in \cite{DILP18}, but with a very poor dependence of the error bound on the dimension. Another approach has been proposed in \cite{NS23}, where higher order QMC methods for integration over $\mathbb{R}^s$ were obtained using lattice rules on a truncated domain together with periodization. For general quadrature rules achieving the optimal order of convergence rates (i.e. including the best possible exponent of the $\log N$ factor in the upper bound), see \cite{DN24}. An approach based on avoiding the boundary of the unit cube after the transformation, as studied in \cite{O06}, together with a smoothing procedure has been studied in \cite{OWH24}. First order numerical integration using digital nets has been studied in \cite{DP24}. The dimension dependence of numerical integration of Hermite spaces over $\mathbb{R}^s$ has been studied in \cite{LPE23}. In general, it is a difficult problem to obtain optimal convergence rates beyond $1$ and dimension-independent bounds. For instance, the suboptimality of Gauss-Hermite rules for integration over $\mathbb{R}$ has been shown in \cite{KSG23}, and hence sparse grid methods based on Gauss-Hermite rules also cannot yield optimal convergence rates for integration over $\mathbb{R}^s$.

Here we construct and analyze second order digital nets on $[0,1)^s$ transformed to $\mathbb{R}^s$ using an inverse normal CDF via a suitable localized Walsh bases representation. We establish convergence rates of the numerical integration error of order $N^{-2 + \delta}$. 

The outline of this paper is as follows. In the forthcoming Section~\ref{sec:lwf}, we introduce the novel concept of localized Walsh functions, which serve as a crucial analytical tool in our study. These functions themselves merit attention, as they combine the features of classical Walsh functions and Haar functions: The Walsh coefficients of twice differentiable functions defined on $[0,1)$ of the localized Walsh functions decay as fast as those of classical Walsh coefficents (as shown in Section~\ref{sec:serexp}), and on the other hand, the localized Walsh functions have shrinking support similar to Haar functions. In Remark~\ref{rem_w_local} below we point out the technical obstruction which prevents us from using classical Walsh functions. On the other hand, we also cannot use Haar functions directly as the Haar coefficients do not show an improved rate of convergence for twice differentiable functions, as is true for the Walsh coefficients (of classical and localized Walsh functions). In Section~\ref{sec:serexp}, we also analyze the series expansion of functions on $\mathbb{R}^s$ with respect to those localized Walsh functions. Assuming certain conditions on the weight functions $\varphi$, $\psi$, and $\omega$—which we collectively denote by Assumption~\ref{assumptionA} and which appear in the definitions of the norm \eqref{Fnorm:infty}—we derive sharp estimates for the corresponding Walsh coefficients. In Section~\ref{sec:intPLPS}, we apply these coefficient bounds to study the worst-case integration error of QMC rules. Specifically, we employ interlaced polynomial lattice point sets of order two, yielding our main result stated in Theorem~\ref{thm:error_bound}. In Section~\ref{sec:ex:ass:A}, we present significant examples of weight functions $\varphi$, $\psi$, and $\omega$ that satisfy Assumption~\ref{assumptionA}. Here we focus on integration with respect to the normal distribution. Section~\ref{sec:Appl} then applies our theoretical findings to an elliptic PDE with a finite number of log-normal random coefficients. For the resulting integration problem, we use importance sampling in conjunction with Theorem~\ref{thm:error_bound}, obtaining—under suitable hypotheses—an integration error of order $N^{-2+\delta}$. Finally, numerical experiments are presented in Section~\ref{sec:numerics}.

\section{Preliminaries}

For a PDF $\varphi:\RR \rightarrow [0,\infty)$ let
\begin{equation}\label{def:Phi}
\Phi(x) := \int_{-\infty}^x \varphi(t) \rd t, \quad \mbox{for } x \in \mathbb{R}.
\end{equation}
be the corresponding cumulative distribution function (CDF). We assume that the inverse function $\Phi^{-1}:(0,1) \to \mathbb{R}$ of $\Phi$ is well defined. For vector arguments, we apply $\Phi$ and $\Phi^{-1}$ coordinate-wise. We approximate the integral \eqref{def:int} by a QMC rule of the form
\begin{equation}\label{QMC:rule}
\frac{1}{N} \sum_{n=0}^{N-1} F(\Phi^{-1}(\bsx_n)),
\end{equation}
where $\mathcal{P}=\{\bsx_0,\bsx_1,\ldots,\bsx_{N-1}\}$ is a set of $N$ points in $(0,1)^s$. The accuracy of this approximation is measured by the absolute integration error $|\mathrm{err}(F; \varphi; \mathcal{P})|$, where
\begin{equation*}
\mathrm{err}(F; \varphi; \mathcal{P}) := \int_{\mathbb{R}^s} F(\bsx) \varphi(\bsx) \rd \bsx - \frac{1}{N} \sum_{n=0}^{N-1} F(\Phi^{-1}(\bsx_n)).
\end{equation*}

We consider functions $F: \RR^s \rightarrow \RR$ with finite $\bsgamma$-weighted $q$-norm, defined as follows. For $\emptyset \not= \uu \subseteq [s]$ and for $\vv \subseteq \uu$ put
\begin{equation}\label{eq_uvnorm}
\|F\|_{\uu, \vv} :=  \sum_{\ww \subseteq \vv} \int_{\mathbb{R}^{|\uu|}} \left| \int_{\mathbb{R}^{s-|\uu|}} \frac{\partial^{|\uu| + |\ww|} F}{\partial \bsx_\ww \partial \bsx_{\uu}}(\bsx)\prod_{j \in [s]\setminus \uu} \varphi(x_j) \rd \bsx_{[s] \setminus \uu} \right|   \prod_{j \in \ww} \psi(x_j) \prod_{j \in \vv \setminus \ww} \omega(x_j)    \rd \bsx_{\uu},  
\end{equation}
for  weight functions $\psi,\omega:\RR \rightarrow (0,\infty)$. The $\|F\|_{\uu, \vv}$ will be essential in estimating localized Walsh coefficients in Lemma~\ref{le:hatF2} and arise naturally from the definition of the Walsh coefficients and integration by parts (see the proof of Lemma~\ref{le:hatF2} for details). Then we define the norm
\begin{equation}\label{Fnorm:infty}
 \|F\|_{\bsgamma,\varphi,\psi,\omega} := \max_{\emptyset \not= \uu \subseteq [s], \vv \subseteq \uu} \frac{1}{\gamma_{\uu}} \|F\|_{\uu, \vv},
\end{equation}
where $[s]:=\{1,\ldots,s\}$, $\bsgamma=\{\gamma_\uu\}_{\uu \subseteq [s]}$ are positive real coordinate-weights and $\psi,\omega$ are weight functions $\RR \rightarrow (0,\infty)$. In Section~\ref{sec:Appl}, where we apply our results, we will choose $\psi=\omega$.  Occasionally, for $\ww \subseteq \uu \subseteq [s]$, $\uu \not=\emptyset$, we may write the derivatives as $$\frac{\partial^{|\uu| + |\ww|} F(\bsx)}{\partial \bsx_\ww \partial \bsx_{\uu}}=\frac{\partial^{|(\bsone_{\uu \setminus \ww},\bstwo_{\ww},\bszero)|}}{\partial \bsx_{(\bsone_{\uu \setminus \ww},\bstwo_{\ww},\bszero)}} F(\bsx)=F^{(\bsone_{\uu \setminus \ww},\bstwo_{\ww},\bszero)}(\bsx)$$ meaning that we take the one-fold partial derivative with respect to $x_j$ when $j \in \uu \setminus \ww$ and the two-fold partial derivative with respect to $x_j$ when $j \in \ww$. 

In the following, we define the worst-case integration error of integrating a function $F$ with respect to the probability density function $\varphi$. 

\begin{definition}
We consider functions $F$ in a function space whose norm is weighted by the functions $\psi$ and $\omega$, see \eqref{eq_uvnorm}, \eqref{Fnorm:infty}. Let $\mathcal{F}_{\bsgamma,\varphi,\psi,\omega} := \{F \in L_2(\mathbb{R}^s, \varphi): \|F\|_{\bsgamma, \varphi,\psi, \omega} < \infty\}$. 
Our analysis focuses on the absolute integration error $|\mathrm{err}(F; \varphi; \mathcal{P})|$ for functions $F$ that have a finite norm. 
In particular, we study the worst-case error, defined by 
\begin{equation}\label{def:wce}
{\rm wce}_{\bsgamma,\varphi,\psi,\omega}(\cP) := \sup_{F \in \mathcal{F}_{\bsgamma,\varphi,\psi,\omega} \atop \|F\|_{\bsgamma,\varphi,\psi,\omega}\le 1} |\mathrm{err}(F; \varphi; \mathcal{P})|.
\end{equation}
\end{definition}

\section{Localized Walsh functions}\label{sec:lwf}

As a technical auxiliary tool, we introduce the concept of localized Walsh functions on the bounded interval $[0,1)$, designed to combine the properties of standard Walsh functions in base $2$ and Haar functions. We then extend the concept of localized Walsh functions to probability density functions with unbounded support via their CDFs. 

\begin{definition}[Walsh functions] Let $k = \kappa_0 + \kappa_1 2 + \cdots + \kappa_{m-1} 2^{m-1}$ be the dyadic expansion of $k \in \mathbb{N}_0$ with digits $\kappa_0, \kappa_1, \ldots, \kappa_{m-1} \in \{0,1\}$. Similarly, let $x \in [0,1)$ have the dyadic expansion $x = \xi_1 2^{-1} + \xi_2 2^{-2} + \cdots$ with $\xi_1, \xi_2, \ldots \in \{0, 1\}$, assuming infinitely many of the digits $\xi_i$ differ from $1$. For $k \in \mathbb{N}_0$ we define the $k$-th Walsh function $\mathrm{wal}_k: [0,1) \to \{-1,1\}$ by
\begin{equation*}
\mathrm{wal}_k(x) := (-1)^{\xi_1 \kappa_0 + \xi_2 \kappa_1 + \cdots + \xi_{m} \kappa_{m-1}}.
\end{equation*}
\end{definition}

Details on Walsh functions can be found in \cite{Fine} and \cite{SWS}. In particular, we use the following lemma, whose proof can be found in Appendix~\ref{app:A}:
\begin{lemma}\label{leWal}
For $r \in \NN_0$, $k \in \{2^r,\ldots,2^{r+1}-1\}$ and $x \in [0,1]$ we have
$$\int_0^x \wal_k(t) \rd t = \frac{1}{2^r}\, \wal_k(x) \, \eta (2^r x),$$ 
where $\eta:\RR \rightarrow \RR$ is the one-periodic function defined by $$\eta (x) := 
\left \{  
\begin{array}{ll} 
x & \mbox{if $0 \le x < \frac{1}{2}$,} \\
x-1 & \mbox{if $\frac{1}{2} \le x < 1$,}
\end{array}\right.$$
and consequently $$ \sup_{x \in [0,1]}\left|\int_0^x \wal_k(t) \rd t\right| = \frac{1}{2^{r+1}}.$$
\end{lemma}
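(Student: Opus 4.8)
The plan is to establish the explicit antiderivative formula by exploiting the dyadic structure of Walsh functions, and then read off the supremum bound as an immediate corollary.

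First I would reduce the computation of $\int_0^x \wal_k(t)\rd t$ to a more tractable form by using the scaling structure of Walsh functions on dyadic intervals. For $k \in \{2^r,\ldots,2^{r+1}-1\}$, the dyadic expansion of $k$ has leading digit $\kappa_r = 1$, so $\wal_k(t) = \wal_{2^r}(t)\,\wal_{k - 2^r}(t)$, where $\wal_{2^r}(t) = (-1)^{\xi_{r+1}}$ depends only on the $(r+1)$-st dyadic digit of $t$ and thus is constant (alternately $+1$ and $-1$) on the dyadic intervals of length $2^{-(r+1)}$, while $\wal_{k-2^r}(t)$ with $k - 2^r \in \{0,\ldots,2^r-1\}$ is constant on the coarser dyadic intervals of length $2^{-r}$. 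This means $\wal_k$ is piecewise constant on intervals of length $2^{-(r+1)}$, and on each interval of length $2^{-r}$ it takes the value $+1$ on the left half and $-1$ on the right half (up to the overall sign $\wal_{k-2^r}$ on that coarse interval), which precisely mirrors the $+1/-1$ sawtooth structure captured by $\eta(2^r x)$.

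Next I would verify the claimed formula $\int_0^x \wal_k(t)\rd t = 2^{-r}\wal_k(x)\eta(2^r x)$ directly by checking it on each dyadic interval. The cleanest route is to confirm three things: that the right-hand side vanishes at the left endpoints $x = \ell 2^{-r}$ of the coarse intervals (since $\eta$ is one-periodic and $\eta(0)=0$, so the integral restarts from zero at each such point, reflecting that $\int_{\ell 2^{-r}}^{(\ell+1)2^{-r}} \wal_k(t)\rd t = 0$ by the $+1/-1$ cancellation over a full coarse interval), that the derivative of the right-hand side equals $\wal_k(x)$ almost everywhere, and that the right-hand side is continuous. Differentiating $2^{-r}\wal_k(x)\eta(2^r x)$ on the interior of each length-$2^{-(r+1)}$ piece, where $\wal_k$ is constant, gives $\wal_k(x)\cdot \eta'(2^r x) = \wal_k(x)$ since $\eta'=1$ on the open subintervals; matching the values at the breakpoints then pins down the constant of integration, completing the identity.

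Finally, the supremum bound follows at once: since $|\wal_k(x)| = 1$ and $\eta$ ranges over $[-\tfrac12,\tfrac12)$ with $\sup_x |\eta(x)|$ attained as $x \to \tfrac12^-$ giving $|\eta| \to \tfrac12$, we obtain $\sup_{x\in[0,1]} |\int_0^x \wal_k(t)\rd t| = 2^{-r}\cdot\tfrac12 = 2^{-(r+1)}$. I expect the main obstacle to be purely bookkeeping rather than conceptual: one must handle the boundary behavior of $\eta$ carefully (it is left-continuous with a jump at half-integers, whereas the genuine antiderivative is continuous), so the identity holds because the jump of $\eta$ exactly coincides with a sign change of $\wal_k$, making the product $\wal_k(x)\eta(2^r x)$ continuous even though neither factor is. Confirming this cancellation of discontinuities is the one place where the dyadic digit structure must be tracked with care.
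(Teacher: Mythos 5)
Your proposal is correct and follows essentially the same route as the paper's proof in Appendix~\ref{app:A}: both rest on the observation that $\wal_k$ equals $\wal_k(m/2^r)$ on the left half and $-\wal_k(m/2^r)$ on the right half of each dyadic interval $[\frac{m}{2^r},\frac{m+1}{2^r})$, so the integral vanishes at the points $\frac{m}{2^r}$ and is piecewise linear matching the sawtooth $2^{-r}\eta(2^r\cdot)$ --- the paper merely evaluates the integral directly in the two half-interval cases instead of arguing via derivatives, continuity, and cancellation of jumps. One small imprecision: $\sup_x|\eta(x)|=\tfrac12$ is actually \emph{attained} (at the half-integers, where $\eta=-\tfrac12$, and correspondingly the integral attains $\pm 2^{-(r+1)}$ at $x=\frac{m+1/2}{2^r}$), which is what justifies the equality, rather than a mere upper bound, in the final claim.
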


In QMC theory, Walsh functions have demonstrated their utility in deriving error bounds for numerical integration, especially for functions exhibiting higher degrees of smoothness, see, e.g., \cite{DP10}. However, a weakness of Walsh functions is that they are not localized, which presents challenges in analysing the unbounded transformations $\Phi^{-1}$. To mitigate this limitation, we propose the use of \emph{second-order localized Walsh functions}, which amalgamate the beneficial approximation properties of Walsh functions applicable to twice-differentiable functions with the localized features inherent in Haar functions.


To facilitate the introduction of localized Walsh functions, we define the sets $$N_0:=\{0\} \qquad \mbox{ and }\qquad N_1:=N_0 \cup \{2^{a_1}  :  a_1 \in \NN_0\},$$ consisting of zero and all powers of two, and $$N_2:=N_1 \cup \{2^{a_1}+2^{a_2}:  a_1,a_2 \in \NN_0, a_1 >a_2\}$$ containing zero, all powers of two and all sums of two distinct powers of two. If, for $\alpha \in \{1,2\}$, $k \in N_{\alpha}\setminus N_{\alpha-1}$, then the dyadic expansion of $k$ has exactly $\alpha$ non-zero digits. Letting $\alpha \in \{1,2\}$, we define 
$$L_{k, \alpha} := \begin{cases} \{0\} & \mbox{for } k \in N_{\alpha-1},\\ \{0, 1, \ldots, 2^{a_{\alpha}} -1 \} & \mbox{for } k \in N_\alpha \setminus N_{\alpha-1}.\end{cases}$$ Moreover, for $a \in \NN_0$ and $m \in \{0,1,\ldots,2^a-1\}$ we set $I(m,a):=\left[\frac{m}{2^a},\frac{m+1}{2^a}\right)$. The symbol $\boldsymbol{1}_A$ denotes the indicator function of a set $A$. Now we introduce the concept of order $\alpha$ localized Walsh functions.

\begin{definition}[Localized Walsh functions]\label{def:how}
For $\alpha = 1$ we define the set of \textbf{order $1$ localized Walsh functions} $\{w_{k,m, 1}: m \in L_{k, 1}, k \in N_1\}$ as follows:
\begin{enumerate}
\item For $k \in N_{0}$ we have $k=0$ and we define $w_{0,0,1}:[0,1) \to \mathbb{R}$ as $w_{0,0, 1} := \mathrm{wal}_0 = 1$. 
\item For $k = 2^{a_1} \in N_1 \setminus N_0$ and $m \in L_{k,1}$ we define 
\begin{equation*}
w_{k,m, 1}(x) := 2^{a_{1}/2} \, \boldsymbol{1}_{I(m,a_1)} (x) \mathrm{wal}_{k}(x).
\end{equation*}
\end{enumerate}
For $\alpha = 2$ we define the set of \textbf{order $2$ localized Walsh functions} $\{w_{k,m, 2}: m \in L_{k, 2}, k \in N_2\}$ as follows:
\begin{enumerate}
\item For $k \in N_1$ we define $w_{k,0,2}:[0,1)\to \mathbb{R}$ as $w_{k,0,2} : = \mathrm{wal}_k$. 
\item For $k = 2^{a_1} + 2^{a_2} \in N_2 \setminus N_{1}$ with $a_1 > a_2 \ge 0$ and $m \in L_{k, 2}$ define $w_{k, m, 2}: [0,1) \to\mathbb{R}$ by
\begin{equation*}
w_{k,m, 2}(x) := 2^{a_{2}/2} \, \boldsymbol{1}_{I(m,a_2)} (x) \mathrm{wal}_{k}(x).
\end{equation*}
\end{enumerate}
\end{definition}

As an example, in Figure~\ref{f:wal} the 20-th Walsh function $\wal_{20}$ is plotted over $[0,1]$ and in Figure~\ref{f:w} the localized Walsh functions $w_{20,m,2}$ are plotted for $m \in L_{20,2}$. Note that $20=2^4+2^2 \in N_2\setminus N_1$, i.e., $a_1=4$, $a_2=2$ and hence $L_{20,2}=\{0,1,2,3\}$.   

\begin{figure}[h]
\begin{center}
\includegraphics[width = 0.60\textwidth]{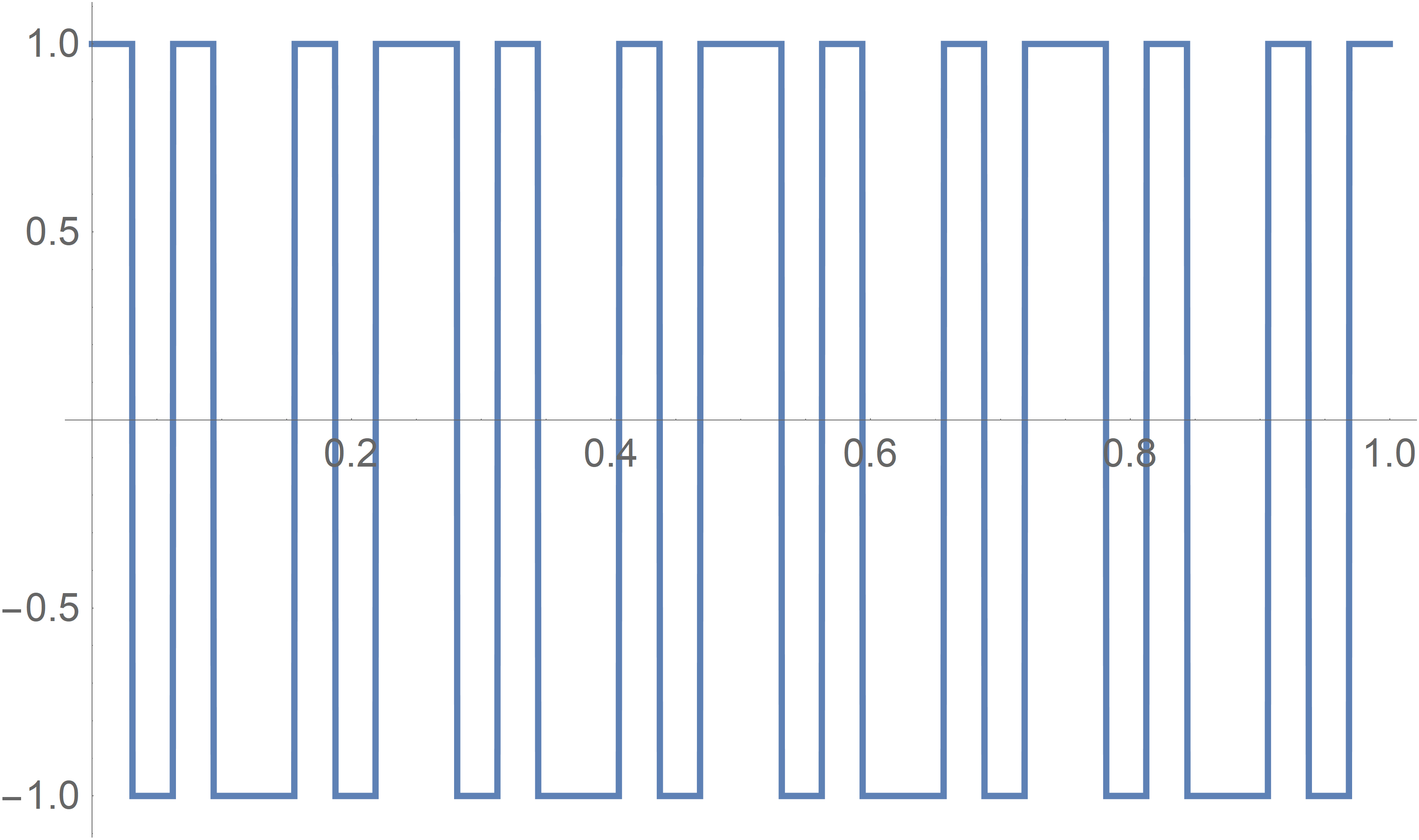}
\caption{Walsh function $\wal_{20}$ }\label{f:wal}
\end{center}
\end{figure}

\begin{figure}[h]
\begin{center}
\includegraphics[width = 0.60\textwidth]{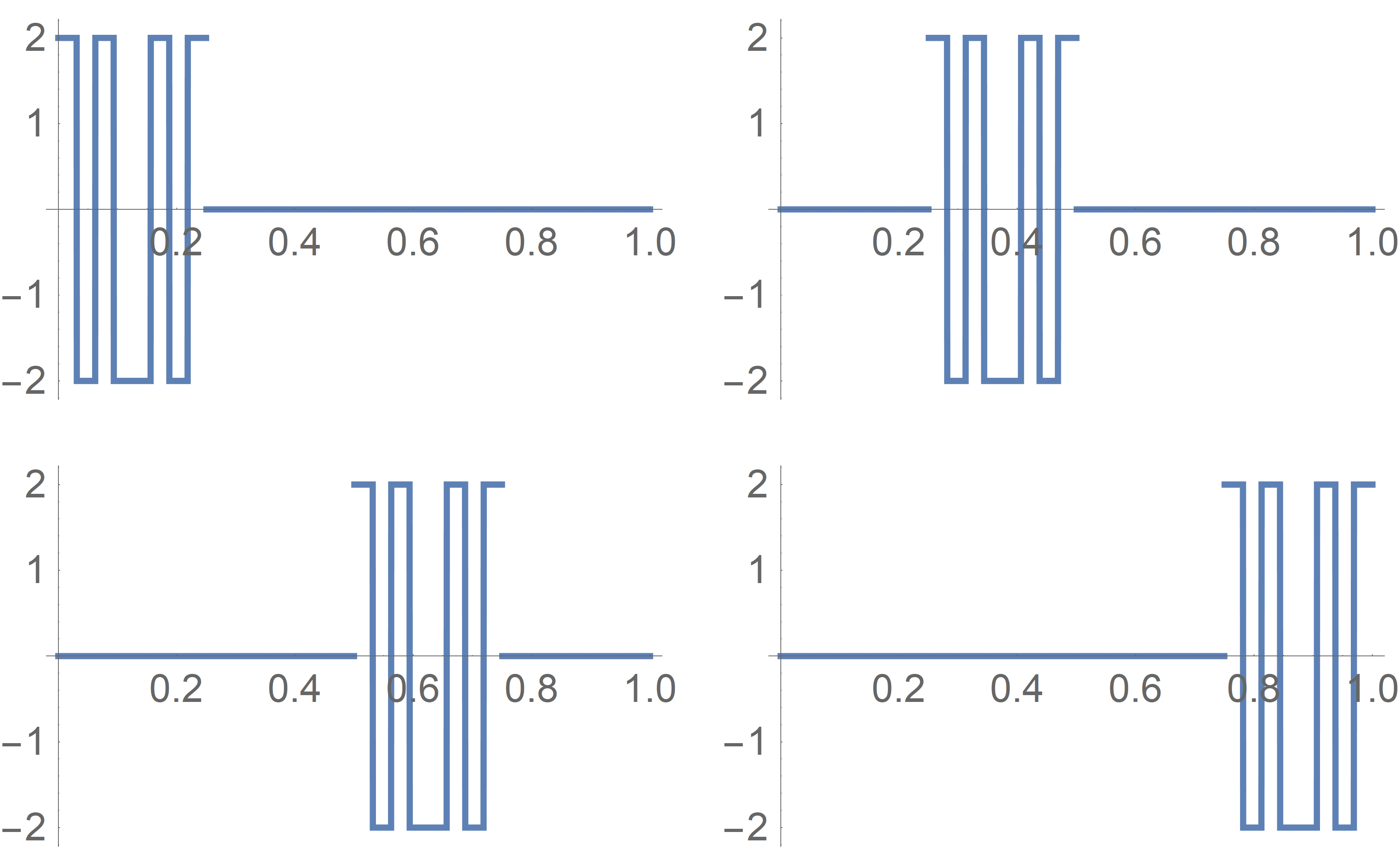}
\caption{Localized Walsh functions $w_{20,m,2}$ for $m \in L_{12,2}=\{0,1,2,3\}$}\label{f:w}
\end{center}
\end{figure}

\begin{remark}
The following two observations are in order:
\begin{enumerate}
\item The functions defined in item (b) of Definition~\ref{def:how} are localized, whereas the functions in item (a) are not. For $\alpha = 1$ there is only a single non-localized function. When $\alpha = 2$, the proportion of non-localized functions becomes negligible in the limit, as indicated by the density $$\lim_{K \to \infty} \frac{\#\{k \in N_1: k \le K\} }{ \#\{k \in N_2: k \le K\} } = 0. $$ Therefore, although not all functions $w_{m,k,\alpha}$ are localized, the vast majority are.

\item Let $k \in N_{\alpha} \setminus N_{\alpha-1}$. Then $k=2^{a_1}$ if $\alpha=1$, and $k = 2^{a_1} + 2^{a_2}$ with $a_1 > a_2 \ge 0$ if $\alpha=2$. For $\ell \in \{0,1,\ldots,2^{a_\alpha}-1\}$, we have the following relation
\begin{equation}\label{Walsh-holWalsh}
\mathrm{wal}_{k+\ell}(x) = \frac{1}{2^{a_\alpha/2}} \mathrm{wal}_{\ell}(x) \sum_{m=0}^{2^{a_\alpha}-1} w_{k,m,\alpha}(x) = \frac{1}{2^{a_\alpha/2}} \sum_{m=0}^{2^{a_\alpha-1}} \mathrm{wal}_{\ell}\left( \frac{m}{2^{a_\alpha}}\right) w_{k,m,\alpha}(x). 
\end{equation}
This follows because for $x \in I(m',a_\alpha)$,
\begin{align*}
\mathrm{wal}_{\ell}(x) \sum_{m=0}^{2^{a_\alpha}-1} w_{k,m,\alpha}(x) = &  \mathrm{wal}_{\ell}(x) w_{k,m',\alpha}(x)\\ 
= & \mathrm{wal}_{\ell}\left( \frac{m'}{2^{a_\alpha}}\right) w_{k,m',\alpha}(x) 
= \sum_{m=0}^{2^{a_\alpha-1}} \mathrm{wal}_{\ell}\left( \frac{m}{2^{a_\alpha}}\right) w_{k,m,\alpha}(x).
\end{align*}
\end{enumerate}
\end{remark}

For any fixed $\alpha \in \{1,2\}$, the set of order $\alpha$ localized Walsh functions
\begin{equation*}
\{w_{k,m,\alpha} \ : \ k \in N_\alpha, m \in L_{k,\alpha} \}
\end{equation*}
forms an orthonormal basis of $L_2([0,1))$, since for all $k, k' \in N_\alpha$ and $m, m' \in L_{k,\alpha}$,
\begin{equation*}
\int_0^1 w_{k,m,\alpha}(x) w_{k', m',\alpha}(x) \,\mathrm{d} x = \begin{cases} 1 & \mbox{if } (k,m) = (k', m'), \\ 0 & \mbox{otherwise}. \end{cases}
\end{equation*}
Moreover, for every $a \in \mathbb{N}_0$ we have the identity
\begin{equation*}
\mathrm{span}\{ w_{k, m,\alpha}: k \in N_\alpha,  \lfloor 2^{a-1} \rfloor \le k < 2^{a}, m \in L_{k,\alpha} \} = \mathrm{span}\{ \mathrm{wal}_k: \lfloor 2^{a-1} \rfloor \le k < 2^{a}\}.
\end{equation*}
In particular, for $\alpha = 1$ we recover the Haar functions
\begin{equation*}
\{w_{\lfloor 2^{j} \rfloor, m, 1} \ : \ j \in \mathbb{N}_{-1}, m \in L_{\lfloor 2^j \rfloor, \alpha}\}.
\end{equation*}
We also note the connection between localized Walsh functions and the tiling of the Walsh-phase plane as discussed in \cite{Thiele}. 

It should also be added that there is a natural extension of order $\alpha$ localized Walsh functions for $\alpha = 3, 4, 5, \ldots$ and basis different from $2$. However, such extensions are not considered in this paper, as we do not use these functions.

\begin{definition}
For $\alpha \in \{1,2\}$, $k \in N_\alpha$ and $m \in L_{k,\alpha}$ define the cumulated functions
\begin{equation*}
W_{k, m,\alpha}(x) := \int_0^x  w_{k, m,\alpha}(y) \rd y \qquad \mbox{and}\qquad 
\widetilde{W}_{k, m,\alpha}(x) := \int_0^x  W_{k, m,\alpha }(y) \rd y.
\end{equation*}
\end{definition}

\begin{lemma}\label{le:supp:W}
For $k \in N_2\setminus N_1$ of the form $k=2^{a_1}+2^{a_2}$, where $a_1>a_2$, and for $m \in L_{k,2}$ the functions $W_{k,m,2}$ and $\widetilde{W}_{k,m,2}$ are zero in $[0, 1) \setminus (\frac{m}{2^{a_2}},\frac{m+1}{2^{a_2}})$.  
\end{lemma}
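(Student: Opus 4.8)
The key observation is that $w_{k,m,2}$ is supported on the dyadic interval $I(m,a_2)=\bigl[\tfrac{m}{2^{a_2}},\tfrac{m+1}{2^{a_2}}\bigr)$, so $W_{k,m,2}$ and $\widetilde{W}_{k,m,2}$ are automatically zero on $[0,\tfrac{m}{2^{a_2}})$ by definition of the cumulated integrals. The real content is to show they vanish again on $[\tfrac{m+1}{2^{a_2}},1)$; for this it suffices to prove that $W_{k,m,2}(x)=0$ for all $x\ge \tfrac{m+1}{2^{a_2}}$, which in turn reduces to the single integral identity
\begin{equation*}
\int_{I(m,a_2)} w_{k,m,2}(y)\rd y = 2^{a_2/2}\int_{m/2^{a_2}}^{(m+1)/2^{a_2}} \mathrm{wal}_k(y)\rd y = 0,
\end{equation*}
since once the full integral of $w_{k,m,2}$ over its support is zero, $W_{k,m,2}$ returns to $0$ past the right endpoint and hence $\widetilde{W}_{k,m,2}$ stays constant and equal to $0$ there as well.

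\emph{First} I would reduce the Walsh integral to Lemma~\ref{leWal}. Writing $k=2^{a_1}+2^{a_2}$ with $a_1>a_2$, note that $2^{a_1}\le k<2^{a_1+1}$, so Lemma~\ref{leWal} applies with $r=a_1$ and gives $\int_0^x \mathrm{wal}_k(t)\rd t = 2^{-a_1}\mathrm{wal}_k(x)\,\eta(2^{a_1}x)$. \emph{Then} I would evaluate this at the two endpoints $x=\tfrac{m}{2^{a_2}}$ and $x=\tfrac{m+1}{2^{a_2}}$ and subtract, using that $\eta$ is one-periodic: since $a_1>a_2$, the arguments $2^{a_1}\cdot\tfrac{m}{2^{a_2}}=2^{a_1-a_2}m$ and $2^{a_1-a_2}(m+1)$ are both integers, at which $\eta$ vanishes (as $\eta(0)=0$ and $\eta$ is one-periodic). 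Hence both boundary terms are $0$, so the integral of $\mathrm{wal}_k$ over $I(m,a_2)$ is zero, establishing the displayed identity.

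\emph{Finally} I would assemble the pieces. For $x\in I(m,a_2)$ the functions $W_{k,m,2}(x)$ and $\widetilde{W}_{k,m,2}(x)$ are whatever the partial integrals give, but for $x\ge\tfrac{m+1}{2^{a_2}}$ we have $W_{k,m,2}(x)=\int_0^{(m+1)/2^{a_2}} w_{k,m,2}(y)\rd y=0$ by the identity just proved, and therefore $\widetilde{W}_{k,m,2}(x)=\widetilde{W}_{k,m,2}\bigl(\tfrac{m+1}{2^{a_2}}\bigr)+\int_{(m+1)/2^{a_2}}^{x}W_{k,m,2}(y)\rd y$, where the integrand vanishes and the constant is $\widetilde{W}_{k,m,2}\bigl(\tfrac{m+1}{2^{a_2}}\bigr)=\int_0^{(m+1)/2^{a_2}}W_{k,m,2}(y)\rd y$; this last quantity is also zero because $W_{k,m,2}$ is itself supported in $(\tfrac{m}{2^{a_2}},\tfrac{m+1}{2^{a_2}})$ and integrates to the value $W_{k,m,2}$ attains at the right endpoint, namely $0$. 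Combining the left-endpoint vanishing (by definition) with the right-side vanishing (by the identity) gives the claim on all of $[0,1)\setminus(\tfrac{m}{2^{a_2}},\tfrac{m+1}{2^{a_2}})$.

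\emph{Main obstacle.} The only delicate point is the bookkeeping at the endpoints of $\eta$: one must check that both $2^{a_1-a_2}m$ and $2^{a_1-a_2}(m+1)$ land exactly at integer points where $\eta$ is zero, which hinges essentially on the strict inequality $a_1>a_2$ (guaranteeing $a_1-a_2\ge 1$ so the scaling sends dyadic rationals of denominator $2^{a_2}$ to integers). Care is also needed in handling the half-open versus closed interval conventions in Lemma~\ref{leWal} and in the definition of $I(m,a_2)$, but this does not affect the values of the continuous cumulated functions $W_{k,m,2}$ and $\widetilde{W}_{k,m,2}$.
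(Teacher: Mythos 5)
Your argument for $W_{k,m,2}$ is correct and in fact a little slicker than the paper's: you only need $\int_{I(m,a_2)}\wal_k(y)\rd y=0$, which you get cleanly from Lemma~\ref{leWal} and the periodicity of $\eta$ (the paper instead derives the explicit closed form $W_{k,m,2}=w_{2^{a_2},m,1}\cdot P_{a_1}$ with $P_{a_1}$ a triangular wave). The left-of-support vanishing of both functions is also fine.

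However, the step for $\widetilde{W}_{k,m,2}$ has a genuine gap. You need $\widetilde{W}_{k,m,2}\bigl(\tfrac{m+1}{2^{a_2}}\bigr)=\int_0^{(m+1)/2^{a_2}}W_{k,m,2}(y)\rd y=0$, and you justify this by saying that $W_{k,m,2}$ ``integrates to the value $W_{k,m,2}$ attains at the right endpoint, namely $0$.'' That is a non-sequitur: $\int_0^b W(y)\rd y$ is not $W(b)$ (you are applying the fundamental theorem of calculus to the wrong function --- it is $\int_0^b w_{k,m,2}$ that equals $W_{k,m,2}(b)$). A function supported in an interval and vanishing at both endpoints can certainly have nonzero integral, so knowing only the support of $W_{k,m,2}$ and that it vanishes at $\tfrac{m+1}{2^{a_2}}$ does not give $\int_{I(m,a_2)}W_{k,m,2}=0$. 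This cancellation is precisely the content of the second half of the paper's proof: on $I(m,a_2)$ one has $W_{k,m,2}(y)=2^{a_2/2}\,\wal_{2^{a_2}}(y)\,P_{a_1}(y)$, where $P_{a_1}\ge 0$ has equal integral $2^{-2a_1-2}$ over every dyadic interval of length $2^{-a_1}$, and $\wal_{2^{a_2}}$ equals $+1$ on the left half of $I(m,a_2)$ and $-1$ on the right half; since $a_1>a_2$, each half contains exactly $2^{a_1-a_2-1}$ such dyadic subintervals, so the two halves cancel and the integral is $0$. Some argument of this kind (or any other proof that $\int_{I(m,a_2)}W_{k,m,2}=0$) must be supplied; without it the claim about $\widetilde{W}_{k,m,2}$ on $[\tfrac{m+1}{2^{a_2}},1)$ is unproved.
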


The proof of this result is deferred to Appendix~\ref{app:B}. Next, we extend the concept of localized Walsh functions, with $\alpha \in \{1,2\}$, to probability density functions with unbounded support by composing them with the corresponding CDFs.

\begin{definition}[Localized Walsh functions with respect to a PDF]
As above, let $\varphi$ be a PDF over $\mathbb{R}$ and let $\Phi(x):=\int_{-\infty}^x \varphi(t) \rd t$ be the corresponding CDF. For $\alpha \in \{1,2\}$ the {\it (order $\alpha$) localized Walsh functions with respect to $\varphi$} are defined as $w_{k,m,\alpha}^{(\varphi)}: \RR \rightarrow \RR$, 
\begin{equation*}
w_{k,m,\alpha}^{(\varphi)}(x) := w_{k,m,\alpha}(\Phi(x)).
\end{equation*}
\end{definition}

For every $\alpha \in \{1, 2\}$, the order $\alpha$ localized Walsh functions $\{w_{k,m,\alpha}^{(\varphi)}: m \in L_{k,\alpha}, k \in N_{\alpha}\}$ with respect to $\varphi$ form an orthonormal system in $L_2(\RR,\varphi)$, because for all $k, k' \in N_\alpha$ and $m, m' \in L_{k,\alpha}$,
\begin{align*}
\int_{\mathbb{R}} w_{k,m,\alpha}^{(\varphi)}(x) w_{k',m',\alpha}^{(\varphi)}(x) \varphi(x) \rd x = & \int_{0}^1 w_{k,m,\alpha}(x) w_{k',m',\alpha}(x) \rd x= \begin{cases} 1 & \mbox{if } (k,m) = (k', m'), \\ 0 & \mbox{otherwise}. \end{cases}
\end{align*}
The corresponding cumulated functions are defined by
\begin{align}\label{cum:Wphi1}
W_{k,m,\alpha}^{(\varphi)}(x)  :=  \int_{-\infty}^x w_{k,m,\alpha}^{(\varphi)}(y) \varphi(y) \rd y = & \int_0^{\Phi(x)} w_{k,m,\alpha}(y) \rd y = W_{k,m,\alpha}(\Phi(x))
\end{align}
and
\begin{align}\label{cum:Wphi2}
\widetilde{W}_{k,m,\alpha}^{(\varphi)}(x)  :=  \int_{-\infty}^x W_{k,m,\alpha}^{(\varphi)}(y) \varphi(y) \rd y = & \int_0^{\Phi(x)} W_{k,m,\alpha}(y) \rd y = \widetilde{W}_{k,m,\alpha}(\Phi(x)).
\end{align}

From Lemma~\ref{le:supp:W} we immediately obtain:

\begin{lemma}\label{le:supp:WW}
For $k \in N_2\setminus N_1$ of the form $k=2^{a_1}+2^{a_2}$, where $a_1>a_2$, and for $m \in L_{k,2}$ the functions $W_{k,m,2}^{(\varphi)}$ and $\widetilde{W}_{k,m,2}^{(\varphi)}$ are zero in $\mathbb{R}\setminus (\Phi^{-1}(\frac{m}{2^{a_2}}),\Phi^{-1}(\frac{m+1}{2^{a_2}}))$.  
\end{lemma}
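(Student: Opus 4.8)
The plan is to reduce the statement directly to Lemma~\ref{le:supp:W} by exploiting the composition identities already recorded in \eqref{cum:Wphi1} and \eqref{cum:Wphi2}, namely
\begin{equation*}
W_{k,m,2}^{(\varphi)}(x) = W_{k,m,2}(\Phi(x)) \qquad\mbox{and}\qquad \widetilde{W}_{k,m,2}^{(\varphi)}(x) = \widetilde{W}_{k,m,2}(\Phi(x)).
\end{equation*}
These identities show that the support of each $\varphi$-version is precisely the $\Phi$-preimage of the support of its counterpart on $[0,1)$, so the whole claim becomes a matter of transporting the support interval through $\Phi^{-1}$.

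First I would fix $x \in \mathbb{R}$ and recall that $\Phi(x) \in (0,1)$, since $\Phi$ is a CDF. By Lemma~\ref{le:supp:W}, both $W_{k,m,2}$ and $\widetilde{W}_{k,m,2}$ vanish on $[0,1) \setminus (\frac{m}{2^{a_2}}, \frac{m+1}{2^{a_2}})$, and hence $W_{k,m,2}(\Phi(x)) = 0$ and $\widetilde{W}_{k,m,2}(\Phi(x)) = 0$ whenever $\Phi(x) \notin (\frac{m}{2^{a_2}}, \frac{m+1}{2^{a_2}})$. Second, I would invoke that $\Phi$ is strictly increasing on $\mathbb{R}$ — which is exactly the hypothesis that its inverse $\Phi^{-1}:(0,1) \to \mathbb{R}$ is well defined — to convert the condition on $\Phi(x)$ into a condition on $x$. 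Strict monotonicity gives $\Phi(x) \in (\frac{m}{2^{a_2}}, \frac{m+1}{2^{a_2}})$ if and only if $x \in (\Phi^{-1}(\frac{m}{2^{a_2}}), \Phi^{-1}(\frac{m+1}{2^{a_2}}))$. Combining the two steps yields that $W_{k,m,2}^{(\varphi)}$ and $\widetilde{W}_{k,m,2}^{(\varphi)}$ vanish for every $x \in \mathbb{R} \setminus (\Phi^{-1}(\frac{m}{2^{a_2}}), \Phi^{-1}(\frac{m+1}{2^{a_2}}))$, which is the assertion.

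There is essentially no real obstacle here; the result is an immediate corollary of Lemma~\ref{le:supp:W} once the composition identities are in place, and this is why the text announces it as such. The only point requiring a moment's care is the endpoint bookkeeping in the monotonicity step: for $m \in L_{k,2} = \{0,\ldots,2^{a_2}-1\}$ one has $\frac{m}{2^{a_2}} \in [0,1)$ and $\frac{m+1}{2^{a_2}} \in (0,1]$, so the boundary values $0$ and $1$ occur only at $m=0$ and $m=2^{a_2}-1$, where $\Phi^{-1}$ is not literally defined. These are handled painlessly by the conventions $\Phi^{-1}(0) = -\infty$ and $\Phi^{-1}(1) = +\infty$, so that the relevant preimage simply becomes a half-line; the vanishing conclusion is unaffected. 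Aside from this trivial convention, the argument is purely the bijective transport of the support interval under a strictly monotone $\Phi$.
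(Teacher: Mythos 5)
Your proposal is correct and follows exactly the route the paper intends: the paper gives no separate argument for this lemma, simply stating that it follows immediately from Lemma~\ref{le:supp:W} via the composition identities \eqref{cum:Wphi1} and \eqref{cum:Wphi2}, which is precisely the transport-of-support argument you spell out. Your extra remark on the endpoint conventions $\Phi^{-1}(0)=-\infty$ and $\Phi^{-1}(1)=+\infty$ is a harmless and sensible bit of bookkeeping that the paper leaves implicit.
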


\section{Walsh series expansions of functions over $\mathbb{R}^s$}\label{sec:serexp}

This section investigates series expansions of functions $F \in L_2(\mathbb{R}^s,\varphi)$ with the application of both Walsh- and localized Walsh bases. The primary objective is to achieve precise estimation of Walsh coefficients under specific conditions, details of which will be provided in Section~\ref{subsec:Walcoeff}. The discussion commences with expansions utilizing localized Walsh functions.

\subsection{Bounds on localized Walsh coefficients}

For a vector $\bsk=(k_1,\ldots,k_s) \in N_2^s$ put $L_{\bsk,2} :=L_{k_1,2} \times \ldots \times L_{k_s,2}$. For $\bsm =(m_1,\ldots,m_s) \in L_{\bsk,2}$ and $\bsx=(x_1,\ldots,x_s)$ put $w_{\bsk,\bsm,2}(\bsx):= \prod_{j=1}^s w_{k_j,m_j,2}(x_j)$ and similarly for $w_{\bsk,\bsm,2}^{(\varphi)}$. For $F \in L_2(\mathbb{R}^s,\varphi)$ we write
\begin{equation*}
F(\bsx) \sim \sum_{\bsk \in N_2^s} \sum_{\bsm \in L_{\bsk,2}} \widehat{F}^{(\varphi)}(\bsk, \bsm) w^{(\varphi)}_{\bsk,\bsm,2}(\bsx),
\end{equation*}
with localized Walsh function coefficients
\begin{equation*}
\widehat{F}^{(\varphi)}(\bsk,\bsm) := \int_{\mathbb{R}^s} F(\bsx) w^{(\varphi)}_{\bsk,\bsm,2}(\bsx) \varphi(\bsx) \rd \bsx.
\end{equation*}
We assume that the weight functions $\varphi, \psi, \omega$ in \eqref{eq_uvnorm} satisfy the following assumption.

\begin{assumption}\label{assumptionA}
Let $\varphi$ be a probability density function on $\mathbb{R}$ used in \eqref{def:int}, and let $\psi, \omega$ be the weight functions in the norm \eqref{eq_uvnorm}, \eqref{Fnorm:infty}. There exist reals $C>0$ and $\beta \in (0,1]$ such that for every $k \in N_2 \setminus N_1$, $k=2^{a_1}+2^{a_2}$, $a_1 > a_2 \ge 0$, and $m \in L_{k,2}$ we have
$$\sup_{x \in \RR} \frac{|\widetilde{W}_{k,m,2}^{(\varphi)}(x)|}{\varphi(x) \psi(x)} \le C \, \frac{ 2^{a_2/2} }{2^{a_1+ \beta a_2}}$$
and
$$\sup_{x \in \RR} \frac{|\widetilde{W}_{k,m,2}^{(\varphi)}(x) \varphi'(x)|}{\varphi^2(x) \omega(x)} \le C\, \frac{ 2^{a_2/2} }{2^{a_1+ \beta a_2}},$$
where $\varphi'$ is the first derivative of $\varphi$ and $\widetilde{W}^{(\varphi)}_{k,m,2}$ is given in \eqref{cum:Wphi2}.
\end{assumption}

Examples of weight functions that satisfy Assumption~\ref{assumptionA} will be discussed in Section~\ref{sec:ex:ass:A}. Notice that the factor $2^{a_2/2}$ arises from the scaling of the localized Walsh functions $w_{k,m,2}$. This factor will cancel out when we switch to Walsh functions in Section~\ref{subsec:Walcoeff}. When we assume in this work that Assumption~\ref{assumptionA} holds, we tacitly mean that this holds for some constant $C>0$ and parameter $\beta \in (0,1]$.

We note that introducing localized Walsh functions—rather than working directly with the standard Walsh expansions—seems indispensable, since we cannot verify an analogue of Assumption~\ref{assumptionA} in a purely global setting for classical Walsh functions.  By localizing the Walsh functions verifying this becomes feasible, see Section~\ref{sec:ex:ass:A} and, in particular, Remark~\ref{rem_w_local}.

The next lemma provides a bound on a certain sum of absolute values of localized Walsh function coefficients of a function $F \in L_2(\mathbb{R}^s,\varphi)$ under the validity of Assumption~\ref{assumptionA}. Here it is essential that the bound is on the \textit{sum over all shifted localized Walsh functions,} since we need this to obtain a bound on the classical Walsh coefficients in Lemma~\ref{est:wal:F}.

\begin{lemma}\label{le:hatF2}
For $\emptyset \not= \uu \subseteq [s]$ let $(\bsk_\uu,\bszero) :=(y_1,y_2,\ldots,y_s) \in N_2^s$, where $y_j=k_j \in N_2 \setminus \{0\}$ for $j \in \uu$ and $y_j=0$ for $j \in [s] \setminus \uu$. Further let $\vv \subseteq \uu$ be such that for $j \in \vv$ we have $k_j =2^{a_{j,1}}+2^{a_{j,2}} \in N_2 \setminus N_1$, $a_{j,1} > a_{j,2} \ge 0$ and for $j \in \uu \setminus \vv$ we have $k_j =2^{a_{j,1}}\in N_1 \setminus N_0$, $a_{j,1}\ge 0$. Then, under Assumption~\ref{assumptionA}, and with $C_1:=\max(C,\tfrac{1}{2})$, we have 
\begin{equation*}
\sum_{\bsm_{\vv} \in L_{\bsk_{\vv}, 2}} | \widehat{F}^{(\varphi)}((\bsk_\uu,\bszero),(\bsm_\vv,\bszero))| \le C_1^{|\uu|} \, \|F\|_{\uu, \vv}  \prod_{j \in \vv} \frac{2^{a_{j,2}/2}}{2^{a_{j,1}+ \beta a_{j,2}}}  \prod_{j \in \uu \setminus \vv} \frac{1}{2^{a_{j,1}}} .
\end{equation*}
\end{lemma}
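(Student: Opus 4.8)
\noindent\emph{Sketch of the argument.} The plan is to reduce the $s$-dimensional coefficient to a $|\uu|$-dimensional integral and then to strip off each localized Walsh factor by integration by parts, coordinate by coordinate, transferring derivatives onto $F$ until the weighted derivative integrals defining $\|F\|_{\uu,\vv}$ in \eqref{eq_uvnorm} emerge. Since $w^{(\varphi)}_{0,0,2}\equiv 1$ on the coordinates outside $\uu$, I would first integrate these out by Fubini to obtain
$$\widehat{F}^{(\varphi)}((\bsk_\uu,\bszero),(\bsm_\vv,\bszero)) = \int_{\RR^{|\uu|}} G(\bsx_\uu)\prod_{j\in\uu} w^{(\varphi)}_{k_j,m_j,2}(x_j)\,\varphi(x_j)\rd\bsx_\uu,$$
where $G(\bsx_\uu):=\int_{\RR^{s-|\uu|}}F(\bsx)\prod_{j\in[s]\setminus\uu}\varphi(x_j)\rd\bsx_{[s]\setminus\uu}$. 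Differentiating under the integral sign then identifies the mixed partial derivatives of $G$ appearing below with the inner integrals of \eqref{eq_uvnorm}.

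Next I would integrate by parts in each variable of $\uu$, using the primitives from \eqref{cum:Wphi1}, \eqref{cum:Wphi2}, namely $w^{(\varphi)}_{k,m,2}\varphi=(W^{(\varphi)}_{k,m,2})'$ and $W^{(\varphi)}_{k,m,2}\varphi=(\widetilde{W}^{(\varphi)}_{k,m,2})'$. For $j\in\uu\setminus\vv$ (where $k_j=2^{a_{j,1}}$ and $w^{(\varphi)}_{k_j,0,2}=\wal_{k_j}\circ\Phi$), one integration by parts replaces the factor by $W^{(\varphi)}_{k_j,0,2}(x_j)$ and moves one $\partial_{x_j}$ onto $G$; Lemma~\ref{leWal} yields $|W^{(\varphi)}_{k_j,0,2}(x_j)|=\big|\int_0^{\Phi(x_j)}\wal_{k_j}\big|\le 2^{-a_{j,1}-1}$. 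For $j\in\vv$ I would integrate by parts twice: first from $w^{(\varphi)}_{k_j,m_j,2}\varphi$ to $W^{(\varphi)}_{k_j,m_j,2}$, then, writing $W^{(\varphi)}_{k_j,m_j,2}=(\widetilde{W}^{(\varphi)}_{k_j,m_j,2})'/\varphi$, once more. Since $\partial_{x_j}(\partial_{x_j}G/\varphi)=\partial^2_{x_j}G/\varphi-\partial_{x_j}G\,\varphi'/\varphi^2$, this produces two terms carrying $\widetilde{W}^{(\varphi)}_{k_j,m_j,2}(x_j)$: one with a second $x_j$-derivative of $G$ and weight $1/\varphi$, one with a first $x_j$-derivative and weight $\varphi'/\varphi^2$. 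Making this binary choice independently over all $j\in\vv$ generates exactly the sum over $\ww\subseteq\vv$ in \eqref{eq_uvnorm}, $\ww$ being the coordinates carrying the second derivative. All boundary terms vanish: for $j\in\vv$ because $W^{(\varphi)}_{k_j,m_j,2}$ and $\widetilde{W}^{(\varphi)}_{k_j,m_j,2}$ are supported in a bounded interval (Lemma~\ref{le:supp:WW}), and for $j\in\uu\setminus\vv$ because $W^{(\varphi)}_{k_j,0,2}\to0$ at $\pm\infty$ (as $\int_0^1\wal_{k_j}=0$ for $k_j\ge1$).

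Finally I would take absolute values and sum over $\bsm_\vv\in L_{\bsk_\vv,2}$. The crux — and the reason the statement bounds the full shift-sum — is the disjoint-support property: for each fixed $x_j$ exactly one $m_j\in L_{k_j,2}$ gives $\widetilde{W}^{(\varphi)}_{k_j,m_j,2}(x_j)\neq0$, so after interchanging the nonnegative sum and integral (Tonelli) the coordinatewise sum collapses to a single term, to which Assumption~\ref{assumptionA} applies pointwise: $\sum_{m_j}|\widetilde{W}^{(\varphi)}_{k_j,m_j,2}(x_j)|/\varphi(x_j)\le C\,2^{a_{j,2}/2}2^{-a_{j,1}-\beta a_{j,2}}\psi(x_j)$ for $j\in\ww$, and similarly with $\omega(x_j)$ and the extra $\varphi'/\varphi^2$ for $j\in\vv\setminus\ww$. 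The leftover $|\uu|$-dimensional integral of the $\ww$-mixed derivative of $G$ against $\prod_{j\in\ww}\psi\prod_{j\in\vv\setminus\ww}\omega$ is precisely the $\ww$-summand of $\|F\|_{\uu,\vv}$. Collecting the scalar prefactors gives $C^{|\vv|}(1/2)^{|\uu\setminus\vv|}\le C_1^{|\uu|}$ times $\prod_{j\in\vv}2^{a_{j,2}/2}2^{-a_{j,1}-\beta a_{j,2}}\prod_{j\in\uu\setminus\vv}2^{-a_{j,1}}$, which is the claim.

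The routine bookkeeping (Fubini, differentiation under the integral, the iterated integration by parts) is justified by $F$ and the mixed derivatives in \eqref{eq_uvnorm} being smooth and $\varphi$-integrable whenever the norm is finite, and is most cleanly handled by first establishing the identity for a dense class of smooth $F$. The one genuinely structural step, rather than bookkeeping, is the simultaneous use of the disjoint supports (Lemma~\ref{le:supp:WW}) and Assumption~\ref{assumptionA}: it converts a sum over exponentially many shifts $\bsm_\vv$ into a single clean per-coordinate bound, thereby producing $\|F\|_{\uu,\vv}$ rather than a much larger quantity, and I expect verifying its validity in each variable to be the main point requiring care.
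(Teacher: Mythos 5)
Your proposal is correct and follows essentially the same route as the paper's proof: one integration by parts per coordinate in $\uu$, a second one for coordinates in $\vv$ whose Leibniz expansion generates the sum over $\ww\subseteq\vv$, then the disjoint supports from Lemma~\ref{le:supp:WW} collapse the sum over $\bsm_\vv$ so that Assumption~\ref{assumptionA} and Lemma~\ref{leWal} apply per coordinate. The only cosmetic difference is that you integrate out the coordinates in $[s]\setminus\uu$ first (defining $G$), whereas the paper keeps the full $s$-dimensional integral and absorbs those coordinates into the inner integral of $\|F\|_{\uu,\vv}$ at the end.
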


\begin{proof}
Since $L_{0,2}=\{0\}$, every element from the corresponding set $L_{(\bsk_\uu,\bszero),2}$ has the form $(m_\uu,\bszero)=(z_1,\ldots,z_s)$ where $z_j=m_j \in L_{k_j,2}$ for $j \in \uu$ and $z_j=0$ for $j \in [s]\setminus \uu$. Using integration by parts and introducing the cumulated localized Walsh functions as described in \eqref{cum:Wphi1}, we have
\begin{align*}
\widehat{F}^{(\varphi)}((\bsk_\uu,\bszero),(\bsm_\uu,\bszero)) 
= & \int_{\mathbb{R}^s} F(\bsx) w^{(\varphi)}_{(\bsk_\uu,\bszero),(\bsm_\uu,\bszero),2}(\bsx) \varphi(\bsx) \rd \bsx\\
= & \int_{\mathbb{R}^s} F(\bsx) \prod_{j \in \uu} w^{(\varphi)}_{k_j,m_j,2}(x_j) \prod_{j \in \uu}\varphi(x_j) \prod_{j \in [s] \setminus \uu}\varphi(x_j) \rd \bsx\\
= & (-1)^{|\uu|} \int_{\mathbb{R}^s} \frac{\partial^{|\uu|} F}{\partial \bsx_\uu}(\bsx) \prod_{j \in \uu} W_{k_j,m_j,2}^{(\varphi)}(x_j) \prod_{j \in [s]\setminus \uu} \varphi(x_j) \rd \bsx.
\end{align*}
By definition, $\vv$ consists of all $j \in \uu$ for which $k_j \in N_2 \setminus N_1$. For $j \in \uu \setminus \vv$ we have $k_j \in N_1 \setminus N_0$. Note that for $j \in \uu \setminus \vv$ we have $L_{k_j,2}=\{0\}$ and hence the corresponding $m_j$ equals 0. Using integration by parts a second time for all coordinates $j \in \vv$ and introducing the cumulated localized Walsh functions from \eqref{cum:Wphi2}, we obtain
\begin{eqnarray*}
\lefteqn{\widehat{F}^{(\varphi)}((\bsk_\uu,\bszero),(\bsm_\vv,\bszero))}  \\
& = & (-1)^{|\uu|} \int_{\mathbb{R}^s} \frac{\partial^{|\uu|} F}{\partial \bsx_\uu}(\bsx)  \prod_{j \in \vv} \frac{W_{k_j,m_j,2}^{(\varphi)}(x_j) \varphi(x_j)}{\varphi(x_j)} \prod_{j \in \uu \setminus \vv} W^{(\varphi)}_{k_j,0, 2}(x_j) \prod_{j \in [s]\setminus \uu} \varphi(x_j) \rd \bsx \\
& = & \sum_{\ww \subseteq \vv}  (-1)^{|\uu| + |\ww|}  \int_{\mathbb{R}^s} \frac{\partial^{|\uu| + |\ww|} F}{\partial \bsx_\ww \partial \bsx_{\uu}}(\bsx) \prod_{j \in \ww} \frac{1}{\varphi(x_j)} \prod_{j \in \vv \setminus \ww} \frac{\varphi'(x_j)}{\varphi^2(x_j)} \prod_{j \in \vv} \widetilde{W}_{k_j,m_j,2}^{(\varphi)}(x_j) \\
& & \qquad \qquad \qquad \qquad \times \prod_{j \in \uu \setminus \vv} W^{(\varphi)}_{k_j, 0, 2}(x_j) \prod_{j \in [s]\setminus \uu} \varphi(x_j)  \rd \bsx\\
& = &\sum_{\ww \subseteq \vv}  (-1)^{|\uu| + |\ww|}  \int_{\mathbb{R}^s} \frac{\partial^{|\uu| + |\ww|} F}{\partial \bsx_\ww \partial \bsx_{\uu}}(\bsx) \prod_{j \in \ww} \frac{\widetilde{W}_{k_j,m_j,2}^{(\varphi)}(x_j)}{\varphi(x_j)} \prod_{j \in \vv \setminus \ww} \frac{\widetilde{W}_{k_j,m_j,2}^{(\varphi)}(x_j) \varphi'(x_j)}{\varphi^2(x_j)}  \\
& & \qquad \qquad \qquad \qquad \times \prod_{j \in \uu \setminus \vv} W^{(\varphi)}_{k_j, 0, 2}(x_j) \prod_{j \in [s]\setminus \uu} \varphi(x_j)  \rd \bsx.
\end{eqnarray*}
Taking the absolute value and applying the triangle inequality, we thus have
\begin{eqnarray*}
\lefteqn{|\widehat{F}^{(\varphi)}((\bsk_\uu,\bszero),(\bsm_\vv,\bszero))| }  \\
& \le & \sum_{\ww \subseteq \vv}    \int_{\mathbb{R}^{\uu}} \left| \int_{\mathbb{R}^{s-|\uu|}} \frac{\partial^{|\uu| + |\ww|} F}{\partial \bsx_\ww \partial \bsx_{\uu}}(\bsx)\prod_{j \in [s]\setminus \uu} \varphi(x_j) \rd \bsx_{[s] \setminus \uu} \right| \\
& &\qquad \times  \prod_{j \in \ww} \frac{|\widetilde{W}_{k_j,m_j,2}^{(\varphi)}(x_j)|}{\varphi(x_j)} \prod_{j \in \vv \setminus \ww} \frac{|\widetilde{W}_{k_j,m_j,2}^{(\varphi)}(x_j) \varphi'(x_j)|}{\varphi^2(x_j)}  \prod_{j \in \uu \setminus \vv} |W^{(\varphi)}_{k_j, 0, 2}(x_j)|   \rd \bsx_{\uu}.
\end{eqnarray*}
Now we introduce the weight functions $\psi,\omega:\RR \rightarrow (0,\infty)$ in the following way: we have
\begin{eqnarray*}
\lefteqn{| \widehat{F}^{(\varphi)}((\bsk_\uu,\bszero),(\bsm_\vv,\bszero)) | }  \\
& \le &\sum_{\ww \subseteq \vv}    \int_{\mathbb{R}^{\uu}} \left| \int_{\mathbb{R}^{s-|\uu|}} \frac{\partial^{|\uu| + |\ww|} F}{\partial \bsx_\ww \partial \bsx_{\uu}}(\bsx)\prod_{j \in [s]\setminus \uu} \varphi(x_j) \rd \bsx_{[s] \setminus \uu} \right|  \prod_{j \in \ww} \psi(x_j) \prod_{j \in \vv \setminus \ww} \omega(x_j)     \\
& & \qquad \qquad \times \prod_{j \in \ww} \frac{|\widetilde{W}_{k_j,m_j,2}^{(\varphi)}(x_j)|}{\varphi(x_j) \psi(x_j)} \prod_{j \in \vv \setminus \ww} \frac{|\widetilde{W}_{k_j,m_j,2}^{(\varphi)}(x_j) \varphi'(x_j)|}{\varphi^2(x_j) \omega(x_j)}  \prod_{j \in \uu \setminus \vv} |W^{(\varphi)}_{k_j, 0, 2}(x_j)| \rd \bsx_{\uu}.
\end{eqnarray*}
Taking the sum over all $\bsm_\vv \in L_{\bsk_\vv,2}$ we obtain
\begin{eqnarray*}
\lefteqn{\sum_{\bsm_\vv \in L_{\bsk_\vv,2}} | \widehat{F}^{(\varphi)}((\bsk_\uu,\bszero),(\bsm_\vv,\bszero))| } \nonumber \\
& \le & \sum_{\ww \subseteq \vv}    \int_{\mathbb{R}^{\uu}} \left| \int_{\mathbb{R}^{s-|\uu|}} \frac{\partial^{|\uu| + |\ww|} F}{\partial \bsx_\ww \partial \bsx_{\uu}}(\bsx)\prod_{j \in [s]\setminus \uu} \varphi(x_j) \rd \bsx_{[s] \setminus \uu} \right|  \prod_{j \in \ww} \psi(x_j) \prod_{j \in \vv \setminus \ww} \omega(x_j)   \nonumber \\
& & \quad \times \sum_{\bsm_\vv \in L_{\bsk_\vv,2}} \left(\prod_{j \in \ww} \frac{|\widetilde{W}_{k_j,m_j,2}^{(\varphi)}(x_j)|}{\varphi(x_j) \psi(x_j)} \prod_{j \in \vv \setminus \ww} \frac{2 |\widetilde{W}_{k_j,m_j,2}^{(\varphi)}(x_j) \varphi'(x_j)|}{\varphi^2(x_j) \omega(x_j)}  \prod_{j \in \uu \setminus \vv} |W^{(\varphi)}_{k_j, 0, 2}(x_j)|\right) \rd \bsx_{\uu}.
\end{eqnarray*}
Remember that $j \in \uu \setminus \vv$ means that $k_j \in N_1\setminus N_0$. In this case we have $k_j=2^{a_{j,1}}$ and $w_{k_j,0,2}=\wal_{2^{a_{j,1}}}$. Thus,
$$\sup_{x \in \RR}|W^{(\varphi)}_{k_j, 0, 2}(x)| =\sup_{x \in \RR}\left|\int_0^{\Phi(x)}\wal_{2^{a_{j,1}}}(y) \rd y \right|=\sup_{t \in [0,1]}\left|\int_0^t \wal_{2^{a_{j,1}}}(y) \rd y \right|=\frac{1}{2^{a_{j,1}+1}},$$ where for the last identity we used Lemma~\ref{leWal}. Applying this, Lemma~\ref{le:supp:WW}, and Assumption~\ref{assumptionA}, we obtain
\begin{eqnarray*}
\lefteqn{\sum_{\bsm_{\vv} \in L_{\bsk_{\vv}, 2}} | \widehat{F}^{(\varphi)}((\bsk_\uu,\bszero),(\bsm_\vv,\bszero))|}  \nonumber  \\
& \le &  \sum_{\ww \subseteq \vv} \prod_{j \in \ww} \frac{C\, 2^{a_{j,2}/2}}{2^{a_{j,1}+ \beta a_{j,2}}} \prod_{j \in \vv \setminus \ww}\frac{C\, 2^{a_{j,2}/2}}{2^{a_{j,1}+ \beta a_{j,2}}} \prod_{j \in \uu \setminus \vv} \frac{1}{2^{a_{j,1}+1}} \nonumber \\
& & \times \int_{\mathbb{R}^{\uu}} \left| \int_{\mathbb{R}^{s-|\uu|}} \frac{\partial^{|\uu| + |\ww|} F}{\partial \bsx_\ww \partial \bsx_{\uu}}(\bsx)\prod_{j \in [s]\setminus \uu} \varphi(x_j) \rd \bsx_{[s] \setminus \uu} \right|   \prod_{j \in \ww} \psi(x_j) \prod_{j \in \vv \setminus \ww} \omega(x_j)  \rd \bsx_{\uu} \nonumber  \\ 
& \le & \max(C,\tfrac{1}{2})^{|\uu|}\, \|F\|_{\uu, \vv} \prod_{j \in \vv} \frac{2^{a_{j,2}/2}}{2^{a_{j,1}+ \beta a_{j,2}}}  \prod_{j \in \uu \setminus \vv} \frac{1}{2^{a_{j,1}}},
\end{eqnarray*}
as claimed.
\end{proof}

\subsection{Bounds on Walsh coefficients}\label{subsec:Walcoeff}

For the error analysis we use the classical Walsh-expansion of functions $F \in L_2(\RR^s,\varphi)$, $$F(\bsx) \sim \sum_{\bsk \in \NN_0^s} \widetilde{F}^{(\varphi)}(\bsk) \wal_{\bsk}^{(\varphi)}(\bsx),\qquad \mbox{for } \bsx \in \RR^s,$$ with Walsh coefficients $$\widetilde{F}^{(\varphi)}(\bsk):=\int_{\RR^s}F(\bsx) \wal_{\bsk}^{(\varphi)}(\bsx) \varphi(\bsx)\rd \bsx,$$ where $\wal_{\bsk}^{(\varphi)}(\bsx):=\wal_{\bsk}(\Phi(\bsx))$. Now we show how to use Lemma~\ref{le:hatF2} in order to estimate Walsh coefficients. 

\begin{lemma}\label{est:wal:F}
For $\vv \subseteq \uu \subseteq [s]$ let $(\bsk_\vv+\bsell_\vv,\bsk_{\uu \setminus \vv},\bszero):=(y_1,\ldots,y_s)\in \NN_0^s$, where 
\begin{itemize}
    \item for $j \in \vv$: $y_j=k_j+\ell_j$ with $k_j=2^{a_{j,1}}+2^{a_{j,2}} \in N_2 \setminus N_1$ and $\ell_j \in \{0,1,\ldots,2^{a_{j,2}}-1\}$,
    \item for $j \in \uu \setminus \vv$: $y_j=k_j$ with $k_j = 2^{a_{j,1}} \in N_1\setminus N_0$, and 
    \item for $j \in [s]\setminus \uu$: $y_j=0$.
\end{itemize}
Then, under Assumption~\ref{assumptionA},  we have
\begin{equation*}
|\widetilde{F}^{(\varphi)}((\bsk_\vv+\bsell_\vv,\bsk_{\uu \setminus \vv},\bszero))| \le  C_1^{|\uu|} \, \|F\|_{\uu, \vv} \, \prod_{j \in \vv} \frac{1}{2^{a_{j,1} + \beta a_{j,2}}} \prod_{j \in \uu \setminus \vv} \frac{1}{2^{a_{j,1}}}.
\end{equation*}
\end{lemma}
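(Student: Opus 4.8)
The plan is to reduce the classical Walsh coefficient $\widetilde{F}^{(\varphi)}(\bsk)$ to the localized Walsh coefficients $\widehat{F}^{(\varphi)}$ that are already controlled by Lemma~\ref{le:hatF2}, exploiting the linear relation \eqref{Walsh-holWalsh}. The key observation is that for each coordinate $j \in \vv$ the classical Walsh function $\wal_{k_j+\ell_j}$ is, up to the scalar $2^{-a_{j,2}/2}$, a signed average of the $2^{a_{j,2}}$ shifted localized functions $w_{k_j,m_j,2}$, $m_j \in L_{k_j,2}$; for $j \in \uu \setminus \vv$ we have $k_j \in N_1 \setminus N_0$ and hence $\wal_{k_j} = w_{k_j,0,2}$ by Definition~\ref{def:how}; and for $j \in [s]\setminus\uu$ the index is $0$ with $\wal_0 = w_{0,0,2} = 1$.

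Concretely, I would compose \eqref{Walsh-holWalsh} with $\Phi$ to get, for $j \in \vv$,
$$\wal_{k_j+\ell_j}^{(\varphi)}(x_j) = \frac{1}{2^{a_{j,2}/2}} \sum_{m_j=0}^{2^{a_{j,2}}-1} \wal_{\ell_j}\!\left(\frac{m_j}{2^{a_{j,2}}}\right) w_{k_j,m_j,2}^{(\varphi)}(x_j),$$
and substitute the resulting product expansion of $\wal_{\bsk}^{(\varphi)}$ into the defining integral for $\widetilde{F}^{(\varphi)}(\bsk)$. Expanding the (finite) product over $j \in \vv$ and interchanging summation with integration identifies each integral as $\widehat{F}^{(\varphi)}((\bsk_\uu,\bszero),(\bsm_\vv,\bszero))$, yielding the exact identity
$$\widetilde{F}^{(\varphi)}(\bsk) = \prod_{j \in \vv} \frac{1}{2^{a_{j,2}/2}} \sum_{\bsm_\vv \in L_{\bsk_\vv,2}} \left(\prod_{j \in \vv} \wal_{\ell_j}\!\left(\frac{m_j}{2^{a_{j,2}}}\right)\right) \widehat{F}^{(\varphi)}((\bsk_\uu,\bszero),(\bsm_\vv,\bszero)).$$

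Finally I would take absolute values, use that each value $\wal_{\ell_j}(\cdot) \in \{-1,1\}$ to discard the signs, and apply Lemma~\ref{le:hatF2} to the remaining sum $\sum_{\bsm_\vv} |\widehat{F}^{(\varphi)}(\cdots)|$. This gives
$$|\widetilde{F}^{(\varphi)}(\bsk)| \le \prod_{j \in \vv} \frac{1}{2^{a_{j,2}/2}} \cdot C_1^{|\uu|} \|F\|_{\uu,\vv} \prod_{j \in \vv} \frac{2^{a_{j,2}/2}}{2^{a_{j,1}+\beta a_{j,2}}} \prod_{j \in \uu \setminus \vv} \frac{1}{2^{a_{j,1}}},$$
whereupon the prefactors $2^{-a_{j,2}/2}$ cancel exactly against the $2^{a_{j,2}/2}$ supplied by Lemma~\ref{le:hatF2}, producing the claimed bound. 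I do not expect a genuine obstacle: essentially all of the analytic difficulty has been front-loaded into Lemma~\ref{le:hatF2}. The one structural point worth emphasizing is \emph{why} the bound in Lemma~\ref{le:hatF2} is taken over the entire shift family $\bsm_\vv$: a single classical Walsh coefficient is a linear combination of localized coefficients ranging over all shifts, so only control of the full sum — not of the individual terms — can be used, and the normalization $2^{a_{j,2}/2}$ of the localized functions is precisely what is engineered to cancel.
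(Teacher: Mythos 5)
Your proposal is correct and follows essentially the same route as the paper: expand the classical Walsh function via the relation \eqref{Walsh-holWalsh} into the shifted localized Walsh functions (using that $\wal_{\ell_j}$ is constant on the support of $w_{k_j,m_j,2}$), identify the resulting integrals as the localized coefficients $\widehat{F}^{(\varphi)}$, and then invoke Lemma~\ref{le:hatF2} so that the normalization factors $2^{\pm a_{j,2}/2}$ cancel. Your closing remark about why Lemma~\ref{le:hatF2} must control the sum over all shifts, rather than individual terms, matches exactly the motivation the paper gives before that lemma.
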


\begin{proof}
We have 
\begin{eqnarray*}
\widetilde{F}^{(\varphi)}((\bsk_\vv+\bsell_\vv,\bsk_{\uu \setminus \vv},\bszero))  & = & \int_{\RR^s} F(\bsx) \wal_{(\bsk_\vv+\bsell_\vv,\bsk_{\uu \setminus \vv},\bszero)}^{(\varphi)}(\bsx) \varphi(\bsx) \rd \bsx  \\
& = & \int_{\RR^s} F(\bsx) \wal_{\bsk_\vv+\bsell_\vv}^{(\varphi)}(\bsx_\vv) \wal_{\bsk_{\uu \setminus \vv}}^{(\varphi)}(\bsx_{\uu \setminus \vv}) \varphi(\bsx) \rd \bsx. 
\end{eqnarray*}
According to \eqref{Walsh-holWalsh} we obtain $$\wal_{\bsk_\vv+\bsell_\vv}^{(\varphi)}(\bsx_\vv) = \frac{1}{2^{\sum_{j \in \vv} a_{j,2}/2}} \wal_{\bsell_{\vv}}^{(\varphi)}(\bsx_\vv) \sum_{\bsm_{\vv} \in L_{\bsk_{\vv}, 2}} w_{\bsk_\vv,\bsm_\vv,2}^{(\varphi)}(\bsx_\vv)$$
and, by definition, $\wal_{\bsk_{\uu \setminus \vv}}^{(\varphi)}(\bsx_{\uu \setminus \vv})=w^{(\varphi)}_{\bsk_{\uu \setminus \vv},0,2}(\bsx_{\uu \setminus \vv})$. 
This leads to
\begin{eqnarray*}
\lefteqn{\widetilde{F}^{(\varphi)}((\bsk_\vv+\bsell_\vv,\bsk_{\uu \setminus \vv},\bszero))}\\
& = & \frac{1}{2^{\sum_{j \in \vv} a_{j,2}/2}}  \sum_{\bsm_{\vv} \in L_{\bsk_{\vv}, 2}} \int_{\RR^s} F(\bsx) \wal_{\bsell_{\vv}}^{(\varphi)}(\bsx_\vv) w_{\bsk_\vv,\bsm_\vv,2}^{(\varphi)}(\bsx_\vv) w^{(\varphi)}_{\bsk_{\uu \setminus \vv},0,2}(\bsx_{\uu \setminus \vv}) \varphi(\bsx) \rd \bsx.     
\end{eqnarray*}
For $j \in \vv$ consider
$$\wal_{\ell_j}^{(\varphi)}(x_j) w_{k_j,m_j,2}^{(\varphi)}(x_j) = \wal_{\ell_j}(\Phi(x_j)) w_{k_j,m_j,2}(\Phi(x_j)) = \wal_{\ell_j}(y_j) w_{k_j,m_j,2}(y_j),$$ where we use the shorthand notation $y_j:=\Phi(x_j)$. The support of $w_{k_j,m_j,2}$ is the interval $I(m_j,a_{j,2})=[\frac{m_j}{2^{a_{j,2}}},\frac{m_j+1}{2^{a_{j,2}}})$. Since $\ell_j \in \{0,1,\ldots,2^{a_{j,2}}-1\}$ it follows that $\wal_{\ell_j}(y_j)$ is constant on this interval with value $\wal_{\ell_j}(m_j/2^{a_{j,2}})$. Hence $$\wal_{\ell_j}^{(\varphi)}(x_j) w_{k_j,m_j,2}^{(\varphi)}(x_j) = \wal_{\ell_j}\left(\frac{m_j}{2^{a_{j,2}}}\right)  w_{k_j,m_j,2}^{(\varphi)}(x_j).$$ This gives
\begin{eqnarray*}
\lefteqn{\widetilde{F}^{(\varphi)}((\bsk_\vv+\bsell_\vv,\bsk_{\uu \setminus \vv},\bszero))}\\
& = & \frac{1}{2^{\sum_{j \in \vv} a_{j,2}/2}}  \sum_{\bsm_{\vv} \in L_{\bsk_{\vv}, 2}} \left(\prod_{j \in \vv}\wal_{\ell_j}\left(\frac{m_j}{2^{a_{j,2}}}\right)\right) \int_{\RR^s} F(\bsx) w_{\bsk_\vv,\bsm_\vv,2}^{(\varphi)}(\bsx_\vv) w^{(\varphi)}_{\bsk_{\uu \setminus \vv},0,2}(\bsx_{\uu \setminus \vv}) \varphi(\bsx) \rd \bsx \\
& = & \frac{1}{2^{\sum_{j \in \vv} a_{j,2}/2}}  \sum_{\bsm_{\vv} \in L_{\bsk_{\vv}, 2}} \left(\prod_{j \in \vv}\wal_{\ell_j}\left(\frac{m_j}{2^{a_{j,2}}}\right)\right) \widehat{F}^{(\varphi)}((\bsk_\uu,\bszero),(\bsm_\vv,\bszero)).     
\end{eqnarray*}
Taking the absolute value, applying the triangle inequality and using Lemma~\ref{le:hatF2} we obtain 
\begin{eqnarray*}
|\widetilde{F}^{(\varphi)}((\bsk_\vv+\bsell_\vv,\bsk_{\uu \setminus \vv},\bszero))| & \le & \frac{1}{2^{\sum_{j \in \vv} a_{j,2}/2}}  \sum_{\bsm_{\vv} \in L_{\bsk_{\vv}, 2}} |\widehat{F}^{(\varphi)}((\bsk_\uu,\bszero),(\bsm_\vv,\bszero))|\\
& \le  & C_1^{|\uu|}\, \|F\|_{\uu, \vv} \prod_{j \in \vv} \frac{1}{2^{a_{j,1}+ \beta a_{j,2}}} \prod_{j \in \uu \setminus \vv} \frac{1}{2^{a_{j,1}}},  
\end{eqnarray*}
as claimed.
\end{proof}

\section{Numerical integration using polynomial lattice rules}\label{sec:intPLPS}

In the present section, we return to the main aim of this work, the numerical $\varphi$-weighted integration of functions $F$ over $\mathbb{R}^s$ by means of quasi-Monte Carlo rules of the form \eqref{QMC:rule}. Throughout this section we assume the validity of Assumption~\ref{assumptionA}.

\subsection{Error analysis for arbitrary underlying nodes}

Let $\cP=\{\bsx_0,\bsx_1,\ldots,\bsx_{2^m-1}\}$ be a $2^m$-element point set in $(0,1)^s$ and approximate the $\varphi$-weighted integral $\int_{\RR^s}F(\bsx) \varphi(\bsx) \rd \bsx$ with the QMC-rule \eqref{QMC:rule} that is based on this point set. For the error of this approximation we have
\begin{align*}
{\rm err}(F;\varphi;\cP) = & \frac{1}{2^m} \sum_{n=0}^{2^m-1} F(\Phi^{-1}(\bsx_n))-\int_{\RR^s}F(\bsx) \varphi(\bsx) \rd \bsx    \\
= & \sum_{\bsk \in \NN_0^s\setminus\{\bszero\}} \widetilde{F}^{(\varphi)}(\bsk) \frac{1}{2^m} \sum_{n=0}^{2^m-1} \wal_{\bsk}^{(\varphi)}(\Phi^{-1}(\bsx_n))\\
= & \sum_{\emptyset \not= \uu \subseteq [s]} \sum_{\bsk_\uu \in \NN^{|\uu|}} \widetilde{F}^{(\varphi)}((\bsk_\uu,\bszero)) \frac{1}{2^m} \sum_{n=0}^{2^m-1} \wal_{(\bsk_\uu,\bszero)}(\bsx_n).
\end{align*}
Now we distinguish the entries of $\bsk_\uu \in \NN^{|\uu|}$ which are perfect powers of 2 from those which are not. Let $\vv \subseteq \uu$ be the set of coordinates $j$, where $k_j$ has at least two digits in its binary expansion. Accordingly, $\uu \setminus \vv$ is the set of coordinates $j$ which are perfect powers of two. With this notation we have
\begin{align*}
{\rm err}(F;\varphi;\cP) = & \sum_{\emptyset \not= \uu \subseteq [s]} \sum_{\vv \subseteq \uu} \sum_{\bsk_\vv \in (N_2 \setminus N_1)^{|\vv|}} \sum_{\bsell_\vv \in \bigotimes_{j\in \vv} L_{k_j,2}} \sum_{\bsk_{\uu \setminus \vv} \in (N_1 \setminus N_0)^{|\uu|-|\vv|}} \widetilde{F}^{(\varphi)}((\bsk_\vv+\bsell_\vv,\bsk_{\uu \setminus \vv},\bszero)) \\
& \times \frac{1}{2^m} \sum_{n=0}^{2^m-1} \wal_{((\bsk_\vv+\bsell_\vv,\bsk_{\uu \setminus \vv},\bszero))}(\bsx_n).
\end{align*}
Now we take the absolute value and apply the triangle inequality and Lemma~\ref{est:wal:F} to obtain
\begin{align*}
|{\rm err}(F;\varphi;\cP)| \le & \sum_{\emptyset \not= \uu \subseteq [s]} \sum_{\vv \subseteq \uu} C_1^{|\uu|} \, \|F\|_{\uu, \vv}\\
& \times \sum_{\bsk_\vv \in (N_2 \setminus N_1)^{|\vv|}} \sum_{\bsell_\vv \in \bigotimes_{j\in \vv} L_{k_j,2}} \sum_{\bsk_{\uu \setminus \vv} \in (N_1 \setminus N_0)^{|\uu|-|\vv|}}
\left[\prod_{j \in \vv}  \frac{1}{2^{a_{j,1} + \beta a_{j,2}}}\right] \left[ \prod_{j \in \uu \setminus \vv} \frac{1}{2^{a_{j,1}}}\right] \\
& \times \left|\frac{1}{2^m} \sum_{n=0}^{2^m-1} \wal_{((\bsk_\vv+\bsell_\vv,\bsk_{\uu \setminus \vv},\bszero))}(\bsx_n)\right|,
\end{align*}
where for $j \in \vv$ we denote as usual $k_j =2^{a_{j,1}}+2^{a_{j,2}}$ and for $i \in \uu \setminus \vv$ we use $k_i=2^{a_{i,1}}$.
Introducing positive weights $\gamma_\uu$ we obtain
\begin{align*}
|{\rm err}(F;\varphi;\cP)| \le & \left(\max_{\emptyset \not= \uu \subseteq [s]\atop \vv \subseteq \uu} \frac{1}{\gamma_{\uu}}\|F\|_{\uu, \vv}\right) \mathcal{E}_{\bsgamma,\beta}(\cP) = \|F\|_{\bsgamma,\varphi,\psi,\omega} \, \mathcal{E}_{\bsgamma,\beta}(\cP),
\end{align*} 
where
\begin{align*}
\mathcal{E}_{\bsgamma,\beta}(\cP) := & \sum_{\emptyset \not= \uu \subseteq [s]} \gamma_{\uu} \sum_{\vv \subseteq \uu} C_1^{|\uu|} \\
& \times \sum_{\bsk_\vv \in (N_2 \setminus N_1)^{|\vv|}} \sum_{\bsell_\vv \in \bigotimes_{j\in \vv} L_{k_j,2}} \sum_{\bsk_{\uu \setminus \vv} \in (N_1 \setminus N_0)^{|\uu|-|\vv|}}
\left[\prod_{j \in \vv}  \frac{1}{2^{a_{j,1} + \beta a_{j,2}}}\right] \left[ \prod_{i \in \uu \setminus \vv} \frac{1}{2^{a_{i,1}}}\right] \\
& \times \left|\frac{1}{2^m} \sum_{n=0}^{2^m-1} \wal_{((\bsk_\vv+\bsell_\vv,\bsk_{\uu \setminus \vv},\bszero))}(\bsx_n)\right|.    
\end{align*}

We introduce a weight function on both integers and integer vectors that yield a convenient representation of $\mathcal{E}_{\bsgamma,\beta}(\cP)$. For $k = 2^{a_1} + 2^{a_2} + \ell$ with $a_1 > a_2 \ge 0$ and $\ell \in \{0,1,\ldots, 2^{a_2}-1\}$ let
\begin{equation*}
\mu_{\beta}(k) := a_1 + \beta a_2.
\end{equation*}
For $k = 2^{a_1}$ with $a_1 \ge 0$ let
\begin{equation*}
\mu_{\beta}(k) := a_1.
\end{equation*}
Further, let $\mu_{\beta}(0) := 0$. For $\bsk \in \mathbb{N}_0$ let
\begin{equation*}
\mu_{\beta}(\bsk) := \mu_{\beta}(k_1) + \cdots + \mu_{\beta}(k_s).
\end{equation*}
Then we have
\begin{align}\label{def:calE}
\mathcal{E}_{\bsgamma,\beta}(\cP) =  \sum_{\emptyset \not= \uu \subseteq [s]} \gamma_{\uu} \, (2 \,C_1)^{|\uu|} \sum_{\bsk_\uu \in \mathbb{N}^{|\uu|}} \frac{1}{2^{\mu_{\beta}(\bsk_{\uu})}}  \left|\frac{1}{2^m} \sum_{n=0}^{2^m-1} \wal_{(\bsk_\uu,\bszero)}(\bsx_n)\right|.    
\end{align}

We have now established the following proposition.
\begin{proposition}\label{prob:errbd}
Let $F \in\mathcal{F}_{\bsgamma,\varphi,\psi,\omega}$ and assume that $\psi$ and $\omega$ satisfy Assumption~\ref{assumptionA}. Let $\cP$ be a $2^m$-element point set in $(0,1)^s$. Then we have $$|{\rm err}(F;\varphi;\cP)| \le \|F\|_{\bsgamma,\varphi,\psi,\omega} \, \mathcal{E}_{\bsgamma,\beta}(\cP)$$ where $\mathcal{E}_{\bsgamma,\beta}(\cP)$ is given by \eqref{def:calE}. In particular, for the worst-case error \eqref{def:wce} we obtain $${\rm wce}_{\bsgamma,\varphi,\psi,\omega}(\cP) \le \mathcal{E}_{\bsgamma,\beta}(\cP).$$
\end{proposition}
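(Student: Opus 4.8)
The plan is to start from the exact Walsh series of $F$ with respect to $\varphi$ and to reduce the integration error to a weighted sum of Walsh coefficients, each controlled by Lemma~\ref{est:wal:F}. Since $F \in L_2(\RR^s,\varphi)$ we have $F \sim \sum_{\bsk \in \NN_0^s} \widetilde{F}^{(\varphi)}(\bsk)\, \wal_{\bsk}^{(\varphi)}$, and because $\wal_{\bsk}^{(\varphi)}(\Phi^{-1}(\bsx_n)) = \wal_{\bsk}(\Phi(\Phi^{-1}(\bsx_n))) = \wal_{\bsk}(\bsx_n)$, the QMC rule applied to $F\circ\Phi^{-1}$ becomes a linear combination of the plain Walsh sums $\tfrac{1}{2^m}\sum_{n}\wal_{\bsk}(\bsx_n)$. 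The constant term $\bsk=\bszero$ is integrated exactly (its Walsh sum equals $1$ and $\widetilde{F}^{(\varphi)}(\bszero)=\int_{\RR^s}F\varphi$), so it cancels against the true integral; the error is thus $\sum_{\bsk\neq\bszero}\widetilde{F}^{(\varphi)}(\bsk)\,\tfrac{1}{2^m}\sum_{n}\wal_{\bsk}(\bsx_n)$. First I would record this identity, postponing the justification of the term-by-term interchange to the convergence remark below.

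Next I would organize the sum combinatorially. Grouping $\bsk\neq\bszero$ by its support $\uu:=\{j:k_j\neq 0\}$ isolates the genuinely $|\uu|$-dimensional contributions, so that $\bsk=(\bsk_\uu,\bszero)$ with $\bsk_\uu\in\NN^{|\uu|}$. Within a fixed $\uu$ I would split the coordinates according to whether $k_j$ is a power of two: let $\vv\subseteq\uu$ collect the indices for which $k_j$ has at least two nonzero binary digits, and write $k_j=2^{a_{j,1}}+2^{a_{j,2}}+\ell_j$ with $a_{j,1}>a_{j,2}\ge 0$ and $0\le \ell_j<2^{a_{j,2}}$; for $j\in\uu\setminus\vv$ set $k_j=2^{a_{j,1}}$. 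This is exactly the index pattern required by Lemma~\ref{est:wal:F}, so after the triangle inequality I may replace each $|\widetilde{F}^{(\varphi)}(\bsk)|$ by $C_1^{|\uu|}\,\|F\|_{\uu,\vv}\prod_{j\in\vv}2^{-(a_{j,1}+\beta a_{j,2})}\prod_{j\in\uu\setminus\vv}2^{-a_{j,1}}$.

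Finally I would factor out the norm and collapse the index sum. The definition \eqref{Fnorm:infty} gives $\|F\|_{\uu,\vv}\le \gamma_{\uu}\,\|F\|_{\bsgamma,\varphi,\psi,\omega}$, which pulls $\|F\|_{\bsgamma,\varphi,\psi,\omega}$ outside the sum and leaves precisely the quantity that defines $\mathcal{E}_{\bsgamma,\beta}(\cP)$. To reach the compact form \eqref{def:calE} I would use the weight $\mu_{\beta}$: since $(a_{j,1},a_{j,2},\ell_j)$ is a bijective encoding of any integer with at least two nonzero digits and $\mu_{\beta}$ returns $a_{j,1}+\beta a_{j,2}$ on it (and $a_{j,1}$ on a pure power of two), the structured sum over $\bsk_\vv$, $\bsell_\vv$, $\bsk_{\uu\setminus\vv}$ together with the sum over $\vv\subseteq\uu$ reassembles into a single sum over $\bsk_\uu\in\NN^{|\uu|}$ weighted by $2^{-\mu_{\beta}(\bsk_\uu)}$, the combinatorial factor $2^{|\uu|}$ from the subsets $\vv$ being absorbed into the constant $(2C_1)^{|\uu|}$. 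Dividing by $\|F\|_{\bsgamma,\varphi,\psi,\omega}$ and taking the supremum over the unit ball in $\mathcal{F}_{\bsgamma,\varphi,\psi,\omega}$ then gives ${\rm wce}_{\bsgamma,\varphi,\psi,\omega}(\cP)\le \mathcal{E}_{\bsgamma,\beta}(\cP)$.

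The step I expect to be the main obstacle is the term-by-term interchange in the first paragraph: the QMC rule evaluates $F\circ\Phi^{-1}$ pointwise, and point evaluation is not $L_2$-continuous, so expanding under the finite sum is not automatic from $L_2$-convergence. I would justify it by absolute convergence, bounding $|\tfrac{1}{2^m}\sum_n\wal_{\bsk}(\bsx_n)|\le 1$ and checking that $\sum_{\bsk}|\widetilde{F}^{(\varphi)}(\bsk)|<\infty$; by the coefficient bounds this is dominated by $\|F\|_{\bsgamma,\varphi,\psi,\omega}$ times sums of the shape $\sum_{k}2^{-\mu_{\beta}(k)}$, which converge for every $\beta>0$, so dominated convergence legitimizes the rearrangement. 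A secondary, purely bookkeeping difficulty is verifying that the decomposition by the leading two bits is a genuine bijection, so that the index patterns for different $\vv$ are disjoint and no $\bsk_\uu$ is double counted when the structured sums are merged.
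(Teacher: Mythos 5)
Your proposal is correct and follows essentially the same route as the paper: expand the error in the Walsh series with the $\bsk=\bszero$ term cancelling, group by the support $\uu$ and by the set $\vv$ of coordinates whose index has at least two binary digits, apply the triangle inequality together with Lemma~\ref{est:wal:F}, bound $\|F\|_{\uu,\vv}\le\gamma_\uu\|F\|_{\bsgamma,\varphi,\psi,\omega}$, and reassemble the structured sums into \eqref{def:calE} via $\mu_\beta$. Your extra paragraph justifying the term-by-term interchange through absolute summability of the Walsh coefficients is a point the paper passes over silently, so it is a welcome addition rather than a deviation.
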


\subsection{Interlaced polynomial lattice rules of order two}

In this section we discuss a construction of polynomial lattice rules of order two which can achieve optimal convergence rates of \eqref{def:calE}. We consider polynomial lattice over $\mathbb{Z}_2$, the finite field of order $2$, see \cite{niesiam}. We identify $\mathbb{Z}_2$ with $\{0,1\}$ equipped with arithmetic operations modulo $2$. Furthermore, let $\mathbb{Z}_2[x]$ be the ring of polynomials over $\mathbb{Z}_2$ and let $\mathbb{Z}_2((x^{-1}))$ be the ring of Laurent series over $\mathbb{Z}_2$ whose elements are of the form $L(x)=\sum_{\ell=w}^{\infty} t_{\ell} x^{-\ell}$ for $w \in \mathbb{Z}$ and coefficients $t_{\ell}\in \mathbb{Z}_2$. For $m \in \NN$ let
\begin{equation*}
G_m := \{ q \in \mathbb{Z}_2[x] \setminus \{ 0 \} : \deg(q) < m \}. 
\end{equation*}
For every integer $n \in \{0,1,\ldots,2^m-1\}$, let $n = \eta_0 + \eta_1 2 + \cdots + \eta_{m-1} 2^{m-1}$ be the binary expansion of $n$ with digits $\eta_i$ in $\mathbb{Z}_2$, and associate with $n$ the polynomial
\begin{equation*}
n(x) = \sum_{r=0}^{m-1} \eta_r x^r \in \mathbb{Z}_2[x].
\end{equation*}
Furthermore, we denote by $v_m$ the map from $\mathbb{Z}_2((x^{-1}))$ to the set of dyadic rationals $\{a 2^{-m} : a \in \{0,1,\ldots,2^m-1\}\}$, which form a subset of $[0,1)$, defined by
\begin{equation*}
v_m\left( \sum_{\ell=w}^{\infty} t_{\ell} x^{-\ell} \right) = \sum_{\ell=\max(1,w)}^{m} \frac{t_{\ell}}{2^{\ell}}.
\end{equation*}

\begin{definition}[Polynomial lattice rules] 
For any positive integer $m$, choose an irreducible polynomial $P\in\mathbb{Z}_2[x]$ of degree $m$; we refer to $P$ as the \emph{modulus}. For a prescribed dimension $s\in\mathbb{N}$, select polynomials $q_1,\ldots,q_s\in G_m$ and define
\begin{equation*}
\bsq = \bsq(x) = (q_1(x), \dots, q_s(x)) \in G^s_m.
\end{equation*}
The vector $\bsq$ is called the \emph{generating vector}. Then, a {\it (classical) polynomial lattice rule} with $2^m$ points in $s$ dimensions is a QMC rule that is based on the point set $S_{P,m,s}(\bsq)$ which consists of the elements
\[
\bsy_n = \left( v_m\left( \frac{n(x) q_1(x)}{P(x)} \right), \dots, v_m\left( \frac{n(x) q_s(x)}{P(x)} \right) \right) \in [0,1)^s, \quad n \in \{0,1, \dots, 2^m-1\},
\]
for some given modulus $P$ and generating vector $\bsq \in G^{s}_{m}$. 
\end{definition}

Polynomial lattice rules are an important type of QMC rule and constitute a polynomial variant of classical lattice rules with many useful properties. For a detailed exposition, we refer to Chapter~10 of \cite{DP10}. In our analysis, we will need the following character property, whose proof is a combination of Lemma~4.75 and Lemma~10.6 in \cite{DP10}. 

\begin{lemma}\label{le:charprop}
Let $S_{P,m,s}(\bsq):=\{\bsy_0,\bsy_1,\ldots,\bsy_{2^m-1}\}$ be the quadrature points of a polynomial lattice rule with modulus $P \in \mathbb{Z}_2[x]$, $\deg(P)=m$, and generating vector $\bsq=(q_1,\ldots,q_s) \in G_m^s$. Then, for $\bsk=(k_1,\ldots,k_s) \in \{0,1,\ldots,2^m-1\}^s$  we have $$\sum_{n=0}^{2^m-1} \wal_{\bsk}(\bsy_n)=\left\{ 
\begin{array}{ll}
2^m & \mbox{if $k_1(x) q_1(x)+\cdots+k_s(x) q_s(x) \equiv 0 \pmod{P(x)}$,}\\
0 & \mbox{otherwise}.
\end{array}
\right.
$$     
\end{lemma}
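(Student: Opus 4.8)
The plan is to prove the character property in Lemma~\ref{le:charprop} by exploiting the algebraic structure of the polynomial lattice point set and the multiplicative behaviour of Walsh functions. First I would recall that for a product of Walsh functions we have the factorization $\wal_{\bsk}(\bsy_n)=\prod_{j=1}^s \wal_{k_j}(y_{n,j})$, so that the key is to understand $\wal_{k_j}(y_{n,j})$ when $y_{n,j}=v_m(n(x)q_j(x)/P(x))$. The heart of the matter is a ``duality'' between the dyadic digits of $k_j\in\{0,\ldots,2^m-1\}$ and the Laurent-series coefficients appearing in $n(x)q_j(x)/P(x)$. Concretely, writing $k_j=\kappa_{j,0}+\kappa_{j,1}2+\cdots+\kappa_{j,m-1}2^{m-1}$ and associating to $k_j$ the polynomial $k_j(x)=\sum_{r=0}^{m-1}\kappa_{j,r}x^r$, one shows (this is the content of Lemma~4.75 in \cite{DP10}) that $\wal_{k_j}(v_m(L_j))=(-1)^{\langle k_j, \,t_1,\ldots,t_m\rangle}$, where $t_1,\ldots,t_m$ are the relevant coefficients of the Laurent series $L_j=n(x)q_j(x)/P(x)$ and the pairing is the usual inner product of digit vectors over $\mathbb{Z}_2$.

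Next I would carry out the summation over $n$. Taking the product over $j$ and summing, we get
\begin{equation*}
\sum_{n=0}^{2^m-1}\wal_{\bsk}(\bsy_n)=\sum_{n=0}^{2^m-1}(-1)^{c(n)},
\end{equation*}
where $c(n)$ is the $\mathbb{Z}_2$-linear form in the digits $\eta_0,\ldots,\eta_{m-1}$ of $n$ obtained by collecting the contributions of all coordinates. The crucial algebraic identity---this is where Lemma~10.6 of \cite{DP10} enters---is that $c(n)$ vanishes identically (as a function of $n$) precisely when $k_1(x)q_1(x)+\cdots+k_s(x)q_s(x)\equiv 0 \pmod{P(x)}$. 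The mechanism is that the map $n(x)\mapsto v_m(n(x)q_j(x)/P(x))$ is $\mathbb{Z}_2$-linear in the coefficient vector of $n(x)$, and the Walsh character $\wal_{k_j}$ composes with it to give a character of the additive group $(\mathbb{Z}_2^m,+)$; the product of these characters is the trivial character exactly when the stated congruence holds.

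Then I would finish by the standard character-sum dichotomy: for a character $\chi$ of the finite abelian group $\mathbb{Z}_2^m$ (identified with $\{0,1,\ldots,2^m-1\}$ via binary digits of $n$), we have $\sum_{n=0}^{2^m-1}\chi(n)=2^m$ if $\chi$ is trivial and $0$ otherwise. Combining this with the preceding characterization of triviality yields exactly the two cases in the statement, namely $2^m$ when $k_1(x)q_1(x)+\cdots+k_s(x)q_s(x)\equiv 0\pmod{P(x)}$ and $0$ otherwise.

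The main obstacle, and the part requiring genuine care rather than routine bookkeeping, is establishing the precise correspondence between the Walsh-function digit pairing and the Laurent-series coefficients of $n(x)q_j(x)/P(x)$, that is, verifying that the exponent $c(n)$ really is the $\mathbb{Z}_2$-linear form whose identical vanishing is equivalent to the polynomial congruence. This hinges on tracking how the truncation map $v_m$ interacts with the digits of $k_j$ and on the fact that $\deg(P)=m$ controls exactly which Laurent coefficients survive; since this is exactly what is packaged in Lemma~4.75 and Lemma~10.6 of \cite{DP10}, I would invoke those results to bridge this step rather than reprove the digit-level identities from scratch.
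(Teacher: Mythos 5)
Your proposal is correct and follows the same route as the paper, which does not spell out a proof at all but simply notes that the lemma is "a combination of Lemma~4.75 and Lemma~10.6 in \cite{DP10}" --- exactly the two results you invoke for the digit/Laurent-coefficient duality and the dual-lattice characterization. Your additional scaffolding (the $\mathbb{Z}_2$-linearity of $n \mapsto \bsy_n$, the resulting character of $\mathbb{Z}_2^m$, and the trivial-versus-nontrivial character-sum dichotomy) is an accurate account of how those two cited lemmas combine to give the statement.
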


In the following we define interlaced polynomial lattice rules \citep{DKLNS14, G15}, belonging to the family of higher order digital nets, which were first introduced in \cite{dick08}.

\begin{definition}[Interlaced polynomial lattice rules] 
Define the digit interlacing function with interlacing factor 2 by
\begin{equation*}
D : [0,1)^2 \to [0,1), \quad (x_1,x_2) \mapsto \sum_{i=1}^{\infty} \left(\frac{\xi_{1,i}}{2^{2i-1}}+\frac{\xi_{2,i}}{2^{2i}}\right),
\end{equation*}
where $x_j = \xi_{j,1} 2^{-1} + \xi_{j,2} 2^{-2} + \cdots$ with binary digits $\xi_{j,i}$ in $\{0,1\}$ for $j \in \{1,2\}$. We also define such a function for vectors by setting
\begin{equation*}
D : [0,1)^{2 s} \to [0,1)^s, \quad (x_1, \dots, x_{2 s}) \mapsto \left( D(x_1,x_2), \dots, D(x_{2 s-1},x_{2 s}) \right).
\end{equation*}
Then, an {\it interlaced polynomial lattice rule of order 2} with $2^m$ points in $s$ dimensions is a QMC rule using $D(S_{P,m,2 s}(\bsq))$ as quadrature points, for some given modulus $P \in \mathbb{Z}_2[x]$ of degree $m$ and and generating vector $\bsq \in G^{2s}_{m}$.
\end{definition}

Unfortunately, any polynomial lattice rule as well as any interlaced polynomial lattice rule contains the origin as quadrature point. However, we require node sets in the open unit cube $(0,1)^s$; see Proposition~\ref{prob:errbd}. To avoid this problem, observe that the coordinates $x_{n}^{(j)}$, where $j \in [s]$ and $n \in \{0,1,\ldots,2^m-1\}$, of the nodes $\bsx_n:=D(\bsy_n)$ of an interlaced polynomial lattice rule of order 2 with $2^m$ points in dimension $s$ are dyadic rationals of the form $x_n^{(j)}=\xi_{n,1}^{(j)} 2^{-1}+\xi_{n,2}^{(j)} 2^{-2}+\cdots+\xi_{n,2 m}^{(j)} 2^{-2 m}$ with dyadic digits $\xi_{n,i}^{(j)} \in \{0,1\}$. Thus, we simply add the (digital) shift $2^{-2m-1}$, resulting in $$x_{n,i}^{(j)} + \frac{1}{2^{2m+1}}=\frac{\xi_{n,1}^{(j)}}{2}+\frac{\xi_{n,2}^{(j)}}{2^2}+\cdots+\frac{\xi_{n,2 m}^{(j)}}{2^{2 m}}+\frac{1}{2^{2m+1}},$$ to every coordinate of every point of the interlaced polynomial lattice rule. If $\cP_{m,s}:=D(S_{P,m,2 s}(\bsq))$ denotes the interlaced polynomial lattice point set, then we denote by $\cP_{m,s}^{{\rm sh}}$ the {\it shifted interlaced polynomial lattice point set} $\cP_{m,s} + 2^{-2m-1} \boldsymbol{1}$ thereof, where $\boldsymbol{1}$ is the all-1 vector in dimension $s$. Obviously, $\cP_{m,s}^{{\rm sh}}$ belongs to $(0,1)^s$, and thus avoids the origin.

It is sometimes useful to also add a digital shift to the earlier digits. To do so, we need digital addition modulo $2$. For $x = x_1 2^{-1} + x_2 2^{-2} + \cdots$ and $y = y_1 2^{-1} + y_2 2^{-2} + \cdots$, we define $z = x \oplus y$ as $z = z_1 2^{-1} + z_2 2^{-2} + \cdots$ by $z_i := x_i + y_i \pmod{2}$ for $i \in \NN$. For vectors $\bsx, \bsy$ we define $\bsx \oplus \bsy := (x_1 \oplus y_1, \ldots, x_s \oplus y_s)$. Let $\bsdelta \in [0,1)^s$. Then we define the digitally shifted polynomial lattice rule $\cP_{m,s}^{{\rm sh}} := \{\bsx \oplus \bsdelta: \bsx \in \cP_{m,s}\}$. For $\bsdelta \in (0,1)^s$ such that $\bsdelta 2^{2m} \notin \mathbb{Z}$, we obtain that none of the components of $\cP_{m,s}^{{\rm sh}} \oplus \bsdelta$ are $0$ and hence all the components of all points avoid the origin. Note that for $\bsdelta = 2^{-2m-1} \bsone$ we have $\cP_{m,s} + \bsdelta = \cP_{m,s} \oplus \bsdelta$. 

We also extend the definition of the interlacing function $D:[0,1)^2 \rightarrow [0,1)$ to nonnegative integers by setting
\[
E : \mathbb{N}_0^2 \to \mathbb{N}_0, \quad (\ell_1,  \ell_2) \mapsto \sum_{a=0}^{\infty} (\ell_{1,a} 2^{2 a}+\ell_{2,a} 2^{2 a+1}), 
\]
where $\ell_j = \ell_{j,0} + \ell_{j,1} 2 + \ell_{j,2} 2^2 + \cdots$ for $j \in \{1,2\}$. We also extend this function to vectors via
\[
E : \mathbb{N}_0^{2 s} \to \mathbb{N}_0^s, \quad (\ell_1, \dots, \ell_{2 s}) \mapsto \left( E(\ell_1, \ell_2), \dots, E(\ell_{2 s-1}, \ell_{2 s}) \right). 
\]
Note that for $k,\ell \in \NN$ the $j$-th digit of $k$ is the digit at position $2j-1$ in 
$E(k,\ell)$ and the $j$-th digit of $\ell$ is the digit at position $2j$ in $E(k,\ell)$.

\begin{lemma}\label{lem_alpha}
For $k \in \mathbb{N}_0$ let
\begin{equation*}
\mu(k) := \begin{cases} 0 & \mbox{if } k = 0, \\ \lfloor \log_2(k) \rfloor & \mbox{if } k \in \mathbb{N}. \end{cases}
\end{equation*}
Then, for $\beta \in (0,1]$ we have $\mu_{\beta}(E(0,0)) =  0$ and, for $(k_1, k_2) \in \mathbb{N}_0^2 \setminus \{(0, 0)\}$, we have
\begin{equation*}
\mu_{\beta}(E(k_1, k_2)) \ge (1+\beta) \, (\mu(k_1) +  \mu(k_2) - 1).
\end{equation*}
\end{lemma}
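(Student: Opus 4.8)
The plan is to prove the inequality by directly comparing the dyadic positions of the nonzero digits in $E(k_1,k_2)$ with the quantities $\mu(k_1)$ and $\mu(k_2)$. Recall that $\mu(k_i) = \lfloor \log_2 k_i \rfloor$ is exactly the position of the leading (most significant) nonzero digit of $k_i$ in its binary expansion, for $k_i \in \NN$. By the interlacing rule stated just above the lemma, the $a$-th digit of $k_1$ sits at position $2a$ in $E(k_1,k_2)$ and the $a$-th digit of $k_2$ sits at position $2a+1$. Hence the leading nonzero digit of $k_1$ lands at position $2\mu(k_1)$ and the leading nonzero digit of $k_2$ lands at position $2\mu(k_2)+1$ in $E(k_1,k_2)$. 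The key quantities governing $\mu_\beta$ are $a_1$ and $a_2$, the positions of the two \emph{highest} nonzero digits of the combined integer $E(k_1,k_2)$.

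First I would dispose of the degenerate cases. If exactly one of $k_1,k_2$ is zero, then $E(k_1,k_2)$ is itself a rescaled copy of the nonzero argument; I would check the bound $\mu_\beta(E) \ge (1+\beta)(\mu(k_1)+\mu(k_2)-1)$ directly, noting that the right-hand side involves $\mu(0)=0$ and $-1$, so it is comparatively small and easily dominated. The substantive case is $k_1,k_2 \in \NN$. Here I would split into the subcase where $E(k_1,k_2)$ has only one nonzero digit (which forces one of $k_1,k_2$ to be a single power of two and the other to be handled carefully) versus the generic subcase where $E(k_1,k_2) = 2^{a_1}+2^{a_2}+(\text{lower order})$ has at least two nonzero digits, so that $\mu_\beta(E) = a_1 + \beta a_2$ with $a_1 > a_2$ the top two digit positions.

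The heart of the argument is to bound $a_1$ and $a_2$ from below using the leading digit positions $2\mu(k_1)$ and $2\mu(k_2)+1$. Since these two positions are distinct (one even, one odd) and each carries a nonzero digit of $E$, the two highest digit positions $a_1 > a_2$ of $E$ satisfy $a_1 \ge \max(2\mu(k_1), 2\mu(k_2)+1)$ and $a_2 \ge \min(2\mu(k_1), 2\mu(k_2)+1)$. I would then compute
\begin{equation*}
\mu_\beta(E(k_1,k_2)) = a_1 + \beta a_2 \ge \max(2\mu(k_1), 2\mu(k_2)+1) + \beta \min(2\mu(k_1), 2\mu(k_2)+1),
\end{equation*}
and verify that in both orderings of the maximum and minimum this lower bound is at least $(1+\beta)(\mu(k_1)+\mu(k_2)-1)$. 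The cleanest way is to observe that $\max + \beta\min \ge (1+\beta)\min$ and also $\max + \beta\min \ge \max \ge (1+\beta)\max/(1+\beta)$; more usefully, since $\beta \le 1$, one checks $\max(2\mu(k_1),2\mu(k_2)+1) + \beta\min(2\mu(k_1),2\mu(k_2)+1) \ge (1+\beta)(\mu(k_1)+\mu(k_2))$ after absorbing the odd ``$+1$'' shift, whence the $-1$ on the right-hand side gives slack.

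The main obstacle I anticipate is the boundary bookkeeping: correctly handling the case where $E(k_1,k_2)$ has fewer than two nonzero binary digits (so the definition of $\mu_\beta$ switches from $a_1+\beta a_2$ to $a_1$ alone), and ensuring the ``$+1$'' arising from the odd interlacing positions for $k_2$ is tracked consistently rather than lost. The $-1$ term on the right-hand side of the claimed inequality is precisely the cushion that makes these edge cases work out, so I would make sure the generic estimate is tight enough that this cushion covers every degenerate configuration. Once the digit-position comparison is set up carefully, the remaining verification in each case is a short arithmetic check using only $\beta \in (0,1]$ and $\mu(k_i) \ge 0$.
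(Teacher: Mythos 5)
Your proposal is correct and follows essentially the same route as the paper: both arguments track where the leading binary digits of $k_1$ and $k_2$ land after interlacing (positions $2\mu(k_1)$ and $2\mu(k_2)+1$), bound $a_1$ and $a_2$ by the max and min of these, and finish with a short case check using $\beta\le 1$, treating the cases with a zero component separately. The only cosmetic remark is that your subcase ``$E(k_1,k_2)$ has a single nonzero digit'' is vacuous once $k_1,k_2\in\NN$, since the two leading digits occupy distinct (one even, one odd) positions — a fact you already note — so that subcase only arises when one argument is zero, which you handle anyway.
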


\begin{proof}
First, assume that $k_1 = 2^{a_1} + \ell > k_2 = 2^{b_1}  + m > 0$. Then $a_1 \ge b_1$ and
\begin{equation*}
\mu_{\beta}(E(k_1, k_2)) \ge  2a_1 - 1 + \beta 2b_1 \ge (1+\beta) (a_1 + b_1) -1 \ge  (1+\beta)\, (\mu(k_1) + \mu(k_2) - 1).
\end{equation*}
The proof is similar if $k_1 \le k_2$. Now consider $k_2 = 0$. Then $\mu(k_2) = 0$, and for $k_1 = 2^{a_1} + 2^{a_2} + \ell$, we have
\begin{equation*}
\mu_{\beta}(E(k_1, k_2)) = 2a_1-1 + \beta (2a_2-1) \ge  (1+\beta) \mu(k_1) - (1+ \beta),
\end{equation*}
while for $k_1 = 2^{a_1}$ we have
\begin{equation*}
\mu_{\beta}(E(k_1, k_2)) = 2a_1-1 \ge 2 \mu(k_1) - (1 + \beta).
\end{equation*}
The case where $k_1 = 0$ and $k_2 > 0$ is similar. The case where $k_1=k_2 = 0$ is obvious.
\end{proof}

For a given set $\emptyset \neq \vv \subseteq [2 s]$, we define
\[
\uu(\vv) := \{\lceil j/2 \rceil : j \in \vv\} \subseteq [s], 
\]
where each element appears only once as is typical for sets. The set $\uu(\vv)$ can be viewed as an indicator on whether the set $\vv$ includes any element from each block of $2$ components from $[2 s]$. In the following, we will use the fact that $|\uu(\vv)| \le |\vv| \le 2|\uu(\vv)| $.

For a vector $\bsk=(k_1,\ldots,k_d) \in \NN_0^d$ of arbitrary length $d \in \NN$, put $\mu(\bsk):=\sum_{j=1}^d \mu(k_j)$. We now apply Lemma~\ref{lem_alpha} to the vector $(\bsell_{\vv}, \bszero) \in \NN_0^{2s}$ in blocks of $2$ components.
For $\bsell_{\vv} \in \mathbb{N}^{2|\vv|}$, noting that $\mu_{\beta}(E(\bszero)) = 0$, we obtain
\begin{equation}\label{est:mubbmu}
\mu_{\beta}(E(\bsell_\vv, \bszero)) \geq  (1 + \beta) (\mu(\bsell_\vv) - |\uu(\vv)|).
\end{equation}

\subsection{Component-by-component construction}

We summarize the fundamental choice of underlying integration points in the following analysis:
\begin{itemize}
\item Let $m,s\in \mathbb{N}$ and let $S_{P, m, 2s}(\bsq) := \{\bsy_0, \bsy_1, \ldots, \bsy_{2^m-1}\}$ in $[0,1)^{2s}$ be the nodes of a polynomial lattice rule in dimension $2 s$ with modulus $P \in \mathbb{Z}_2[x]$ of degree $m$ and generating vector $\bsq \in G_m^{2s}$. 
\item Let $\cP_{m,s} := D(S_{P,m,2s}(\bsq)) = \{D(\bsy_0), D(\bsy_1), \ldots, D(\bsy_{2^m-1})\}$ in $[0,1)^s$ be the interlaced version of $S_{P, m, 2s}(\bsq)$. 
\item Let $\cP_{m,s}^{{\rm sh}}:=\cP_{m,s} + 2^{-2m-1} \boldsymbol{1}$ or $\cP_{m,s}^{{\rm sh}} := \cP_{m,s} \oplus \bsdelta$ be the shifted version of $\cP_{m,s}$, where $\bsdelta$ is chosen such that $ \cP_{m,s} \subset (0,1)^s$.
\end{itemize}
Then we study the error bound $\mathcal{E}_{\bsgamma,\beta}(\cP_{m,s}^{{\rm sh}})$ from Proposition~\ref{prob:errbd}. We have
\begin{align*}
\mathcal{E}_{\bsgamma,\beta}(\cP_{m,s}^{{\rm sh}})=  \sum_{\emptyset \not= \uu \subseteq [s]} \gamma_{\uu} \, (2 \, C_1)^{|\uu|} \sum_{\bsk_\uu \in \mathbb{N}^{|\uu|}} \frac{1}{2^{\mu_{\beta}(\bsk_{\uu})}}  \left|\frac{1}{2^m} \sum_{n=0}^{2^m-1} \wal_{(\bsk_\uu,\bszero)}(D(\bsy_n) +2^{-2m-1} \boldsymbol{1})\right|.
\end{align*}
Note that $$\wal_{(\bsk_\uu,\bszero)}(D(\bsy_n) \oplus \bsdelta)=\wal_{(\bsk_\uu,\bszero)}(D(\bsy_n)) \wal_{(\bsk_\uu,\bszero)}(\bsdelta)$$ and $|\wal_{(\bsk_\uu,\bszero)}(\bsdelta)|=1$. Hence, applying the absolute value, we get rid of the dependence on the shift. This way we obtain 
\begin{align*}
\mathcal{E}_{\bsgamma,\beta}(\cP_{m,s}^{{\rm sh}}) & = \sum_{\emptyset \not= \vv \subseteq [2s]} \gamma_{\uu(\vv)} \, (2 \,C_1)^{|\uu(\vv)|} \sum_{\bsk_\vv \in \mathbb{N}^{|\vv|}} \frac{1}{2^{\mu_{\beta}(E(\bsk_{\vv}, \bszero))}}  \left|\frac{1}{2^m} \sum_{n=0}^{2^m-1} \wal_{E(\bsk_\vv,\bszero)}(D(\bsy_n))\right|\\
& = \sum_{\emptyset \not= \vv \subseteq [2s]} \gamma_{\uu(\vv)} \, (2 \, C_1)^{|\uu(\vv)|} \sum_{\bsk_\vv \in \mathbb{N}^{|\vv|}} \frac{1}{2^{\mu_{\beta}(E(\bsk_{\vv}, \bszero))}}  \left|\frac{1}{2^m} \sum_{n=0}^{2^m-1} \wal_{(\bsk_\vv,\bszero)}(\bsy_n)\right|,
\end{align*}
where $(\bsk_\vv, \bszero) \in \mathbb{N}_0^{2s}$ and where we used that for a general $\bsk \in \mathbb{N}_0^{2s}$ and $\bsx \in [0,1)^{2s}$ we have $\wal_{E(\bsk)}(D(\bsx)) = \wal_{\bsk}(\bsx)$. Using the estimate \eqref{est:mubbmu} and afterwards Lemma~\ref{le:charprop} we obtain
\begin{align}\label{eq:EBsq}
\mathcal{E}_{\bsgamma,\beta}(\cP_{m,s}^{{\rm sh}})
& \le  \sum_{\emptyset \not= \vv \subseteq [2s]} \gamma_{\uu(\vv)} \, (2 \, C_1)^{|\uu(\vv)|}  2^{(1+\beta) |\uu(\vv)|} \sum_{\bsk_\vv \in \mathbb{N}^{|\vv|}} \frac{1}{2^{(1+\beta) \mu(\bsk_{\vv},\bszero)}}  \left|\frac{1}{2^m} \sum_{n=0}^{2^m-1} \wal_{(\bsk_\vv,\bszero)}(\bsy_n)\right| \nonumber\\  
& = \sum_{\emptyset \not= \vv \subseteq [2s]} \gamma_{\uu(\vv)} \, \left(C_1 \, 2^{2+\beta}\right)^{|\uu(\vv)|} \sum_{\bsk_\vv \in S^\perp_{\vv}(\bsq)} \frac{1}{2^{(1+\beta) \mu(\bsk_{\vv}, \bszero)}}\\
& = : B_s(\bsq),\nonumber
\end{align}
where
\begin{equation*}
S^\perp_{\vv}(\bsq) := \left\{ \bsk_\vv \in \mathbb{N}^{|\vv|}: \sum_{j \in \vv} {\rm tr}_m(k_j)(x) q_j(x) \equiv 0 \pmod{P(x)}\right\},
\end{equation*}
where, for $k=\sum_{i=0}^{\infty} \kappa_i 2^i$ in $\mathbb{N}$ with binary digits $\kappa_i \in \{0,1\}$, which become eventually 0 from a certain index on, the $m$-digit truncation is defined as ${\rm tr}_m(k):=\kappa_0 +\kappa_1 2+\cdots+\kappa_{m-1} 2^{m-1}$, which belongs to $\{0,1,\ldots,2^m-1\}$. Remember that $C_1 = \max(C, \tfrac12)$ and $C > 0$ is the constant from Assumption~\ref{assumptionA}.

Thus, we use $B_s(\bsq)$ as the error criterion for a (shifted) interlaced polynomial lattice rule that dominates the worst-case integration error ${\rm wce}_{\bsgamma,\varphi}(\cP_{m,s}^{{\rm sh}})$, according to Proposition~\ref{prob:errbd} and \eqref{eq:EBsq}. The expression for $B_s(\bsq)$ in \eqref{eq:EBsq} is very similar to Eq.~(3.30) in \cite{DKLNS14} where $b = 2$ and $\alpha = 1+\beta$. The results apply with some minor modifications. In particular, we can apply Theorem~3.9 in \cite{DKLNS14} to obtain the following result.

\begin{lemma}[CBC Error Bound]\label{CBCerrorbound}
Let $m, s \in \mathbb{N}$ and let $P \in \mathbb{Z}_2[x]$ be an irreducible polynomial with $\deg(P) = m $. Let $\bsgamma=\{\gamma_{\uu}\}_{\uu \subseteq [s]}$ be positive real coordinate weights. Then a generating vector $\bsq^* = (1, q_2^*, \ldots, q_{2s}^*) \in G_m^{2s} $ can be constructed using a component-by-component approach, minimizing $B_s(\bsq^\ast)$ in each step, such that for all $ \lambda \in (\frac{1}{1+\beta}, 1]$ we have
\begin{equation*}
B_s(\bsq^\ast) \leq \left( \frac{4}{2^m} \sum_{\emptyset \not= \uu \subseteq [s]} \gamma_{\uu}^\lambda K_{\beta,\lambda}^{|\uu|} \right)^{1/\lambda},
\end{equation*}
where $K_{\beta,\lambda}:=12 (C_1 / (2^{\lambda (1+\beta)} - 2))^2$.
\end{lemma}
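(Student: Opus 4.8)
The plan is to run the standard component-by-component (CBC) argument on the error functional $B_s(\bsq)$ from \eqref{eq:EBsq}, exploiting the observation made there that $B_s(\bsq)$ coincides with the criterion of Eq.~(3.30) in \cite{DKLNS14} upon setting $b=2$ and $\alpha = 1+\beta$. Fix $\lambda \in (\tfrac{1}{1+\beta},1]$ and put $q_1^\ast = 1$. For $d = 2,\ldots,2s$, having chosen $q_1^\ast,\ldots,q_{d-1}^\ast$, define $q_d^\ast$ as a minimizer over $q_d \in G_m$ of the truncated functional obtained from \eqref{eq:EBsq} by restricting the outer sum to subsets $\vv \subseteq [d]$. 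The backbone of the analysis is the elementary bound that, for nonnegative $A(\cdot)$,
\begin{equation*}
\min_{q_d \in G_m} A(q_d) \le \left( \frac{1}{|G_m|} \sum_{q_d \in G_m} A(q_d)^\lambda \right)^{1/\lambda},
\end{equation*}
which holds because $x \mapsto x^\lambda$ is increasing and the minimum is at most the mean.

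The core step is the averaging over $q_d$, which is where the parameter $\beta$ from Assumption~\ref{assumptionA} governs convergence. After applying Jensen's inequality $(\sum_i a_i)^\lambda \le \sum_i a_i^\lambda$ (valid for $0 < \lambda \le 1$) to the truncated functional, I would average the character sum over $q_d$ using Lemma~\ref{le:charprop}: for each $\vv \ni d$ with $\mathrm{tr}_m(k_d) \not\equiv 0 \pmod{P}$, the congruence $\sum_{j \in \vv} \mathrm{tr}_m(k_j)\, q_j \equiv 0 \pmod{P}$ has a \emph{unique} solution in $q_d$ modulo $P$, since $P$ is irreducible and hence $\mathbb{Z}_2[x]/P$ is a field. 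Averaging over the $|G_m| = 2^m - 1$ admissible polynomials therefore decouples the coordinates, and the remaining sum factors into single-coordinate sums
\begin{equation*}
\sum_{k=1}^\infty \frac{1}{2^{\lambda(1+\beta)\mu(k)}} = \sum_{a=0}^\infty 2^{a(1-\lambda(1+\beta))} = \frac{2^{\lambda(1+\beta)}}{2^{\lambda(1+\beta)} - 2},
\end{equation*}
where I used that $\mu(k) = a$ for the $2^a$ integers $k \in \{2^a,\ldots,2^{a+1}-1\}$. This series converges \emph{exactly} when $\lambda(1+\beta) > 1$, i.e. $\lambda > \tfrac{1}{1+\beta}$, and produces the denominator $2^{\lambda(1+\beta)} - 2$ appearing in $K_{\beta,\lambda}$.

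Finally I would assemble the induction over $d$ and regroup the $2s$ lattice coordinates into the $s$ interlacing blocks $\{2i-1,2i\}$, $i \in [s]$, recorded by $\uu(\vv)$. Distributing the block constant $(C_1 2^{2+\beta})^{|\uu(\vv)|}$ across coordinates via $|\uu(\vv)| \le |\vv| \le 2|\uu(\vv)|$ and summing the single-coordinate contribution over the three ways a block can be met by $\vv$ (namely $\{2i-1\}$, $\{2i\}$, or $\{2i-1,2i\}$) yields a per-block factor quadratic in the single-coordinate sum; collecting all numerical and $(\beta,\lambda)$-dependent constants into $K_{\beta,\lambda} = 12\,(C_1/(2^{\lambda(1+\beta)}-2))^2$ and using $|G_m| = 2^m-1 \ge 2^{m-1}$ together with the character-count factor $\le 2$ to extract the prefactor $4/2^m$ gives the factorized bound $\frac{4}{2^m}\sum_{\emptyset \ne \uu \subseteq [s]} \gamma_\uu^\lambda K_{\beta,\lambda}^{|\uu|}$; taking $1/\lambda$-th powers finishes the proof. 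The main obstacle is the interlacing bookkeeping: because the CBC runs over the $2s$ polynomial-lattice coordinates while the weights $\gamma_{\uu}$ live on subsets of $[s]$, one must verify that the greedy single-coordinate step is compatible with the block-indexed weights and that the three-way block count produces precisely the square appearing in $K_{\beta,\lambda}$. This is exactly the \emph{minor modification} of \cite{DKLNS14} referred to above, and the only place where pinning down the explicit constant requires genuine care.
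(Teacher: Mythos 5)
Your proposal is correct and follows essentially the same route as the paper: the paper simply invokes Theorem~3.9 of \cite{DKLNS14} (whose proof is exactly the CBC averaging argument you sketch --- the min-versus-mean inequality, Jensen's inequality for $0<\lambda\le 1$, character-sum averaging via the field $\mathbb{Z}_2[x]/(P)$, and the geometric series $\sum_{k\ge 1}2^{-\lambda(1+\beta)\mu(k)}$ converging precisely for $\lambda(1+\beta)>1$) and then performs the identical regrouping of $\vv\subseteq[2s]$ into $\uu(\vv)\subseteq[s]$, with the count $\sum_{\vv:\,\uu(\vv)=\uu}1\le 3^{|\uu|}$ and the per-coordinate constant $J_{\beta,\lambda}=2C_1/(2^{\lambda(1+\beta)}-2)$ giving $K_{\beta,\lambda}=3J_{\beta,\lambda}^2=12\bigl(C_1/(2^{\lambda(1+\beta)}-2)\bigr)^2$. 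The only content of the paper's proof beyond the citation is this bookkeeping together with $2/(2^m-1)\le 4/2^m$, all of which your plan reproduces correctly.
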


\begin{proof}
From Theorem~3.9 in \cite{DKLNS14} we obtain
\begin{equation*}
B_s(\bsq^\ast) \leq \left( \frac{2}{2^m-1} \sum_{\emptyset \not= \vv \subseteq [2s]} \gamma_{\uu(\vv)}^\lambda \frac{ (C_1 \, 2^{2+\beta})^{\lambda|\uu(\vv)|}}{ (2^{(1+\beta) (\lambda+1)} - 2^{2+\beta})^{|\vv|}} \right)^{1/\lambda},
\end{equation*}
for all $ \lambda \in ( \frac{1}{1+\beta}, 1]$. Obviously, $2/(2^m-1)\le 4/2^m$ and furthermore, 
$$
\frac{(C_1 \, 2^{2+\beta})^{\lambda|\uu(\vv)|}}{ (2^{(1+\beta) (\lambda+1)} - 2^{2+\beta})^{|\vv|}} =  \frac{(C_1 \, 2^{2+\beta})^{\lambda|\uu(\vv)|}}{2^{(1+\beta)|\vv|}} \frac{1}{(2^{\lambda (1+\beta)} - 2)^{|\vv|}}
\le J_{\beta,\lambda}^{|\vv|},
$$
with $J_{\beta,\lambda}:=2\, C_1 / (2^{\lambda (1+\beta)} - 2)$. This implies 
\begin{equation*}
B_s(\bsq^\ast) \leq \left( \frac{4}{2^m} \sum_{\emptyset \not= \vv \subseteq [2s]} \gamma_{\uu(\vv)}^\lambda J_{\beta,\lambda}^{|\vv|} \right)^{1/\lambda}.
\end{equation*}
Now, recall that for $\emptyset \not=\vv \subseteq [2 s]$ we have $\uu(\vv)=\{\lceil j/2\rceil \ : \ j \in \vv\} \subseteq [s]$, where each element appears only once, as is typical for sets, in particular $|\vv|/2 \le |\uu(\vv)| \le | \vv|$. Hence
\begin{align*}
\sum_{\emptyset \not= \vv \subseteq [2s]} \gamma_{\uu(\vv)}^\lambda J_{\beta,\lambda}^{|\vv|} \le &  \sum_{\emptyset \not= \uu \subseteq [s]} \gamma_{\uu}^\lambda J_{\beta,\lambda}^{2 |\uu|} \sum_{\vv \subseteq [2s] \atop \uu(\vv)=\uu} 1 \le \sum_{\emptyset \not= \uu \subseteq [s]} \gamma_{\uu}^\lambda (3 J_{\beta,\lambda}^2)^{|\uu|},  
\end{align*}
where we used that $\sum_{\vv \subseteq [2s] \atop \uu(\vv)=\uu} 1 \le 3^{|\uu|}$. Setting $K_{\beta,\lambda}:=3 J_{\beta,\lambda}^2$ we obtain the claimed result.
\end{proof}

Combining Lemma~\ref{CBCerrorbound} with Proposition~\ref{prob:errbd} and Eq.~\eqref{eq:EBsq} we obtain the following bound on the worst-case error~\eqref{def:wce}. 

\begin{theorem}\label{thm:error_bound}
Consider the approximation of the integral $\int_{\mathbb{R}^s} F(\bsx) \varphi(\bsx) \,\mathrm{d} \bsx$ of functions $F \in \mathcal{F}_{\bsgamma,\varphi,\psi,\omega}$ and assume that the weight functions $\psi,\omega$ used in \eqref{eq_uvnorm}, \eqref{Fnorm:infty} to define the norm in $\mathcal{F}_{\bsgamma,\varphi,\psi,\omega}$, satisfy Assumption~\ref{assumptionA} (which determines $\beta$). Let $m, s \in \mathbb{N}$ and let $P \in \mathbb{Z}_2[x]$ be an irreducible polynomial with $\deg(P) = m $. Let $\bsgamma=\{\gamma_{\uu}\}_{\uu \subseteq [s]}$ be positive real coordinate weights. Then a generating vector $\bsq^* = (1, q_2^*, \ldots, q_{2s}^*) \in G_m^{2s}$ can be constructed using a component-by-component approach, minimizing $ B_s(\bsq)$ given in \eqref{eq:EBsq} in each step, such that for the shifted interlaced lattice rule of order 2 generated by $\bsq^*$ with underlying quadrature points $\cP_{m,s}^{{\rm sh}}$, such that for all $ \lambda \in ( \frac{1}{1+\beta}, 1]$ the worst-case integration error satisfies
\begin{equation*}
{\rm wce}_{\bsgamma,\varphi,\psi,\omega}(\cP_{m,s}^{{\rm sh}})\le  \left( \frac{4}{2^m} \sum_{\emptyset \not= \uu \subseteq [s]} \gamma_{\uu}^\lambda K_{\beta,\lambda}^{|\uu|} \right)^{1/\lambda},
\end{equation*}
where $K_{\beta,\lambda}:=12 (C_1 / (2^{\lambda (1+\beta)} - 2))^2$, as in Lemma~\ref{CBCerrorbound}.
\end{theorem}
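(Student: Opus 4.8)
The plan is to prove the theorem purely by assembly: chain together the worst-case error bound of Proposition~\ref{prob:errbd}, the reduction of the error functional $\mathcal{E}_{\bsgamma,\beta}$ to the lattice criterion $B_s(\bsq)$ recorded in \eqref{eq:EBsq}, and the component-by-component bound on $B_s(\bsq^\ast)$ from Lemma~\ref{CBCerrorbound}. No new estimate is required; the task is to verify that these three pieces compose into a single chain of inequalities valid on the stated range of $\lambda$.

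First I would check that the shifted point set lies in the open cube, $\cP_{m,s}^{\rm sh} \subset (0,1)^s$, so that Proposition~\ref{prob:errbd} applies: by construction of the shift (either $+\,2^{-2m-1}\bsone$ or $\oplus\,\bsdelta$ with $\bsdelta 2^{2m}\notin\mathbb{Z}$) no coordinate of any node equals $0$, so the origin is avoided. Proposition~\ref{prob:errbd} then gives ${\rm wce}_{\bsgamma,\varphi,\psi,\omega}(\cP_{m,s}^{\rm sh}) \le \mathcal{E}_{\bsgamma,\beta}(\cP_{m,s}^{\rm sh})$.

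The middle step is the inequality $\mathcal{E}_{\bsgamma,\beta}(\cP_{m,s}^{\rm sh}) \le B_s(\bsq)$ from \eqref{eq:EBsq}, whose derivation I would recall only at the level of its four moving parts: (i) the digital shift factors out of each Walsh function as a unimodular scalar $\wal_{(\bsk_\uu,\bszero)}(\bsdelta)$ and therefore drops out under the absolute value; (ii) the interlacing identity $\wal_{E(\bsk)}(D(\bsx)) = \wal_{\bsk}(\bsx)$ rewrites the interlaced sums over $D(\bsy_n)$ as sums over the underlying dimension-$2s$ nodes $\bsy_n$; (iii) the estimate \eqref{est:mubbmu}, a consequence of Lemma~\ref{lem_alpha}, replaces the weight $2^{-\mu_\beta(E(\bsk_\vv,\bszero))}$ by $2^{-(1+\beta)\mu(\bsk_\vv,\bszero)}$ up to the factor $2^{(1+\beta)|\uu(\vv)|}$; and (iv) the character property Lemma~\ref{le:charprop} collapses the normalized Walsh sum $|2^{-m}\sum_n \wal_{(\bsk_\vv,\bszero)}(\bsy_n)|$ to the indicator of the dual-net condition, restricting the inner sum to $S^\perp_\vv(\bsq)$ and producing exactly $B_s(\bsq)$.

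Finally I would invoke Lemma~\ref{CBCerrorbound}: for the CBC-constructed $\bsq^\ast=(1,q_2^\ast,\ldots,q_{2s}^\ast)$ and every $\lambda\in(\tfrac{1}{1+\beta},1]$ one has $B_s(\bsq^\ast) \le \left(\tfrac{4}{2^m}\sum_{\emptyset\neq\uu\subseteq[s]}\gamma_\uu^\lambda K_{\beta,\lambda}^{|\uu|}\right)^{1/\lambda}$ with $K_{\beta,\lambda}=12\,(C_1/(2^{\lambda(1+\beta)}-2))^2$, and chaining the three displays yields the claim. Since every genuine estimate has been carried out upstream, there is no serious analytic obstacle remaining; the only points requiring care are bookkeeping ones — confirming that the admissible range of $\lambda$ is exactly the one dictated by Lemma~\ref{CBCerrorbound} (so that $2^{\lambda(1+\beta)}-2>0$ and the constant $K_{\beta,\lambda}$ is finite), that this constant is transported unchanged through the chain, and that the reduction to $B_s(\bsq^\ast)$ uses the very generating vector optimized in the CBC step, so that the bound being applied is indeed the one proved for $\bsq^\ast$.
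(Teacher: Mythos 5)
Your proposal is correct and follows exactly the route the paper takes: the theorem is obtained by chaining Proposition~\ref{prob:errbd}, the reduction $\mathcal{E}_{\bsgamma,\beta}(\cP_{m,s}^{{\rm sh}})\le B_s(\bsq)$ from \eqref{eq:EBsq}, and Lemma~\ref{CBCerrorbound}. Your additional checks (that the shift keeps the nodes in $(0,1)^s$, that the $\lambda$-range matches the one in Lemma~\ref{CBCerrorbound}, and that the CBC vector used is the one being bounded) are exactly the bookkeeping the paper leaves implicit.
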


The convergence rate is $N^{-1/\lambda}$, where $N=2^m$, with $\lambda$ arbitrarily close to $1/(1+\beta)$. In the example in Section~\ref{sec:ex:ass:A} we consider integration with respect to $\varphi=\varphi_{\sigma}$, the normal PDF with mean 0 and variance $\sigma^2$, and we choose $\psi=\omega=\varphi_\rho$ with $\rho\ge \sigma$. In this case we will be able to choose $\beta=1-\sigma^2/\rho^2-\delta$ where $\delta \in (0,1-\sigma^2/\rho^2)$ can be chosen arbitrarily close to 0 (see Proposition~\ref{pr:beta}). Thus, $1+\beta=2-\sigma^2/\rho^2-\delta$. In this case we obtain a convergence rate of order $N^{-2+\sigma^2/\rho^2+\delta}$ where $\delta$ can be arbitrarily close to~0. A further example is listed in Section~\ref{sec:ex:ass:A} (see Proposition~\ref{pr:beta3}).

If, for some $\lambda \in (\frac{1}{1+\beta},1]$, we have 
\begin{equation}\label{cond:w:spt}
\Gamma_{\beta,\lambda}:=\sup_{s \in \NN} \sum_{\emptyset \not= \uu \subseteq [s]} \gamma_{\uu}^\lambda K_{\beta,\lambda}^{|\uu|}< \infty,
\end{equation}
then we have 
$${\rm wce}_{\infty,\bsgamma,\varphi,\psi,\omega}(\cP_{m,s}^{{\rm sh}})\le  \left( \frac{4 \Gamma_{\beta,\lambda}}{2^m}\right)^{1/\lambda},$$ 
and this bound holds uniformly in $s$. In particular, for $\varepsilon \in (0,1)$ and $s \in \NN$ define the so-called information-complexity 
\[
N(\varepsilon,s):=\min\{N \in \NN \ : \ \exists\, \cP_N \subseteq (0,1)^s \mbox{ s.t. $|\cP_N|=N$ and } {\rm wce}_{\infty,\bsgamma,\varphi,\psi,\omega}(\cP_N) \le \varepsilon\}.
\] 
Then, for $m \in \NN$ such that $2^{m-1} < 4 \Gamma_{\beta,\lambda} \varepsilon^{-\lambda} \le 2^m$ we have ${\rm wce}_{\infty,\bsgamma,\varphi,\psi,\omega}(\cP_{m,s}^{{\rm sh}}) \le \varepsilon$ and hence 
$$N(\varepsilon,s) \le 2^m \le 8 \Gamma_{\beta,\lambda} \varepsilon^{-\lambda},$$ 
and this bound holds uniformly in $s$. In the field of information-based complexity, such a behaviour is called {\it strong polynomial tractability}, see \cite{NW08,NW10,NW12}. In addition, criteria for other notions of tractability, such as, e.g., polynomial- or weak tractability, can be easily deduced. For example for product weights $\gamma_{\uu}=\prod_{j \in \uu}\gamma_j$ for positive reals $\gamma_j$, $j \in \NN$, condition \eqref{cond:w:spt} is equivalent to $$\sum_{j=1}^{\infty} \gamma_j^{\lambda} < \infty.$$

\begin{remark}
The component-by-component construction from Section~3.4 of \cite{DKLNS14} can be used in the same manner with $b = 2$ and $\alpha = 1+\beta$. In order to be able to efficiently calculate the error criterion $B_s$ we note that 
\begin{align*}
\sum_{\bsk_\vv \in P^\perp_{\vv}(\bsq)} \frac{1}{2^{ (1+\beta) \mu(\bsk_{\vv}, \bszero)}} = & \frac{1}{2^m} \sum_{n=0}^{2^m-1} \sum_{\bsk \in \NN^{|\vv|}} \frac{1}{2^{(1+\beta) \mu(\bsk)}}  \wal_{\bsk}  (\bsy_{n,\vv})=\frac{1}{2^m} \sum_{n=0}^{2^m-1} \prod_{j \in \vv} \Theta(y_n^{(j)}),
\end{align*}
where 
\begin{align*}
\Theta(x) := & \sum_{k=1}^{\infty} \frac{1}{2^{(1+\beta) \mu(k)}}  \wal_{k}  (x) 
= \frac{1-2^{\beta \lfloor \log_2(x)\rfloor} (2^{1+\beta}-1)}{2^{1+\beta}(2^{1+\beta}-2)}.
\end{align*}
Compare with Section~3.4 of \cite{DKLNS14}. Note that the components $y_n^{(j)}$ are exclusively of the form $\frac{k}{2^{2m}}+\frac{1}{2^{2m +1}}$ for $k \in \{0,1,\ldots,2^{2m}-1\}$. Thus, the values of $\Theta(\frac{k}{2^{2m}}+\frac{1}{2^{2m +1}})$ for $k \in \{0,1,\ldots,2^{2m}-1\}$ can be pre-computed and stored in a lookup table.
\end{remark}

\section{Weight functions satisfying Assumption~\ref{assumptionA}}\label{sec:ex:ass:A}

In this section we provide two examples of weight functions that satisfy Assumption~\ref{assumptionA}, which we will be useful for PDEs with a finite number of lognormal random coefficients, as shown in Section~\ref{sec:Appl}. For $k \in N_2 \setminus N_1$ we want to estimate
\begin{align}\label{AssA:int}
\sup_{x \in \mathbb{R}} \frac{|\widetilde{W}^{(\varphi)}_{k, m, 2}(x)|}{\varphi(x) \psi(x)} \qquad \mbox{and} \qquad \sup_{x \in \mathbb{R}} \frac{|\widetilde{W}_{k,m,2}^{(\varphi)}(x)\varphi'(x)|}{\varphi^2(x) \omega(x)}. 
\end{align}
Both suprema share the same general form 
$$\Lambda(k,\varphi,\chi) :=\sup_{x \in \mathbb{R}} \frac{|\widetilde{W}^{(\varphi)}_{k, m, 2}(x)|}{\chi(x)}$$ with a weight function $\chi:\RR \rightarrow (0,\infty)$. As a starting point, we introduce a convenient formula in the next lemma.

\begin{lemma}\label{lem:gen:Lambda}
Assume that $\chi: \RR \rightarrow (0,\infty)$ is symmetric around 0, monotonically increasing over $(-\infty,0]$ and monotonically decreasing over $[0,\infty)$. For $k \in N_2 \setminus N_1$ of the form $k = 2^{a_1} + 2^{a_2}$ with $a_1 > a_2 \ge 0$ we have
$$\Lambda(k,\varphi, \chi) \le \frac{1}{2^{a_1+a_2/2+1}} \sup_{0 \le x \le 1/2} \frac{x}{\chi(\Phi^{-1}(\frac{x}{2^{a_2}}))}$$
where the CDF $\Phi$ is defined in \eqref{def:Phi} and $\Phi^{-1}$ is its inverse.
\end{lemma}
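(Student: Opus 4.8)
The plan is to compute the cumulated functions $W_{k,m,2}$ and $\widetilde{W}_{k,m,2}$ explicitly on their support $I(m,a_2)=[\tfrac{m}{2^{a_2}},\tfrac{m+1}{2^{a_2}})$ (recall Lemma~\ref{le:supp:W}), then convert the supremum defining $\Lambda(k,\varphi,\chi)$ into a supremum over $t=\Phi(x)$ ranging in $I(m,a_2)$ via \eqref{cum:Wphi2}, and finally bound the denominator $\chi(\Phi^{-1}(t))$ from below using the monotonicity and symmetry of $\chi$.

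For the first step, write $c:=\tfrac{m}{2^{a_2}}$ and note that on $I(m,a_2)$ we have $w_{k,m,2}=2^{a_2/2}\wal_k$, so $W_{k,m,2}(x)=2^{a_2/2}\int_c^x\wal_k(y)\rd y$ there. Since $2^{a_1}<k<2^{a_1+1}$, Lemma~\ref{leWal} applies with $r=a_1$, and because $2^{a_1}c=m\,2^{a_1-a_2}\in\mathbb{Z}$ the lower boundary term vanishes ($\eta$ being $1$-periodic with $\eta(0)=0$). This yields $W_{k,m,2}(x)=2^{a_2/2}2^{-a_1}\wal_k(x)\,\eta(2^{a_1}x)$ and hence the pointwise bound $|W_{k,m,2}(x)|\le 2^{a_2/2}/2^{a_1+1}$ from $|\eta|\le\tfrac12$. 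Integrating once more, and using that $\widetilde{W}_{k,m,2}$ vanishes at both endpoints of $I(m,a_2)$ (Lemma~\ref{le:supp:W} and continuity) while $|\widetilde{W}_{k,m,2}'|=|W_{k,m,2}|\le 2^{a_2/2}/2^{a_1+1}$, integration from the nearer endpoint gives the triangle-type estimate
\begin{equation*}
|\widetilde{W}_{k,m,2}(t)|\le \frac{2^{a_2/2}}{2^{a_1+1}}\,\min\!\left(t-c,\;c+2^{-a_2}-t\right)=:\frac{2^{a_2/2}}{2^{a_1+1}}\,d(t),\qquad t\in I(m,a_2),
\end{equation*}
with $d(t)\in[0,2^{-a_2-1}]$.

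Next, by \eqref{cum:Wphi2} the substitution $t=\Phi(x)$ turns $\Lambda(k,\varphi,\chi)$ into $\sup_{t\in I(m,a_2)}|\widetilde{W}_{k,m,2}(t)|/\chi(\Phi^{-1}(t))$. I set $x:=2^{a_2}d(t)\in[0,\tfrac12]$, so that $d(t)=x/2^{a_2}$ and the numerator estimate reads $|\widetilde{W}_{k,m,2}(t)|\le x/2^{a_1+a_2/2+1}$, which already produces the prefactor in the claim. It remains to show $\chi(\Phi^{-1}(t))\ge\chi(\Phi^{-1}(x/2^{a_2}))$; this denominator comparison is where the hypotheses on $\chi$ enter and is, I expect, the main obstacle. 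From $c\ge 0$ and $c+2^{-a_2}\le 1$ one gets $d(t)\le t\le 1-d(t)$, so by monotonicity of $\Phi^{-1}$ the value $\Phi^{-1}(t)$ lies between $\Phi^{-1}(d(t))$ and $\Phi^{-1}(1-d(t))$. Since $\chi$ is symmetric about $0$ and unimodal, $\chi\circ\Phi^{-1}$ attains its minimum on an interval at an endpoint, whence $\chi(\Phi^{-1}(t))\ge\min(\chi(\Phi^{-1}(d(t))),\chi(\Phi^{-1}(1-d(t))))$.

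Finally, the right endpoint is folded onto the left using symmetry: for the (symmetric) densities treated in Section~\ref{sec:ex:ass:A} one has $\Phi^{-1}(1-s)=-\Phi^{-1}(s)$, so that $\chi(\Phi^{-1}(1-d(t)))=\chi(\Phi^{-1}(d(t)))$ and the minimum above equals $\chi(\Phi^{-1}(x/2^{a_2}))$. Combining the numerator and denominator estimates gives $|\widetilde{W}_{k,m,2}(t)|/\chi(\Phi^{-1}(t))\le \tfrac{1}{2^{a_1+a_2/2+1}}\,x/\chi(\Phi^{-1}(x/2^{a_2}))$; taking the supremum over $t\in I(m,a_2)$, hence over the admissible $x\in[0,\tfrac12]$, yields the stated bound. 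The delicate point is precisely this comparison across the two tails of the interval: without the symmetry of $\chi$ (and of $\Phi$ about its median) the right-hand tail need not be controlled by the left-tail supremum appearing in the claim, so verifying that the two endpoint contributions coincide is the crux of the argument.
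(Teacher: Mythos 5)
Your proof is correct, and it takes a genuinely different route from the paper's. The paper first establishes the scaling identities $\widetilde{W}_{2^{a_1}+2^{a_2},0,2}(x)=2^{-3a_2/2}\,\widetilde{W}_{2^{a_1-a_2}+1,0,2}(2^{a_2}x)$, reduces the maximum over $m$ to the single case $m=0$ by translation, rescales $I(0,a_2)$ to $[0,1]$, and only then bounds $|\widetilde{W}_{2^{a_1-a_2}+1,0,2}(x)|\le x\sup|W_{2^{a_1-a_2}+1,0,2}|$ with the one-sided factor $x$, folding $[1/2,1]$ onto $[0,1/2]$ by symmetry at the very end. You instead work directly on each $I(m,a_2)$: the same invocation of Lemma~\ref{leWal} (your observation that $\eta(2^{a_1}m/2^{a_2})=0$ because $2^{a_1}m/2^{a_2}\in\mathbb{Z}$ is the clean way to see that integration may start at $m/2^{a_2}$) gives $|W_{k,m,2}|\le 2^{a_2/2}/2^{a_1+1}$, and integrating from the \emph{nearer} endpoint yields the two-sided bound $|\widetilde{W}_{k,m,2}(t)|\le 2^{a_2/2}2^{-a_1-1}\min(t-c,\,c+2^{-a_2}-t)$, after which the substitution $x=2^{a_2}d(t)$ and the chain $d(t)\le t\le 1-d(t)$ together with unimodality of $\chi\circ\Phi^{-1}$ deliver the denominator comparison. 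What your version buys is that the reduction to $m=0$ (which the paper states as an equality with a rather terse justification) is replaced by an estimate that is manifestly uniform in $m$, and the tail comparison is made fully explicit. What the paper's version buys is the explicit scaling identity \eqref{eq_w_factor}, which it reuses in Remark~\ref{rem_w_local} to explain why classical Walsh functions cannot be substituted. One point worth noting: both arguments need $\Phi^{-1}(1-s)=-\Phi^{-1}(s)$, i.e.\ symmetry of $\varphi$ about $0$, which is not among the stated hypotheses of the lemma (only $\chi$ is assumed symmetric); the paper uses it implicitly when asserting that $\chi\circ\Phi^{-1}$ is symmetric around $1/2$, and you correctly flag it as the crux. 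Since all densities treated in Section~\ref{sec:ex:ass:A} are symmetric, this is harmless in context, but your proof is on exactly the same footing as the paper's in this respect — there is no gap beyond what the paper itself leaves implicit.
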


\begin{proof}
We begin with a brief preliminary consideration. For $a_1 > a_2 \ge 0$ and $x \in \RR$ we have 
\begin{equation}\label{eq_w_factor}
w_{2^{a_1}+2^{a_2},0,2}(x)=2^{a_2/2} w_{2^{a_1-a_2}+1,0,2}(2^{a_2}x)
\end{equation}
and $$W_{2^{a_1}+2^{a_2},0,2}(x)=\frac{1}{2^{a_2/2}} W_{2^{a_1-a_2}+1,0,2}(2^{a_2}x)$$
and $$\widetilde{W}_{2^{a_1}+2^{a_2},0,2}(x)=\frac{1}{2^{3a_2/2}} \widetilde{W}_{2^{a_1-a_2}+1,0,2}(2^{a_2}x).$$ 
Here, the first formula follows easily from the definition. For the second formula, we use the first one and obtain
\begin{align*}
W_{2^{a_1}+2^{a_2},0,2}(x) = & \int_0^x w_{2^{a_1}+2^{a_2},0,2}(y) \rd y =  2^{a_2/2} \int_0^x w_{2^{a_1-a_2}+1,0,2}(2^{a_2}y)  \rd y  \\
= & \frac{1}{2^{a_2/2}} \int_0^{2^{a_2} x} w_{2^{a_1-a_2}+1,0,2}(z)  \rd z
=  \frac{1}{2^{a_2/2}} W_{2^{a_1-a_2}+1,0,2}(2^{a_2}x).
\end{align*}
After another integration we also obtain the third formula.

Now we start with the actual proof of Lemma~\ref{lem:gen:Lambda}. For any $m \in L_{k,2}$  and $x \in I(0,a_2)$ we have
\begin{equation*}
\widetilde{W}_{k,0,2}(x) = \widetilde{W}_{k,m,2}\left( x + \frac{m}{2^{a_2}} \right).
\end{equation*}
Therefore we have
\begin{align*}
\Lambda(k,\varphi, \chi) = &  \max_{m \in L_{k,2}} \sup_{\Phi(x) \in I(m,a_2)} \frac{|\widetilde{W}_{k,m,2}(\Phi(x))|}{\chi(x)} \\ 
= & \max_{m \in L_{k,2}} \sup_{x \in I(m,a_2)} \frac{|\widetilde{W}_{k,m,2}(x)|}{\chi(\Phi^{-1}(x))} \\ 
= & \max_{m \in L_{k,2}} \sup_{x \in I(0,a_2)}  \frac{|\widetilde{W}_{k,m,2}( x + \frac{m}{2^{a_2}})|}{\chi(\Phi^{-1}(x))} \\
= & \max_{m \in L_{k,2}} \sup_{x \in I(0,a_2)}  \frac{|\widetilde{W}_{k,0,2}(x)|}{\chi(\Phi^{-1}(x))} \\
= & \sup_{x \in I(0,a_2)} \frac{|\widetilde{W}_{k,0,2}(x)|}{\chi(\Phi^{-1}(x))},
\end{align*}
where the last part follows from the assumption that $\chi$ is symmetric around $0$ and monotonically increasing over $(-\infty,0]$. Now the identities formulated at the beginning imply 
\begin{align*}
\Lambda(k,\varphi, \chi) = & \frac{1}{2^{3 a_2/2}} \sup_{x \in I(0,a_2)} \frac{|\widetilde{W}_{2^{a_1-a_2} + 1,0,2}(x2^{a_2})|}{\chi(\Phi^{-1}(x))} \\ = &  \frac{1}{2^{3 a_2/2}} \sup_{0 \le x < 1} \frac{|\widetilde{W}_{2^{a_1-a_2} + 1,0,2}(x)|}{\chi(\Phi^{-1}(\frac{x}{2^{a_2}}))} \\ = & \frac{1}{2^{3 a_2/2}} \sup_{0 \le x \le 1/2} \frac{|\widetilde{W}_{2^{a_1-a_2} + 1,0,2}(x)|}{\chi(\Phi^{-1}(\frac{x}{2^{a_2}}))},
\end{align*}
where the last inequality follows from the fact that $\chi \circ \Phi^{-1}$ and $\widetilde{W}_{2^{a_1-a_2}+1,0,2}$ are symmetric around $1/2$.

Finally, we also have 
$$|\widetilde{W}_{2^{a_1-a_2}+1, 0, 2}(x)| = \left|\int_0^x   W_{2^{a_1-a_2}+1, 0, 2}(y) \rd y\right| \le x \sup_{y \in [0,1]}   |W_{2^{a_1-a_2}+1, 0, 2}(y)|$$
and further, according to Lemma~\ref{leWal},  
\begin{align*}
\sup_{y \in [0,1]} |W_{2^{a_1-a_2}+1, 0, 2}(y)| =& \sup_{y \in [0,1]} \left|\int_0^y w_{2^{a_1-a_2}+1, 0, 2}(t) \rd t\right|\\
=& \sup_{y \in [0,1]} \left|\int_0^y \wal_{2^{a_1-a_2}+1}(t) \rd t\right| = \frac{1}{2^{a_1-a_2+1}}.
\end{align*}
This yields
$$\Lambda(k,\varphi, \chi) \le \frac{1}{2^{a_1+a_2/2+1}} \sup_{0 \le x \le 1/2} \frac{x}{\chi(\Phi^{-1}(\frac{x}{2^{a_2}}))},$$ as claimed.
\end{proof}

\begin{remark} \label{rem_w_local}
In \eqref{eq_w_factor} we use the fact that the functions $w_{k,0,2}$ are localized. This equality does not hold for classical Walsh functions.
\end{remark}

Now we discuss two concrete examples.

\subsection{Normal distribution} Now we give our main concrete example of weight functions satisfying Assumption~\ref{assumptionA}. For $\sigma>0$ let 
\begin{equation}\label{def:phisigma}
\varphi_\sigma(x) :=\frac{1}{\sqrt{2 \pi} \sigma} \, {\rm e}^{-x^2/(2 \sigma^2)}\qquad \mbox{for $x \in \RR$,}
\end{equation}
be the Gaussian PDF with mean 0 and standard deviation $\sigma$. We write $\Phi_\sigma$ for the corresponding CDF and $\Phi_\sigma^{-1}$ for its inverse.

Assume that
\begin{equation*}
\varphi(x) = \varphi_\sigma(x)= \frac{1}{\sqrt{2\pi} \sigma} \, \mathrm{e}^{-x^2 / (2 \sigma^2) },
\end{equation*}
then 
\begin{equation*}
\varphi'(x)=\varphi'_\sigma(x) = -\frac{x}{\sqrt{2\pi} \sigma^3 } \, \mathrm{e}^{-x^2/ (2 \sigma^2) } =- \frac{x}{\sigma^2} \varphi_\sigma(x).
\end{equation*}
In this case we have $$\frac{|\varphi_\sigma'(x)|}{\varphi_\sigma^2(x)}=\frac{|x|}{\sigma^2} \frac{1}{\varphi_\sigma(x)}.$$ Note that we have $\Phi^{-1}_\sigma(z) \le 0$ if $z \in (0, 1/2]$ and $\Phi^{-1}_\sigma(z) \ge 0$ if $z \in [1/2,1)$.

In the following we choose 
\begin{equation}\label{eq_psi_1}
\psi=\omega=\frac{\varphi_\rho}{\varphi_\sigma}\qquad \mbox{with $\rho\ge \sigma$.} 
\end{equation}
Considering \eqref{AssA:int}, we are, therefore, interested in the quantities
\begin{align*}
\Lambda(k,\varphi_{\sigma}, \varphi_{\rho})=\sup_{x \in \mathbb{R}} \frac{|\widetilde{W}^{(\varphi_\sigma)}_{k, m, 2}(x)|}{\varphi_\rho(x)} \qquad \mbox{and} \qquad \Lambda\left(k,\varphi_{\sigma}, \frac{\varphi_{\sigma} \varphi_{\rho}}{|\varphi_{\sigma}'|}\right)=\frac{1}{\sigma^2}\sup_{x \in \mathbb{R}}  \frac{|x\,\widetilde{W}_{k,m,2}^{(\varphi_{\sigma})}(x)|}{\varphi_\rho(x)}. 
\end{align*}
According to Lemma~\ref{lem:gen:Lambda} we have
\begin{align*}
\Lambda(k,\varphi_{\sigma}, \varphi_{\rho}) \le & \frac{1}{2^{a_1+a_2/2+1}} \sup_{0 \le x \le 1/2} \frac{x}{\varphi_\rho(\Phi^{-1}_\sigma(\frac{x}{2^{a_2}}))} \\ = &  \frac{\sqrt{2 \pi} \, \rho}{2^{a_1+a_2/2+1}} \sup_{0 \le x \le 1/2} x \, {\rm e}^{(\Phi^{-1}_\sigma(\frac{x}{2^{a_2}}))^2/(2 \rho^2)}.
\end{align*}
Next, we use the estimate\footnote{The proof of this estimate is an easy calculation and can be found, for example, in \citet[pp.~332]{DKP22}.} 
\begin{equation*}
0 \le -\Phi_{\sigma}^{-1}(t) \le \sqrt{-2 \sigma^2 \log t} \qquad \mbox{for all $t \in (0,1/2)$,}
\end{equation*}
from which we obtain $(\Phi_{\sigma}^{-1}(t))^2 \le 2 \sigma^2 \log \frac{1}{t}$ for $t \in (0,1/2)$. Hence, 
$${\rm e}^{(\Phi^{-1}_\sigma(\frac{x}{2^{a_2}}))^2/(2\rho^2)} \le {\rm e}^{\tfrac{2 \sigma^2}{2\rho^2} \log \left(\frac{2^{a_2}}{x}\right)}=\frac{2^{a_2 \sigma^2 / \rho^2}}{x^{\sigma^2 / \rho^2}}.$$ This yields
\begin{eqnarray}\label{eq_beta1}
\Lambda(k,\varphi_{\sigma}, \varphi_{\rho}) & \le &  \frac{\sqrt{2 \pi} \, \rho}{2^{a_1+a_2/2+1}} \sup_{0 \le x \le 1/2} x \, \frac{2^{a_2 \sigma^2 / \rho^2}}{x^{\sigma^2/\rho^2}}\nonumber\\
& = & \frac{\sqrt{\pi/2} \, \rho}{2^{a_1+a_2(1/2- \sigma^2 /\rho^2)}} \sup_{0 \le x \le 1/2} x^{1- \sigma^2 / \rho^2}\nonumber\\
& = & \frac{2^{\sigma^2 / \rho^2} \sqrt{\pi/8} \, \rho}{2^{a_1+a_2(1/2-\sigma^2/\rho^2)}}\nonumber\\
& = & 2^{\sigma^2 / \rho^2} \sqrt{\pi/8} \, \rho\, \frac{2^{a_2/2}}{2^{a_1+\beta_1 a_2}} \qquad \mbox{with $\beta_1:=1- \frac{\sigma^2}{\rho^2}$.}
\end{eqnarray}
Again, according to Lemma~\ref{lem:gen:Lambda} we have
\begin{equation*}
\Lambda\left(k,\varphi_\sigma, \frac{\varphi_{\sigma} \varphi_{\rho}}{|\varphi_{\sigma}'|}\right) = \frac{1}{2^{a_1+a_2/2+1}} \frac{1}{\sigma^2}\sup_{0 \le x \le 1/2} \frac{x}{\varphi_\rho(\Phi_{\sigma}^{-1}(\frac{x}{2^{a_2}}))} \left|\Phi_{\sigma}^{-1}\left(\frac{x}{2^{a_2}}\right)\right|.
\end{equation*}
Now we proceed in a similar way to the previous case in order to obtain 
\begin{eqnarray*}
\Lambda\left(k,\varphi_\sigma, \frac{\varphi_{\sigma} \varphi_{\rho}}{|\varphi_{\sigma}'|}\right) & \le & \frac{\sqrt{2 \pi}\, \rho}{2^{a_1+a_2/2+1}} \frac{1}{\sigma^2} \sup_{0 \le x \le 1/2} x \frac{2^{a_2 \sigma^2/\rho^2}}{x^{\sigma^2/\rho^2}} \sqrt{2 \log \left(\frac{2^{a_2}}{x}\right)}\\
& = & \frac{\sqrt{\pi} \, \rho}{2^{a_1+a_2(1/2-\sigma^2/\rho^2)}} \frac{1}{\sigma^2} \sup_{0 \le x \le 1/2} x^{1-\sigma^2/\rho^2} \sqrt{\log \left(\frac{2^{a_2}}{x}\right)}.
\end{eqnarray*}
For every $\delta>0$ we have $\sqrt{\log y} \le (2 \delta \mathrm{e})^{-1/2} y^{\delta}$ for all $y \ge 1$. Hence
\begin{eqnarray}\label{eq_beta2}
\Lambda\left(k,\varphi_{\sigma},\frac{\varphi_{\sigma} \varphi_{\rho}}{|\varphi_{\sigma}'|}\right) & \le & \frac{ \sqrt{\pi} \, \rho}{\sqrt{2 \delta \mathrm{e} }  \, \sigma^2 2^{a_1+a_2(1/2-\sigma^2/\rho^2-\delta)}} \sup_{0 \le x \le 1/2} x^{1-\sigma^2/\rho^2-\delta}\nonumber\\ 
& = & \frac{ \sqrt{\pi} \rho \, 2^{\sigma^2/\rho^2-1+\delta}}{ \sqrt{2\delta \mathrm{e}} \, \sigma^2 2^{a_1+a_2(1/2-\sigma^2/\rho^2-\delta)}}\nonumber\\
& = & \frac{ \sqrt{\pi} \rho \, 2^{\sigma^2/\rho^2-1+\delta}}{ \sqrt{2 \delta \mathrm{e}} \, \sigma^2} \frac{2^{a_2/2}}{2^{a_1+\beta_2 a_2}} \qquad \mbox{for $\beta_2:=1-\frac{\sigma^2}{\rho^2}-\delta$,}
\end{eqnarray}
where the penultimate equality holds true for every $\delta \in (0,1-\frac{\sigma^2}{\rho^2})$. 
Combining \eqref{eq_beta1} and \eqref{eq_beta2} we obtain: 

\begin{proposition}\label{pr:beta}
Let $\varphi$ be the probability density function in \eqref{def:int} and $\psi, \omega$ be the weight functions used in the definition of the norm \eqref{eq_uvnorm}, \eqref{Fnorm:infty}. Let $\varphi=\varphi_\sigma$ and $\psi=\omega=\frac{\varphi_\rho}{\varphi_\sigma}$ with $\rho \ge \sigma$. Then the corresponding $\beta$ in Assumption~\ref{assumptionA} may be chosen to be 
\begin{equation*}
\beta=1-\frac{\sigma^2}{\rho^2}-\delta
\end{equation*}
where $\delta\in (0,1-\frac{\sigma^2}{\rho^2})$ can be chosen arbitrarily close to 0, and where the constant $C > 0$ in Assumption~\ref{assumptionA} can be chosen to be
\begin{equation*}
C = \max\left( 2^{\sigma^2/\rho^2} \sqrt{\pi/8} \rho, \frac{ \sqrt{\pi} \rho 2^{\sigma^2/\rho^2 - 1 + \delta} }{ \sqrt{2\delta \mathrm{e} } \, \sigma^2}  \right).
\end{equation*}
\end{proposition}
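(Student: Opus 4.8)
The plan is to observe that, under the stated choice $\psi=\omega=\varphi_\rho/\varphi_\sigma$, the two suprema required in Assumption~\ref{assumptionA} are \emph{exactly} the quantities $\Lambda(k,\varphi_\sigma,\varphi_\rho)$ and $\Lambda(k,\varphi_\sigma,\tfrac{\varphi_\sigma\varphi_\rho}{|\varphi_\sigma'|})$ that were already estimated in \eqref{eq_beta1} and \eqref{eq_beta2}, so that no new computation is needed beyond assembling those two displays. Indeed, since $\varphi(x)\psi(x)=\varphi_\sigma(x)\cdot\varphi_\rho(x)/\varphi_\sigma(x)=\varphi_\rho(x)$, the first supremum in Assumption~\ref{assumptionA} coincides with $\Lambda(k,\varphi_\sigma,\varphi_\rho)$; and since $\varphi^2(x)\omega(x)/|\varphi'(x)|=\varphi_\sigma(x)\varphi_\rho(x)/|\varphi_\sigma'(x)|$, the second supremum coincides with $\Lambda(k,\varphi_\sigma,\tfrac{\varphi_\sigma\varphi_\rho}{|\varphi_\sigma'|})$.

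Next I would record the exponents delivered by those two estimates. From \eqref{eq_beta1}, the first supremum satisfies the bound of Assumption~\ref{assumptionA} with exponent $\beta_1=1-\sigma^2/\rho^2$ and constant $2^{\sigma^2/\rho^2}\sqrt{\pi/8}\,\rho$; from \eqref{eq_beta2}, the second supremum satisfies it with the strictly smaller exponent $\beta_2=1-\sigma^2/\rho^2-\delta$ and constant $\tfrac{\sqrt{\pi}\,\rho\,2^{\sigma^2/\rho^2-1+\delta}}{\sqrt{2\delta\mathrm{e}}\,\sigma^2}$, valid for every $\delta\in(0,1-\sigma^2/\rho^2)$. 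For this range of $\delta$ we have $\beta_2\in(0,1-\sigma^2/\rho^2)\subseteq(0,1]$, so $\beta_2$ is an admissible value of $\beta$.

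The key reconciling step is that Assumption~\ref{assumptionA} demands a single pair $(C,\beta)$ for \emph{both} inequalities. I would therefore set $\beta=\beta_2$, the smaller exponent. Since $\beta_2\le\beta_1$, one has $2^{-(a_1+\beta_1 a_2)}\le 2^{-(a_1+\beta_2 a_2)}$, so the first bound \eqref{eq_beta1} only weakens and hence remains valid when its exponent $\beta_1$ is replaced by $\beta_2$ (with the same constant); the second bound already uses $\beta_2$. Taking $C$ to be the maximum of the two constants then makes both inequalities of Assumption~\ref{assumptionA} hold simultaneously with the common parameters, giving precisely the claimed $\beta$ and $C$.

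The only genuine subtlety, already absorbed into \eqref{eq_beta2}, is the extra factor $|\Phi_\sigma^{-1}(x/2^{a_2})|$ present in the second supremum but absent in the first: after the Gaussian tail bound $|\Phi_\sigma^{-1}(t)|\le\sqrt{-2\sigma^2\log t}$ this contributes a $\sqrt{\log(\cdot)}$ term, and controlling it via $\sqrt{\log y}\le(2\delta\mathrm{e})^{-1/2}y^{\delta}$ is exactly what forces the loss of $\delta$ in the exponent and thus the gap $\beta_2<\beta_1$ that the final combination step must absorb.
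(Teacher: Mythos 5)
Your proposal is correct and matches the paper's argument, which likewise obtains the Proposition by combining \eqref{eq_beta1} and \eqref{eq_beta2}; your explicit reconciliation step (passing from $\beta_1$ to the smaller $\beta_2$, noting the first bound only weakens since $2^{-(a_1+\beta_1 a_2)}\le 2^{-(a_1+\beta_2 a_2)}$ for $a_2\ge 0$, and taking the maximum of the two constants) is exactly what the paper leaves implicit in the word ``combining.''
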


\subsection{Student's $t$-distribution} 

The PDF of Student's $t$-distribution with $\nu$ degrees of freedom ($\nu>0$) is given by $$f_{\stu,\nu}(x):=\frac{c_\nu}{\left(1+\frac{x^2}{\nu}\right)^{(\nu+1)/2}}\qquad \mbox{for $x\in \RR$,}$$ where the normalizing factor is $c_\nu:=\frac{\Gamma\left(\frac{\nu+1}{2}\right)}{\sqrt{\pi \nu} \, \Gamma\left(\frac{\nu}{2}\right)}$ for $\nu>0$. Let $\Phi_{\stu,\nu}$ denote the corresponding CDF. The special case $\nu=1$ covers the Cauchy distribution case. In the limit case $\nu=\infty$ we have $f_{\stu,\infty}(x)=\frac{1}{\sqrt{2 \pi}} {\rm e}^{-x^2/2}$, the PDF of the standard normal distribution.

We consider the case where $\varphi(x)=f_{\stu,\nu}(x)$. Choosing $\psi(x)=f_{\stu,\nu}^{\eta-1}(x)$ where $\eta \in [0,\tfrac{\nu}{\nu+1}]$ we obtain $$\sup_{x \in \mathbb{R}} \frac{|\widetilde{W}^{(\varphi)}_{k, m, 2}(x)|}{\varphi(x) \psi(x)}=\sup_{x \in \mathbb{R}} \frac{|\widetilde{W}^{(f_{\stu,\nu})}_{k, m, 2}(x)|}{f_{\stu,\nu}^{\eta}(x)}=\Lambda(k,f_{\stu,\nu},f_{\stu,\nu}^{\eta}).$$ Using Lemma~\ref{lem:gen:Lambda} with $\chi = f_{\stu,\nu}^\eta$ we obtain  
\begin{equation}\label{eq:stud1}
\sup_{x \in \mathbb{R}} \frac{|\widetilde{W}^{(\varphi)}_{k, m, 2}(x)|}{\varphi(x) \psi(x)} \le \frac{1}{2^{a_1+a_2/2 + 1}} \sup_{0 \le x \le 1/2} \frac{x}{\left(f_{\stu,\nu}\left(\Phi_{\stu,\nu}^{-1}\left( \frac{x}{2^{a_2}} \right)\right)\right)^{\eta}}.
\end{equation}

We require some estimates and relations. For $\nu>0$ let $$f_{\rat,\nu}(x):=\frac{\nu}{2} \frac{1}{(1+|x|)^{\nu+1}}\qquad \mbox{for $x\in \RR$,}$$ be a rational PDF and let $\Phi_{\rat,\nu}$ denote the corresponding CDF. From \cite[Eq.~(15)]{KSWW10} we know that $$\Phi_{\rat,\nu}^{-1}\left(\frac{u}{A_\nu}\right) \le \Phi_{\stu,\nu}^{-1}(u) \qquad \mbox{for $u \in (0,1)$},$$ where $A_\nu:= \frac{2 c_\nu}{\nu} \, (\nu+1)^{(\nu+1)/2} > 1$. Furthermore, from \cite[Table~1]{KSWW10} we know that for $u \in (0,\tfrac{1}{2}]$ we have $$\Phi_{\rat,\nu}^{-1}(u)=1-(2 u)^{-1/\nu}.$$

Now we continue estimating \eqref{eq:stud1}. For $x \in [0,\frac{1}{2}]$ we have $\frac{x}{2^{a_2}}\le \frac{1}{2}$ and therefore $\Phi^{-1}_{\stu,\nu}(\frac{x}{2^{a_2}}) \le 0$. Thus $$\Phi_{\rat,\nu}^{-1}\left(\frac{x}{A_\nu \, 2^{a_2}}\right) \le \Phi_{\stu,\nu}^{-1}\left(\frac{x}{2^{a_2}}\right)\le 0.$$ Consequently, $$\left(\Phi_{\stu,\nu}^{-1}\left(\frac{x}{2^{a_2}}\right)\right)^2 \le \left(\Phi_{\rat,\nu}^{-1}\left(\frac{x}{A_\nu \, 2^{a_2}}\right) \right)^2=\left(1-\left(\frac{A_\nu \, 2^{a_2}}{2 x}\right)^{1/\nu}\right)^2.$$ Therefore, for $x \in (0,\frac{1}{2}]$ we have 
\begin{eqnarray*}
\frac{x}{\left(f_{\stu,\nu}\left(\Phi_{\stu,\nu}^{-1}\left( \frac{x}{2^{a_2}} \right)\right)\right)^{\eta}} & = & \frac{x}{c_{\nu}^{\eta}} \left(1+\frac{1}{\nu} \left(\Phi_{\stu,\nu}^{-1}\left(\frac{x}{2^{a_2}}\right)\right)^2\right)^{\eta (\nu+1)/2}   \\
& \le & \frac{x}{c_{\nu}^{\eta}} \left(1+\frac{1}{\nu} \left(1-\left(\frac{A_\nu \, 2^{a_2}}{2 x}\right)^{1/\nu}\right)^2\right)^{\eta (\nu+1)/2}.
\end{eqnarray*}
For the expression in the brackets we have
\begin{eqnarray*}
1+\frac{1}{\nu} \left(1-\left(\frac{A_\nu \, 2^{a_2}}{2 x}\right)^{1/\nu}\right)^2 & = & 1+\frac{1}{\nu}-\frac{2}{\nu} \left(\frac{A_\nu \, 2^{a_2}}{2 x} \right)^{1/\nu} + \frac{1}{\nu} \left(\frac{A_\nu \, 2^{a_2}}{2 x} \right)^{2/\nu}\\
& \le & 2^{2 a_2/\nu} \left(\frac{\nu+1}{\nu}+\frac{2}{\nu} \left(\frac{A_\nu}{2 x} \right)^{1/\nu} + \frac{1}{\nu} \left(\frac{A_\nu}{2 x} \right)^{2/\nu}\right).
\end{eqnarray*}
Consequently,
\begin{eqnarray*}
\frac{x}{\left(f_{\stu,\nu}\left(\Phi_{\stu,\nu}^{-1}\left( \frac{x}{2^{a_2}} \right)\right)\right)^{\eta}} & \le & \frac{x}{c_{\nu}^{\eta}} \left(2^{2 a_2/\nu} \left(\frac{\nu+1}{\nu}+\frac{2}{\nu} \left(\frac{A_\nu}{2 x} \right)^{1/\nu} + \frac{1}{\nu} \left(\frac{A_\nu}{2 x} \right)^{2/\nu}\right)\right)^{\eta (\nu+1)/2} \\
& = & 2^{a_2 \,  \frac{\nu+1}{\nu}\, \eta} \frac{x}{c_{\nu}^{\eta}} \left(\frac{\nu+1}{\nu}+\frac{2}{\nu} \left(\frac{A_\nu}{2 x} \right)^{1/\nu} + \frac{1}{\nu} \left(\frac{A_\nu}{2 x} \right)^{2/\nu}\right)^{\eta(\nu+1)/2}.
\end{eqnarray*}

Returning to \eqref{eq:stud1}, for $\eta \le \frac{\nu}{\nu+1}$ we obtain
$$\sup_{x \in \mathbb{R}} \frac{|\widetilde{W}^{(\varphi)}_{k, m, 2}(x)|}{\varphi(x) \psi(x)} \le C_{\nu,\eta} \, \frac{1}{2^{a_1+a_2/2}} \, 2^{a_2 \,  \frac{\nu+1}{\nu}\, \eta} = C_{\nu,\eta} \, \frac{2^{a_2/2}}{2^{a_1+a_2(1-\frac{\nu+1}{\nu}\, \eta)}},$$ where 
\begin{equation}\label{def:cnueta}
C_{\nu,\eta}:=\frac{1}{2 c_{\nu}^{\eta}} \sup_{0\le x \le 1/2} x \left(\frac{\nu+1}{\nu}+\frac{2}{\nu} \left(\frac{A_\nu}{2 x} \right)^{1/\nu} + \frac{1}{\nu} \left(\frac{A_\nu}{2 x} \right)^{2/\nu}\right)^{\eta(\nu+1)/2}< \infty,
\end{equation}
for $\eta \le \frac{\nu}{\nu+1}$, because 
$$x \left(\frac{\nu+1}{\nu}+\frac{2}{\nu} \left(\frac{A_\nu}{2 x} \right)^{1/\nu} + \frac{1}{\nu} \left(\frac{A_\nu}{2 x} \right)^{2/\nu}\right)^{\eta(\nu+1)/2}=O\left(x^{1-\frac{\nu+1}{\nu}\eta} \right).$$

Now we treat the second supremum from \eqref{AssA:int}. We have $f_{\stu,\nu}'(x)=-\frac{\nu+1}{\nu} \frac{x}{1+\frac{x^2}{\nu}} f_{\stu,\nu}(x)=-\frac{\nu+1}{\nu} \frac{x}{c_{\nu}^{2/(\nu+1)}} (f_{\stu,\nu}(x))^{1+\frac{2}{\nu+1}}$ and hence, choosing $\varphi=f_{\stu,\nu}$, we have $$\frac{|\varphi'(x)|}{\varphi^2(x)}= \frac{\nu+1}{\nu} \frac{1}{c_{\nu}^{2/(\nu+1)}}\frac{|x|}{(f_{\stu,\nu}(x))^{1-\frac{2}{\nu+1}}}.$$ Choosing $\omega(x)=\frac{\nu+1}{\nu} \frac{1}{c_{\nu}^{2/(\nu+1)}}\frac{|x|}{(f_{\stu,\nu}(x))^{1-\frac{2}{\nu+1}-\eta}}$ we obtain
$$\sup_{x \in \mathbb{R}} \frac{|\widetilde{W}_{k,m,2}^{(\varphi)}(x)\varphi'(x)|}{\varphi^2(x) \omega(x)} = \sup_{x \in \mathbb{R}} \frac{|\widetilde{W}_{k,m,2}^{(f_{\stu,\nu})}(x)|}{f_{\stu,\nu}^{\eta}(x)}=\Lambda(k,f_{\stu,\nu},f_{\stu,\nu}^{\eta}),$$ and we can proceed as above. Thus, we obtain the following proposition:

\begin{proposition}\label{pr:beta3}
Let $\varphi$ be the probability density function in \eqref{def:int} and $\psi, \omega$ be the weight functions used in the definition of the norm \eqref{eq_uvnorm}, \eqref{Fnorm:infty}. Let $\varphi(x)=f_{\stu,\nu}(x)$ and $\psi(x)=f_{\stu,\nu}^{\eta-1}(x)$ and $\omega(x)=\frac{\nu+1}{\nu} \frac{1}{c_{\nu}^{2/(\nu+1)}}|x|f^{\eta-1+\frac{2}{\nu+1}}_{\stu,\nu}(x)$ with $\eta \in [0,\frac{\nu}{\nu+1}]$. Then the corresponding $\beta$ in Assumption~\ref{assumptionA} may be chosen to be $$\beta=1-\frac{\nu+1}{\nu}\, \eta,$$ with a corresponding $C=C_{\nu,\eta}$ given by \eqref{def:cnueta}.
\end{proposition}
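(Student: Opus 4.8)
The plan is to verify the two inequalities of Assumption~\ref{assumptionA} directly, with $\beta = 1-\frac{\nu+1}{\nu}\eta$ and $C=C_{\nu,\eta}$. The key structural observation is that, with the stated choices of $\psi$ and $\omega$, \emph{both} suprema collapse onto the single quantity $\Lambda(k,f_{\stu,\nu},f_{\stu,\nu}^{\eta})$. For the first supremum this is immediate, since $\varphi\psi = f_{\stu,\nu}\,f_{\stu,\nu}^{\eta-1}=f_{\stu,\nu}^{\eta}$. For the second, I would first compute $f_{\stu,\nu}'$ explicitly and rewrite $|\varphi'|/\varphi^2$ as a constant multiple of $|x|\,f_{\stu,\nu}^{-(1-\frac{2}{\nu+1})}$; choosing $\omega$ to equal exactly this factor times $f_{\stu,\nu}^{\eta}$ then cancels the $\varphi'/\varphi^2$ term and again leaves $f_{\stu,\nu}^{\eta}$ in the denominator. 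In this way the entire proposition is reduced to one bound on $\Lambda(k,f_{\stu,\nu},f_{\stu,\nu}^{\eta})$.

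To obtain that bound I would apply Lemma~\ref{lem:gen:Lambda} with $\chi=f_{\stu,\nu}^{\eta}$, which reduces the problem to estimating $\sup_{0\le x\le 1/2}\frac{x}{(f_{\stu,\nu}(\Phi_{\stu,\nu}^{-1}(x/2^{a_2})))^{\eta}}$ up to the explicit prefactor $2^{-(a_1+a_2/2+1)}$. Here I would substitute the closed form $f_{\stu,\nu}(y)=c_\nu(1+y^2/\nu)^{-(\nu+1)/2}$, so that the supremand becomes $c_\nu^{-\eta}\,x\,(1+\tfrac{1}{\nu}(\Phi_{\stu,\nu}^{-1}(x/2^{a_2}))^2)^{\eta(\nu+1)/2}$.

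The main obstacle is that $\Phi_{\stu,\nu}^{-1}$ has no closed form, so the factor $(\Phi_{\stu,\nu}^{-1}(x/2^{a_2}))^2$ must be controlled by elementary means. The resolution is to compare with the rational density $f_{\rat,\nu}$: by \cite[Eq.~(15)]{KSWW10} one has $\Phi_{\rat,\nu}^{-1}(u/A_\nu)\le\Phi_{\stu,\nu}^{-1}(u)$, and for arguments in $(0,1/2]$ both inverses are $\le 0$, so squaring reverses the inequality and gives $(\Phi_{\stu,\nu}^{-1}(x/2^{a_2}))^2\le(\Phi_{\rat,\nu}^{-1}(x/(A_\nu 2^{a_2})))^2$. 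Since $\Phi_{\rat,\nu}^{-1}(u)=1-(2u)^{-1/\nu}$ is explicit on $(0,1/2]$, this yields an upper bound involving powers of $(A_\nu 2^{a_2}/(2x))^{1/\nu}$. Expanding the square and pulling out the dominant factor $2^{2a_2/\nu}$ from the bracket produces, after raising to the power $\eta(\nu+1)/2$, the factor $2^{a_2\frac{\nu+1}{\nu}\eta}$; combined with $2^{-(a_1+a_2/2)}$ this is exactly $2^{a_2/2}/2^{a_1+\beta a_2}$ with $\beta=1-\frac{\nu+1}{\nu}\eta$.

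The final step is to check that the residual supremum defining $C_{\nu,\eta}$ in \eqref{def:cnueta} is finite. This follows because the surviving $x$-dependence behaves like $x^{\,1-\frac{\nu+1}{\nu}\eta}$ as $x\to 0^+$, which stays bounded on $[0,1/2]$ precisely when $1-\frac{\nu+1}{\nu}\eta\ge 0$, i.e.\ $\eta\le\frac{\nu}{\nu+1}$—the restriction assumed in the proposition. Since both inequalities of Assumption~\ref{assumptionA} have been reduced to this same bound on $\Lambda(k,f_{\stu,\nu},f_{\stu,\nu}^{\eta})$, the verification is complete.
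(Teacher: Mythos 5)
Your proposal is correct and follows essentially the same route as the paper: reducing both suprema of Assumption~\ref{assumptionA} to $\Lambda(k,f_{\stu,\nu},f_{\stu,\nu}^{\eta})$ via the explicit form of $f_{\stu,\nu}'$ and the choice of $\omega$, applying Lemma~\ref{lem:gen:Lambda} with $\chi=f_{\stu,\nu}^{\eta}$, controlling $\Phi_{\stu,\nu}^{-1}$ through the comparison with the rational CDF from \cite{KSWW10}, and extracting the factor $2^{a_2\frac{\nu+1}{\nu}\eta}$ to identify $\beta=1-\frac{\nu+1}{\nu}\eta$ with $C_{\nu,\eta}$ finite precisely for $\eta\le\frac{\nu}{\nu+1}$.
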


\section{Application: Elliptic PDEs with log-normal random coefficients}\label{sec:Appl}

In the following, we apply the error bounds to a partial differential equation with a finite number of random log-normal coefficients, see \cite{GKNSSS15} for the analogue case with an infinite number of random coefficients. The formulation of this problem requires many concepts and assumptions which would exceed the scope of this work. We limit ourselves to a concise presentation of the most important points so that our concrete application can be followed. A gentle overview can also be found in Appendix~A of \cite{DKP22}.

\subsection{The PDE problem setting}

Consider the diffusion problem
\begin{equation}\label{def:difprobl}
-\nabla_{\bsx} \cdot (a(\bsx, \omega) \nabla_{\bsx} \, u(\bsx, \omega)) = f(\bsx), \quad \mbox{for almost all $ \omega \in \Omega$ and $\bsx \in D$,}
\end{equation}
subject to the homogeneous Dirichlet boundary condition $u(\bsx,\omega) = 0$ for $\bsx \in \partial D$, where $D$ is a bounded domain in $\RR^d$, $(\Omega, \mathcal{A}, \mathbb{P})$ is a probability space, and $\nabla_{\bsx}=(\partial/\partial x_1,\ldots,\partial/\partial x_d)^{\top}$ is the gradient operator. The function $a$ is a lognormal random field given by
\begin{equation*}
a(\bsx, \omega) = a_\ast(\bsx) + a_0(\bsx) \exp(Z(\bsx, \omega)),
\end{equation*}
where $a_\ast$, $a_0$ are given functions that are continuous on $\overline{D}$ with $a_\ast$ non-negative and $a_0$ strictly positive on $\overline{D}$ and $Z$ is a zero-mean Gaussian random field given by a finite Karhunen-Lo\'eve expansion
\begin{equation*}
Z(\bsx, \omega) = \sum_{j=1}^s \sqrt{\mu_j} \, \xi_j(\bsx) Y_j(\omega), \quad \bsx \in D,
\end{equation*} 
for some fixed $s \in \mathbb{N}$ and with i.i.d. $N(0,1)$-distributed random variables $(Y_j)_{j \ge 1}$. For more background, assumptions and the case where $s= \infty$, see \cite{GKNSSS15}. The focus in the following is on the integration problem arising from approximating the expected value $\EE[G(u_h^s)]$ of a linear functional $G$ of an approximation $u_h^s$ of the solution $u$ of the PDE problem, where the expectation is taken with respect to random $\omega \in \Omega$ ($u_h^s$ is a finite element approximation with mesh width $h$ of the PDE problem, see Section~2 of \cite{GKNSSS15} for details). Using the same assumptions and setting as in \cite{GKNSSS15}, we show that if one uses importance sampling, then we can apply Theorem~\ref{thm:error_bound}. Since we consider a fixed, finite number of random coefficients, we do not include the truncation error, and we do not include the dependence on the dimension of the truncation error. Theorem~\ref{thm:error_bound} shows a convergence order bigger than $1$, but does not allow us to obtain a bound on the integration error of order bigger than $1$ which is also independent of the dimension.

In order to be able to apply Theorem~\ref{thm:error_bound}, we first need bounds on the derivatives up to order $2$ in each variable of a linear functional of the solution $u$ with respect to the random coefficients. In the following, we show an analogue of Theorem~16 in \cite{GKNSSS15} with the modified setting as above and with the norm given by \eqref{Fnorm:infty}.

The bound in Theorem~14 in \cite{GKNSSS15} states that
\begin{equation*}
\left\| \frac{\partial^{|\boldsymbol{\nu}|} u_h^s(\cdot, \bsy)}{\partial \bsy_{\bsnu}} \right\|_V \le \frac{|\boldsymbol{\nu}|!}{(\ln 2)^{|\boldsymbol{\nu}|}} \left( \prod_{j = 1}^s b_j^{\nu_j} \right) \frac{\|f\|_{V'}}{\check{a}(\bsy)},
\end{equation*}
for any vector $\boldsymbol{\nu} = (\nu_1, \nu_2, \ldots, \nu_s ) \in \NN_0^{s}$ with $|\boldsymbol{\nu}| = \sum_{j=1}^s \nu_j$, where $V$ is an appropriate normed function space on $D$, equipped with $\|\cdot\|_V$, and $V'$ is the corresponding dual space, the function $f$ from the right-hand side in \eqref{def:difprobl} belonging to $V'$, and $\check{a}(\bsy) := \min_{\bsx \in \overline{D}} a(\bsx, \bsy)$. 

The integrand of interest is given by
\begin{equation}\label{F_PDE}
F(\bsy) := G(u_h^s(\cdot, \bsy)). 
\end{equation}
By the linearity of $G$ we obtain
\begin{equation*}
\left| \frac{\partial^{|\boldsymbol{\nu}|} F}{\partial \bsy_{\boldsymbol{\nu}}}(\bsy)  \right| \le \|G\|_{V'} \left\| \partial^{\boldsymbol{\nu}} u_h^s(\cdot, \bsy) \right\|_V.
\end{equation*}
Since $\min_{ \bsx \in \overline{D}} \sqrt{\mu_j} \xi_j(\bsx) \ge - b_j$, there is a constant $C > 0$ such that $\frac{1}{\check{a}(\bsy)} \le C \prod_{j=1}^s \exp( b_j |y_j|)$. Thus, we have
\begin{equation}\label{est:norm:assumpt}
\left|\frac{\partial^{|\boldsymbol{\nu}|} F}{\partial \bsy_{\boldsymbol{\nu}}}(\boldsymbol{y}) \right| \le K^\ast \frac{|\boldsymbol{\nu}|!}{(\ln 2)^{|\boldsymbol{\nu}|}} \left( \prod_{j =1}^s b_j^{\nu_j} \right) \left( \prod_{j=1}^s \exp(b_j |y_j|) \right),
\end{equation}
for some constant $K^\ast$, which also absorbs the norms $\|f\|_{V'}$ and $\|G\|_{V'}$.

\subsection{Bound on the integration error for PDEs with log-normal coefficients via importance sampling}

For the integration of the function $F$ \eqref{F_PDE} with respect to $\varphi_1$ (the Gaussian PDF with mean 0 and standard deviation the identity matrix, cf.~\eqref{def:phisigma}) we employ importance sampling using $\varphi_\sigma$ with $\sigma > 1$. Recall that in the multivariate case we consider $\varphi_{\sigma}(\bsx)=\prod_{j=1}^s \varphi_{\sigma}(x_j)$. Then the integrand becomes 
\begin{equation}\label{F_sigma_PDE}
F_\sigma:=F \, \frac{\varphi_1}{\varphi_\sigma}
\end{equation}
and we integrate this function with respect to $\varphi_\sigma$ in each coordinate.

\begin{theorem}\label{thm:pde_app}
Consider the approximation of the integral $$\int_{\mathbb{R}^s} F(\bsy) \varphi_1(\bsy) \,\mathrm{d} \bsy = \int_{\mathbb{R}^s} F_\sigma(\bsy) \varphi_\sigma(\bsy) \,\mathrm{d} \bsy, $$ where $F$  is given by \eqref{F_PDE} and $F_\sigma$ is given by \eqref{F_sigma_PDE}, by an interlaced polynomial lattice rule. Let $\rho \ge \sigma > 1$ be such that $1 + \tfrac{1}{\rho^2} - \tfrac{2}{\sigma^2} > 0$ and choose $\psi=\omega=\varphi_{\rho}/\varphi_{\sigma}$ in the definition of the norm \eqref{eq_uvnorm} and \eqref{Fnorm:infty}.
Then for every $m$ a shifted interlaced polynomial lattice rule of order $2$ with underlying point set $\cP_{s,m}^{{\rm sh}}$ with $N=2^m$ points can be constructed using a component-by-component algorithm minimizing $B_s(\bsq)$ in \eqref{eq:EBsq} such that for all $s \in \NN$, for all $\delta \in (0, 1-\sigma^2/\rho^2)$ and all $\lambda \in (\frac{1}{1+\beta},1]$, where $\beta = 1 - \sigma^2/\rho^2 - \delta$, there exists a quantity $C(s,\delta,\lambda) > 0$, such that the integration error satisfies 
\begin{equation*}
\left| \int_{\mathbb{R}^s} F_\sigma(\bsy) \varphi_\sigma(\bsy) \,\mathrm{d} \bsy - \frac{1}{N} \sum_{\bsx \in \cP_{s,m}^{{\rm sh}}} F_\sigma(\Phi_\sigma^{-1}(\bsy)) \right| \le \frac{C(s,\delta,\lambda)}{N^{1/\lambda}} 
\|F_\sigma\|_{\bsgamma, \varphi_\sigma, \varphi_\rho/\varphi_\sigma, \varphi_\rho/\varphi_\sigma}.
\end{equation*}

\end{theorem}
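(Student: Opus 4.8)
The plan is to apply Theorem~\ref{thm:error_bound} to the integrand $F_\sigma$ with the density $\varphi_\sigma$ and weight functions $\psi=\omega=\varphi_\rho/\varphi_\sigma$. Two ingredients are required: first, that Assumption~\ref{assumptionA} holds for this triple $(\varphi_\sigma,\psi,\omega)$, and second, that $F_\sigma\in\mathcal{F}_{\bsgamma,\varphi_\sigma,\psi,\omega}$, i.e.\ that its norm is finite. The first is exactly Proposition~\ref{pr:beta}, which gives Assumption~\ref{assumptionA} with $\beta=1-\sigma^2/\rho^2-\delta$ for any $\delta\in(0,1-\sigma^2/\rho^2)$ and an explicit constant $C$. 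Once both ingredients are in place, Theorem~\ref{thm:error_bound} yields the stated estimate with $C(s,\delta,\lambda):=\bigl(4\sum_{\emptyset\neq\uu\subseteq[s]}\gamma_\uu^\lambda K_{\beta,\lambda}^{|\uu|}\bigr)^{1/\lambda}$ after pulling out $N=2^m$, so the entire argument reduces to verifying finiteness of the norm.

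The main step is therefore to bound $\|F_\sigma\|_{\uu,\vv}$ for each $\emptyset\neq\uu\subseteq[s]$ and $\vv\subseteq\uu$. Writing $g:=\varphi_1/\varphi_\sigma$, one has $F_\sigma=F\prod_{j=1}^s g(x_j)$ with $g(x)=\sigma\,\mathrm{e}^{-\frac{x^2}{2}(1-1/\sigma^2)}$; since every derivative of $g$ is a polynomial in $x$ times $g$ itself, all $g^{(i)}$ retain the Gaussian decay $\mathrm{e}^{-\frac{x^2}{2}(1-1/\sigma^2)}$ up to polynomial factors. First I would apply the Leibniz rule coordinatewise to $\partial^{|\uu|+|\ww|}F_\sigma/(\partial\bsx_\ww\partial\bsx_\uu)$, placing one derivative on each $j\in\uu\setminus\ww$ and two on each $j\in\ww$, thereby writing it as a finite sum of terms, each a partial derivative of $F$ times a product of derivatives of $g$. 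Each derivative of $F$ is controlled by \eqref{est:norm:assumpt}, contributing the factor $\prod_j \mathrm{e}^{b_j|x_j|}$ together with finite prefactors $|\bsnu|!(\ln 2)^{-|\bsnu|}\prod_j b_j^{\nu_j}$.

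Next I would insert these bounds into \eqref{eq_uvnorm} and split the integral into the inner integral over $[s]\setminus\uu$ and the outer integral over $\uu$. For $j\in[s]\setminus\uu$ the integrand carries $g(x_j)\varphi_\sigma(x_j)=\varphi_1(x_j)$ times $\mathrm{e}^{b_j|x_j|}$, and $\int_\RR \mathrm{e}^{b_j|x_j|}\varphi_1(x_j)\rd x_j<\infty$. For the outer integral the exponents combine as follows: a factor $g^{(i)}(x_j)$ gives decay $\mathrm{e}^{-\frac{x_j^2}{2}(1-1/\sigma^2)}$, the weight $\psi(x_j)=\omega(x_j)=\varphi_\rho(x_j)/\varphi_\sigma(x_j)$ gives growth $\mathrm{e}^{+\frac{x_j^2}{2}(1/\sigma^2-1/\rho^2)}$, and \eqref{est:norm:assumpt} gives $\mathrm{e}^{b_j|x_j|}$. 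Their product has Gaussian exponent $-\frac{x_j^2}{2}\bigl(1-\tfrac{2}{\sigma^2}+\tfrac{1}{\rho^2}\bigr)$ for the weighted coordinates $j\in\vv$, and $-\frac{x_j^2}{2}\bigl(1-\tfrac{1}{\sigma^2}\bigr)$ for the unweighted coordinates $j\in\uu\setminus\vv$. The former is integrable precisely by the hypothesis $1+\tfrac{1}{\rho^2}-\tfrac{2}{\sigma^2}>0$, the latter because $\sigma>1$; polynomial prefactors and the $\mathrm{e}^{b_j|x_j|}$ factor do not affect convergence of these Gaussian integrals.

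This shows every $\|F_\sigma\|_{\uu,\vv}$ is finite; as $s$ is finite there are only finitely many pairs $(\uu,\vv)$, so $\|F_\sigma\|_{\bsgamma,\varphi_\sigma,\psi,\omega}=\max_{\uu,\vv}\gamma_\uu^{-1}\|F_\sigma\|_{\uu,\vv}<\infty$ for any positive weights $\bsgamma$, and the $\bsnu=\bszero$ case of the same estimate gives $F_\sigma\in L_2(\RR^s,\varphi_\sigma)$ under $\sigma>1$. Hence $F_\sigma\in\mathcal{F}_{\bsgamma,\varphi_\sigma,\psi,\omega}$, and combining this with Proposition~\ref{pr:beta} one may invoke Theorem~\ref{thm:error_bound} to obtain the claimed bound. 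The main obstacle is the bookkeeping in the Leibniz expansion together with the precise exponent accounting that isolates the condition $1+\tfrac{1}{\rho^2}-\tfrac{2}{\sigma^2}>0$; once the exponents are tracked correctly, convergence of each Gaussian integral is routine.
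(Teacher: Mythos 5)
Your proposal is correct and follows essentially the same route as the paper: the paper also reduces the theorem to verifying $\|F_\sigma\|_{\bsgamma,\varphi_\sigma,\varphi_\rho/\varphi_\sigma,\varphi_\rho/\varphi_\sigma}<\infty$ (its Lemma~\ref{lem:bound_normF}, proved via exactly your Leibniz expansion of $\partial^{\bullet}\bigl(F\,\varphi_1/\varphi_\sigma\bigr)$, the bound \eqref{est:norm:assumpt}, and coordinatewise Gaussian-exponent accounting that isolates $1+\tfrac{1}{\rho^2}-\tfrac{2}{\sigma^2}>0$ on the $\psi$-weighted coordinates and $\sigma>1$ on the rest), then invokes Proposition~\ref{pr:beta} for Assumption~\ref{assumptionA} and concludes with Theorem~\ref{thm:error_bound}. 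Your exponent bookkeeping and the resulting constant $C(s,\delta,\lambda)$ match the paper's.
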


\begin{remark}
Note that $\beta$ can be chosen arbitrarily close to $1-\sigma^2/\rho^2$ and $\lambda$ arbitrarily close to $1/(1+\beta)$. Hence, the convergence rate can be arbitrarily close to $N^{-2+\sigma^2/\rho^2}$.  The smaller the quotient $\sigma^2/\rho^2$, the better the rate.
\end{remark}

In the following lemma we show that the norm of $F_\sigma$ is finite. Then Theorem~\ref{thm:pde_app} follows from Theorem~\ref{thm:error_bound}.

\begin{lemma}\label{lem:bound_normF}
Let $F$ be given by \eqref{F_PDE} and $F_\sigma$ by \eqref{F_sigma_PDE}. Let $\rho, \sigma, \bsgamma$ be defined as in Theorem~\ref{thm:pde_app}. Assume that $\rho \ge \sigma > 1$ and that $1 + \tfrac{1}{\rho^2} - \tfrac{2}{\sigma^2} > 0$. Then there exists a constant $C> 0$ depending on  $(b_j)_j$, $\rho, \sigma, s, K^*$, such that
\begin{equation*}
\left\| F_{\sigma} \right\|_{\bsgamma,\varphi_\sigma,\varphi_\rho/\varphi_\sigma,\varphi_\rho / \varphi_\sigma} \le C < \infty. 
\end{equation*}
\end{lemma}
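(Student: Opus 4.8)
The plan is to bound each building block $\|F_\sigma\|_{\uu,\vv}$ appearing in the norm \eqref{Fnorm:infty} by a finite constant and then, because $s$ is finite, to conclude by maximizing over the finitely many pairs $(\uu,\vv)$. Writing $g:=\varphi_1/\varphi_\sigma$, so that $F_\sigma=F\cdot\prod_{j=1}^s g(x_j)$, I would first expand the mixed derivative $\partial^{|\uu|+|\ww|}F_\sigma/(\partial\bsx_\ww\partial\bsx_\uu)$ by the Leibniz rule. Since $\prod_j g(x_j)$ is separable and the derivative order in coordinate $j$ is $n_j=2$ for $j\in\ww$, $n_j=1$ for $j\in\uu\setminus\ww$, and $n_j=0$ for $j\notin\uu$, this expansion is a finite sum of terms of the form $c_{\boldsymbol{p}}\,\bigl(\partial^{|\boldsymbol{p}|}F/\partial\bsx^{\boldsymbol{p}}\bigr)\prod_{j\in\uu}g^{(n_j-p_j)}(x_j)\prod_{j\notin\uu}g(x_j)$, where $0\le p_j\le n_j$ and the coefficients $c_{\boldsymbol{p}}$ are products of binomials, hence bounded by $2^{|\uu|}$.

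Next, I would insert the derivative bound \eqref{est:norm:assumpt} into each term and carry out the inner integral over $\bsx_{[s]\setminus\uu}$ against the weight $\prod_{j\notin\uu}\varphi_\sigma(x_j)$. For $j\notin\uu$ only the zeroth-order factor $g(x_j)=\varphi_1(x_j)/\varphi_\sigma(x_j)$ occurs, and the decisive simplification is $g(x_j)\varphi_\sigma(x_j)=\varphi_1(x_j)$; together with the factor $\mathrm{e}^{b_j|x_j|}$ supplied by \eqref{est:norm:assumpt}, each inactive coordinate contributes the finite constant $\int_{\RR}\mathrm{e}^{b_j|t|}\varphi_1(t)\rd t$. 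The factors $\mathrm{e}^{b_j|x_j|}$ with $j\in\uu$ do not depend on the inactive variables and pull out of the inner integral.

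It then remains to bound the outer integral over $\bsx_\uu$ against the weight $\prod_{j\in\vv}(\varphi_\rho/\varphi_\sigma)(x_j)$, which factorizes into one-dimensional integrals $\int_\RR \mathrm{e}^{b_j|x|}\,|g^{(n_j-p_j)}(x)|\,[(\varphi_\rho/\varphi_\sigma)(x)]^{\mathbf{1}[j\in\vv]}\rd x$. Here I would use that every derivative $g^{(\mu)}$ is a polynomial times $\mathrm{e}^{-c x^2/2}$ with $c:=1-\sigma^{-2}>0$ (since $\sigma>1$), whereas $\varphi_\rho/\varphi_\sigma$ is a constant times $\mathrm{e}^{+\frac12(\sigma^{-2}-\rho^{-2})x^2}$. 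For $j\in\uu\setminus\vv$ the coefficient of $-x^2/2$ is $c>0$, so the integral converges; for $j\in\vv$ the net coefficient of $-x^2/2$ equals $c-(\sigma^{-2}-\rho^{-2})=1-2\sigma^{-2}+\rho^{-2}$, which is positive exactly by the hypothesis $1+\rho^{-2}-2\sigma^{-2}>0$. In both cases the linear factor $\mathrm{e}^{b_j|x|}$ and the polynomial prefactor are dominated by the strictly negative leading quadratic, so each one-dimensional integral is finite.

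Summing the finitely many Leibniz terms (and over $\ww\subseteq\vv$) then shows that each $\|F_\sigma\|_{\uu,\vv}$ is finite, and dividing by $\gamma_\uu$ and maximizing over the finitely many pairs $(\uu,\vv)$ with $\uu\subseteq[s]$ yields the claimed finite bound $C$, whose dependence on $(b_j)_j,\rho,\sigma,s,K^\ast$ is explicit in the constants produced along the way. The only genuinely delicate point is the last one: the various exponential taming factors must conspire so that the quadratic exponent on the active coordinates $j\in\vv$ stays negative, and this is precisely where the condition $1+\rho^{-2}-2\sigma^{-2}>0$ is needed and cannot be relaxed.
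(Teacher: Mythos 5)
Your proposal is correct and follows essentially the same route as the paper: Leibniz expansion of $\partial^{|\uu|+|\ww|}F_\sigma$, insertion of the derivative bound \eqref{est:norm:assumpt}, factorization into one-dimensional integrals, and verification that the net Gaussian exponent on the coordinates $j\in\vv$ is $1+\rho^{-2}-2\sigma^{-2}>0$, exactly as in the paper's final display. The only cosmetic difference is that the paper evaluates the inactive-coordinate integral explicitly to obtain the per-coordinate constant $\exp(2\max(b_j,b_j^2))$, whereas you simply note its finiteness, which suffices since the lemma's constant is allowed to depend on $s$.
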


\begin{proof}
It suffices to show that the $\|F_\sigma\|_{\uu,\vv}$ is finite. We now choose the weight functions $\psi=\omega = \varphi_\rho / \varphi_\sigma$ with $\rho \ge \sigma > 1$. First we derive  a bound on the partial derivatives of the integrand $F_\sigma$. We have
$$\frac{\partial^{|(\bsone_{\uu \setminus \ww},\bstwo_{\ww},\bszero)|}F_{\sigma}}{\partial \bsx_{(\bsone_{\uu \setminus \ww},\bstwo_{\ww},\bszero)}} (\bsx) = \sum_{\bsnu \le (\bsone_{\uu \setminus \ww},\bstwo_{\ww},\bszero)} {(\bsone_{\uu \setminus \ww},\bstwo_{\ww},\bszero) \choose \bsnu} \frac{\partial^{|\bsnu|}F}{\partial \bsx_{\bsnu}}(\bsx) \frac{\partial^{|(\bsone_{\uu \setminus \ww},\bstwo_{\ww},\bszero) -\bsnu|}}{\partial \bsx_{(\bsone_{\uu \setminus \ww},\bstwo_{\ww},\bszero)-\bsnu}} \frac{\varphi_1(\bsx)}{\varphi_{\sigma}(\bsx)},$$
where the sum is over all $s$-tuples $\bsnu$ that are coordinate-wise less than or equal to $(\bsone_{\uu \setminus \ww},\bstwo_{\ww},\bszero)$ and where, for $\bsnu=(\nu_1,\ldots,\nu_s)$ and $\bsk=(k_1,\ldots,k_s)$, we use ${\bsk \choose \bsnu}:=\prod_{j=1}^s {k_j \choose \nu_j}$. We have $$\frac{\rd}{\rd x} \frac{\varphi_1(x)}{\varphi_{\sigma}(x)}=-\frac{x}{\sigma/(\sigma^2-1)} \exp\left(-\frac{x^2}{2 \sigma^2/(\sigma^2-1)}\right)$$ 
and 
$$\frac{\rd^2}{\rd x^2} \frac{\varphi_1(x)}{\varphi_{\sigma}(x)}=\frac{1}{\sigma/(\sigma^2-1)} \left(\frac{x^2}{\sigma^2/(\sigma^2-1)}-1\right) \exp\left(-\frac{x^2}{2 \sigma^2/(\sigma^2-1)}\right).$$ Hence
\begin{eqnarray*}
\left|\frac{\partial^{|(\bsone_{\uu \setminus \ww},\bstwo_{\ww},\bszero) -\bsnu|}}{\partial \bsx_{(\bsone_{\uu \setminus \ww},\bstwo_{\ww},\bszero)-\bsnu}} \frac{\varphi_1(\bsx)}{\varphi_{\sigma}(\bsx)}\right| & \le & \prod_{j = 1}^s \exp\left(-\frac{x_j^2}{2 \sigma^2/(\sigma^2-1)}\right) \left| \prod_{j=1}^s p_{((\bsone_{\uu \setminus \ww},\bstwo_{\ww},\bszero) -\bsnu)_j}(x_j)\right|,
\end{eqnarray*}
where $((\bsone_{\uu \setminus \ww},\bstwo_{\ww},\bszero) -\bsnu)_j$ is the $j$-th entry of the vector $(\bsone_{\uu \setminus \ww},\bstwo_{\ww},\bszero) -\bsnu$, which is either $0$, $1$ or $2$, and where
$$p_{\ell}(x) := \left\{ 
\begin{array}{ll}
\sigma & \mbox{if } \ell=0,\\
-\frac{x}{\sigma/(\sigma^2-1)} & \mbox{if } \ell=1,\\
\frac{1}{\sigma/(\sigma^2-1)} \left(\frac{x^2}{\sigma^2/(\sigma^2-1)}-1\right) & \mbox{if } \ell=2.
\end{array}\right. $$
Next we also use \eqref{est:norm:assumpt} and estimate
\begin{eqnarray*}
\lefteqn{\left|\frac{\partial^{|(\bsone_{\uu \setminus \ww},\bstwo_{\ww},\bszero)|}F_{\sigma}}{\partial \bsx_{(\bsone_{\uu \setminus \ww},\bstwo_{\ww},\bszero)}} (\bsx)\right| }\\
& \le &  \sum_{\bsnu \le (\bsone_{\uu \setminus \ww},\bstwo_{\ww},\bszero)} {(\bsone_{\uu \setminus \ww},\bstwo_{\ww},\bszero) \choose \bsnu} \left|\frac{\partial^{|\bsnu|}F}{\partial \bsx_{\bsnu}}(\bsx)\right|  \\
& & \times \prod_{j = 1}^s \exp\left(-\frac{x_j^2}{2 \sigma^2/(\sigma^2-1)}\right) \left| \prod_{j=1}^s p_{((\bsone_{\uu \setminus \ww},\bstwo_{\ww},\bszero) -\bsnu)_j}(x_j)\right|\\
 & \le & K^{\ast} \sum_{\bsnu \le (\bsone_{\uu \setminus \ww},\bstwo_{\ww},\bszero)} {(\bsone_{\uu \setminus \ww},\bstwo_{\ww},\bszero) \choose \bsnu} \frac{|\bsnu|!}{(\ln 2)^{|\bsnu|}} \left(\prod_{j=1}^s b_j^{\nu_j}\right) \left(\prod_{j=1}^s \exp(b_j|x_j|)\right)\\
& & \times \prod_{j = 1}^s \exp\left(-\frac{x_j^2}{2 \sigma^2/(\sigma^2-1)}\right) \left| \prod_{j=1}^s p_{((\bsone_{\uu \setminus \ww},\bstwo_{\ww},\bszero) -\bsnu)_j}(x_j)\right|\\
 & = & K^{\ast} \prod_{j = 1}^s \exp\left(-\frac{x_j^2}{2 \sigma^2/(\sigma^2-1)} +b_j|x_j|\right) \\
 && \times \sum_{\bsnu \le (\bsone_{\uu \setminus \ww},\bstwo_{\ww},\bszero)} {(\bsone_{\uu \setminus \ww},\bstwo_{\ww},\bszero) \choose \bsnu} \frac{|\bsnu|!}{(\ln 2)^{|\bsnu|}} \left(\prod_{j=1}^s b_j^{\nu_j}\right)  \left| \prod_{j=1}^s p_{((\bsone_{\uu \setminus \ww},\bstwo_{\ww},\bszero) -\bsnu)_j}(x_j)\right|.
\end{eqnarray*}
This gives us a suitable estimate for the absolute value of the partial derivatives of $F_\sigma$. Substituting this estimate into $\|\cdot\|_{\uu, \vv}$ defined in \eqref{eq_uvnorm} with $\psi=\omega$, we thus have
\begin{eqnarray*}
\|F_{\sigma} \|_{\uu, \vv} & \le & \sum_{\ww \subseteq \vv} \int_{\mathbb{R}^{|\uu|}} \left| \int_{\mathbb{R}^{s-|\uu|}} \frac{\partial^{|\uu| + |\ww|} F_{\sigma}}{\partial \bsx_\ww \partial \bsx_{\uu}}(\bsx)\prod_{j \in [s]\setminus \uu} \varphi_{\sigma}(x_j) \rd \bsx_{[s] \setminus \uu} \right|   \prod_{j \in \vv} \psi(x_j)     \rd \bsx_{\uu}\\
& \le & K^{\ast} \sum_{\ww \subseteq \vv} \int_{\mathbb{R}^{|\uu|}}  \int_{\mathbb{R}^{s-|\uu|}} \prod_{j = 1}^s \exp\left(-\frac{x_j^2}{2 \sigma^2/(\sigma^2-1)} +b_j|x_j|\right)\\
& & \sum_{\bsnu \le (\bsone_{\uu \setminus \ww},\bstwo_{\ww},\bszero)} {(\bsone_{\uu \setminus \ww},\bstwo_{\ww},\bszero) \choose \bsnu} \frac{|\bsnu|!}{(\ln 2)^{|\bsnu|}} \left(\prod_{j=1}^s b_j^{\nu_j}\right)  \left| \prod_{j=1}^s p_{((\bsone_{\uu \setminus \ww},\bstwo_{\ww},\bszero) -\bsnu)_j}(x_j)\right| \\
& & \prod_{j \in [s]\setminus \uu} \varphi_{\sigma}(x_j) \rd \bsx_{[s] \setminus \uu}   \prod_{j \in \vv} \psi(x_j)     \rd \bsx_{\uu}\\
& = & K^{\ast} \sum_{\ww \subseteq \vv} \sum_{\bsnu \le (\bsone_{\uu \setminus \ww},\bstwo_{\ww},\bszero)} {(\bsone_{\uu \setminus \ww},\bstwo_{\ww},\bszero) \choose \bsnu} \frac{|\bsnu|!}{(\ln 2)^{|\bsnu|}} \left(\prod_{j=1}^s b_j^{\nu_j}\right) \\
&& \times  \prod_{j \in [s]\setminus \uu} \int_{-\infty}^{\infty} \exp\left(-\frac{x^2}{2 \sigma^2/(\sigma^2-1)} +b_j|x|\right) |p_{((\bsone_{\uu \setminus \ww},\bstwo_{\ww},\bszero) -\bsnu)_j}(x)| \varphi_{\sigma}(x) \rd x \\
&& \times \prod_{j \in \vv} \int_{-\infty}^{\infty} \exp\left(-\frac{x^2}{2 \sigma^2/(\sigma^2-1)} +b_j|x|\right) |p_{((\bsone_{\uu \setminus \ww},\bstwo_{\ww},\bszero) -\bsnu)_j}(x)| \psi(x) \rd x \\
&& \times \prod_{j \in \uu \setminus\vv} \int_{-\infty}^{\infty} \exp\left(-\frac{x^2}{2 \sigma^2/(\sigma^2-1)} +b_j|x|\right) |p_{((\bsone_{\uu \setminus \ww},\bstwo_{\ww},\bszero) -\bsnu)_j}(x)|  \rd x.
\end{eqnarray*}
Now we require precise estimates of the integrals that appear in the above estimate. For $j \in [s]\setminus \uu$ we have $p_{((\bsone_{\uu \setminus \ww},\bstwo_{\ww},\bszero) -\bsnu)_j}(x)=p_0(x)=\sigma$. Hence, for the integral with respect to $\varphi_\sigma$ we have 
\begin{align}\label{est:int1}
\lefteqn{\int_{-\infty}^{\infty} \exp\left(-\frac{x^2}{2 \sigma^2/(\sigma^2-1)} +b_j|x|\right) \sigma \varphi_{\sigma}(x) \,\mathrm{d} x}\nonumber\\
&= \frac{1}{\sqrt{2 \pi}} \left[\int_{-\infty}^0 \exp\left(- \frac{x^2}{2} - b_j x\right)  \,\mathrm{d} x +  \int_0^{\infty} \exp\left(-\frac{x^2}{2} +b_j x\right) \,\mathrm{d} x \right]\nonumber\\
&= \frac{1}{\sqrt{2 \pi}} \left[\int_{-\infty}^0 \exp\left(-\frac{(x+b_j)^2}{2} + \frac{b_j^2}{2} \right)  \,\mathrm{d} x +\int_0^{\infty} \exp\left(-\frac{(x-b_j)^2}{2} + \frac{b_j^2}{2} \right)  \,\mathrm{d} x \right]\nonumber\\
&= \frac{1}{\sqrt{2 \pi}} \exp\left(\frac{b_j^2}{2}\right) \left[\int_{-\infty}^0 \exp\left(-\frac{(x+b_j)^2}{2}\right) \,\mathrm{d} x + \int_0^{\infty} \exp\left(-\frac{(x-b_j)^2}{2}\right)   \,\mathrm{d} x\right]\nonumber\\
&= \exp\left(\frac{b_j^2}{2}\right) \left[\int_{-\infty}^{b_j} \varphi_1(x) \,\mathrm{d} x + \int_{-b_j}^{\infty} \varphi_1(x)   \,\mathrm{d} x\right]\nonumber\\
&= \exp\left(\frac{b_j^2}{2}\right) \left[1+\int_{-b_j}^{b_j} \varphi_1(x)   \,\mathrm{d} x\right]\nonumber\\
&\le \exp\left(\frac{b_j^2}{2}\right) (1+2 b_j)\nonumber\\
& \le \exp\left(2 b_j+\frac{b_j^2}{2}\right)\nonumber\\
& \le \exp\left(2 \max(b_j,b_j^2)\right).
\end{align}
Therefore, 
$$\prod_{j \in [s]\setminus \uu} \int_{-\infty}^{\infty} \exp\left(-\frac{x^2}{2 \sigma^2/(\sigma^2-1)} +b_j|x|\right) \sigma \varphi_{\sigma}(x) \,\mathrm{d} x \le \exp\left(2 \sum_{j \in [s]\setminus \uu} \max(b_j,b_j^2)\right).$$
Next, we consider the integral with respect to $\psi$ in the estimate of the norm of $F_{\sigma}$. Recall that we chose $\psi = \varphi_\rho/ \varphi_\sigma$ with $\rho \ge \sigma$, as in \eqref{eq_psi_1}. We have 
\begin{eqnarray*}
\lefteqn{\int_{-\infty}^{\infty} \exp\left(-\frac{x^2}{2 \sigma^2/(\sigma^2-1)} +b_j|x|\right) | p_{((\bsone_{\uu \setminus \ww},\bstwo_{\ww},\bszero) -\bsnu)_j}(x)|  \psi(x) \rd x}  \\ 
& = & \frac{\sigma}{\rho} \int_{-\infty}^\infty \exp\left(-\frac{x^2}{2} \left(1+ \frac{1}{\rho^2} - \frac{2}{\sigma^2} \right)  + b_j |x| \right) |p_{((\bsone_{\uu \setminus \ww},\bstwo_{\ww},\bszero) -\bsnu)_j}(x)|  \rd x.
\end{eqnarray*}
This is finite as long as $1 + \frac{1}{\rho^2} - \frac{2}{\sigma^2} > 0$. This can be shown similarly as \eqref{est:int1}. We also have 
\begin{equation*}
\int_{-\infty}^{\infty}  \exp\left(-\frac{x^2}{2 \sigma^2/(\sigma^2-1)} +b_j|x|\right) |p_{((\bsone_{\uu \setminus \ww},\bstwo_{\ww},\bszero) -\bsnu)_j}(x)|  \rd x < \infty.
\end{equation*}
From here we see that $\|F_{\sigma}\|_{\uu,\vv}$ is indeed finite. \end{proof}

\begin{proof}[Proof of Theorem~\ref{thm:pde_app}]
We apply Theorem~\ref{thm:error_bound}. According to Lemma~\ref{lem:bound_normF} the norm $\left\| F_{\sigma} \right\|_{\infty, \bsgamma,\varphi_\sigma,\varphi_\rho/\varphi_\sigma,\varphi_\rho / \varphi_\sigma}$ is finite. The assumption $1 + \frac{1}{\rho^2} - \frac{2}{\sigma^2} > 0$ is true for $\sigma > \rho \sqrt{\frac{2}{1+\rho^2}}$. Since we also need $\rho \ge \sigma$, we require
\begin{equation}\label{ineq_sig_rho}
\rho \sqrt{\frac{2}{1+\rho^2}} < \sigma \le \rho,
\end{equation}
and $\sigma  > 1$. Now $1 \le \rho \sqrt{\frac{2}{1+\rho^2}} < \rho$ holds for $\rho > 1$. In this case, we get from Proposition~\ref{pr:beta} that $$\beta = 1 - \frac{\sigma^2}{\rho^2}-\delta$$ where $\delta \in (0,1-\sigma^2/\rho^2)$ can be chosen arbitrarily close to 0. Now the result follows from Theorem~\ref{thm:error_bound}.
\end{proof}

\section{Numerical results}\label{sec:numerics}

In this section, we validate our theoretical developments by applying the QMC integration rule \eqref{QMC:rule} to compute expected values of selected quantities of interest (QoIs) arising in two diffusion problems, where the diffusion coefficients are modelled as log-normal random fields. In both examples, we extend slightly beyond the homogeneous Dirichlet boundary condition assumption in Section~\ref{sec:Appl} by considering more challenging mixed boundary conditions. In all numerical experiments, we generate the QMC point sets using the QMC4PDE package of \citeauthor{QMC4PDE} accompanying the article of \cite{KN16}, with the interlaced polynomial lattice rule of order two. 

The first experiment considers the diffusion problem on a one-dimensional domain $D = [0,1]$:
\begin{equation}\label{eq:diff1d}
- \frac{\partial}{\partial x} \Big( a(x, \omega) \frac{\partial}{\partial x} \, u(x, \omega)\Big) = f(x), 
\end{equation}
subject to boundary conditions
\begin{equation}\label{eq:diff1d_bc}
u(0, \omega) =0 \quad \text{and} \quad a(1, \omega) \frac{\partial}{\partial x} u(1, \omega) = R,
\end{equation}
for almost all $\omega \in \Omega$ and $x \in D$. The diffusion field is defined as 
\begin{equation}\label{eq:diff1d_rf}
a(x, \omega) = \exp\Big(a_\ast + \sum_{j = 1}^s \sqrt{\mu_j} \, \xi_j(x) Y_j(\omega)\Big), \quad a_\ast = 1/2, \quad \xi_j(x) = \cos(\pi j x),
\end{equation}
where $\mu_j = j^{-2\alpha}$ with $\alpha =1$ is a sequence of decaying weights, and $\big(Y_j(\omega)\big)_{j = 1, \ldots, s}$ are i.i.d. standard normal random variables. The boundary condition on the right-hand side is chosen such that the system of equations \eqref{eq:diff1d}--\eqref{eq:diff1d_rf} yields the explicit solution~\citep{MS13}
\[
u(x, \omega) = \int_0^x \frac{1}{a(z,\omega)} \Big(R + \int_z^1 f (w) \rd w\Big) \rd z.
\]
Specifically, for $R=1/2$ and the right-hand-side function $f(x) = x$, we obtain the solution 
\begin{equation}\label{eq:diff1d_soln}
u(x, \omega) = \int_0^x \frac{1 - z^2/2}{a(z,\omega)} \rd z.
\end{equation}
We define the QoI as the value of the potential function at $x = 0.5$, i.e.,  \[
F_1(\omega):=u(0.5, \omega).\] The fast implementation of \cite{Waldvogel} of the Clenshaw--Curtis quadrature rule \citep{CC60} is used to evaluate the integral in \eqref{eq:diff1d_soln} for each realisation of the random draw $\omega$, with absolute error tolerance $10^{-10}$. 

We numerically test the convergence of interlaced polynomial lattice point sets on a range of stochastic dimensions, $s \in \{2^2, 2^3, \ldots, 2^6 \}$. For each dimension, we use QMC point sets with increasing sizes $N \in \{2^{17}, 2^{18}, \ldots, 2^{22}\}$ to verify the convergence in integrating the QoI $F_1(\omega)$, comparing to the ``true'' integral calculated using $N = 2^{23}$ QMC points. Figure~\ref{fig:PDE1d_N} summaries the convergence of interlaced polynomial lattice point sets for each stochastic dimension. The observed convergence rates lie between $N^{-1}$ and $N^{-1.5}$ in this set of experiments. Overall, the convergence rate improves with increasing stochastic dimension. For $s=2^2$, the observed rate is about $N^{-1.09}$. For $s=2^{6}$, the observed rate is about $N^{-1.39}$. For each QMC sample size, we also report the estimated integration error as a function of the stochastic dimension. The results are shown in Figure~\ref{fig:PDE1d_s}. It appears that the integration error increases with stochastic dimension for all QMC sample sizes, which is in agreement with the QMC theory developed in this paper.

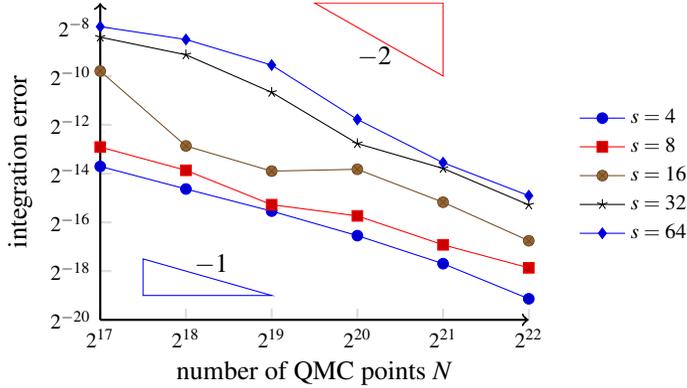
\begin{figure}
\centering
\begin{tikzpicture}
\begin{axis}[%
width=0.5\linewidth,
height=0.4\linewidth,
xmode=normal,
xmin=17,xmax=22,
ymode=normal,
ymin=-20,ymax=-7,
xlabel={number of QMC points $N$},
ylabel={integration error},
legend style={at={(1.4,0.7)},anchor=north east},
legend cell align={left},
nodes near coords,point meta = explicit symbolic,
x label style={at={(0.5,-0.1)}},
xtick={17,18,19,20,21,22},
xticklabels={$2^{17}$,$2^{18}$,$2^{19}$,$2^{20}$,$2^{21}$,$2^{22}$},
ytick={-6,-8,-10,-12,-14,-16,-18,-20},
yticklabels={$2^{-6}$,$2^{-8}$,$2^{-10}$,$2^{-12}$,$2^{-14}$,$2^{-16}$,$2^{-18}$,$2^{-20}$}
]
\addplot coordinates{
(16, -12.8507)
(17, -13.7062)
(18, -14.6325)
(19, -15.5405)
(20, -16.5451)
(21, -17.6958)
(22, -19.1362)
}; \addlegendentry{$s=4$};

\addplot coordinates{
(16, -11.6318)
(17, -12.9118)
(18, -13.8661)
(19, -15.2770)
(20, -15.7361)
(21, -16.9223)
(22, -17.8659)
}; \addlegendentry{$s=8$};

\addplot coordinates{
(16, -8.5644)
(17, -9.7969)
(18, -12.8666)
(19, -13.8995)
(20, -13.8240)
(21, -15.1757)
(22, -16.7558)
}; \addlegendentry{$s=16$};

\addplot coordinates{
(16, -7.4799)
(17, -8.4049)
(18, -9.1273)
(19, -10.6679)
(20, -12.7701)
(21, -13.7890)
(22, -15.2843)
}; \addlegendentry{$s=32$};

\addplot coordinates{
(16, -7.0306)
(17, -7.9741)
(18, -8.4999)
(19, -9.5502)
(20, -11.7776)
(21, -13.5521)
(22, -14.9070)
}; \addlegendentry{$s=64$};

\draw[blue] (axis cs:17.5,-17.5) -- (axis cs:17.5,-19) -- (axis cs:19,-19) -- cycle;
\node at (axis cs:18.3,-17.7) {$-1$};

\draw[red] (axis cs:19.5,-7) -- (axis cs:21,-7) -- (axis cs:21,-10) -- cycle;
\node at (axis cs:20.2,-9.2) {$-2$};

\end{axis}
\end{tikzpicture}
\caption{\label{fig:PDE1d_N} QMC integration errors for integrating the quantity of interest $F_1(\omega)$ in the one-dimensional PDE example, shown as a function of QMC sample size for stochastic dimensions $s \in \{2^2, 2^3, \ldots, 2^6\}$. The integration errors and QMC sample sizes are reported on the base $2$ logarithmic scale. The blue and red triangles indicate reference slopes of $-1$ and $-2$, respectively.}
\end{figure}

\begin{figure}
\centering
\begin{tikzpicture}
\begin{axis}[%
width=0.5\linewidth,
height=0.4\linewidth,
xmode=log,
log basis x=2, 
xmin=4,xmax=64,
ymode=normal,
ymin=-20,ymax=-7,
xlabel={dimension $s$},
ylabel={integration error},
legend style={at={(1.4,0.7)},anchor=north east},
legend cell align={left},
nodes near coords,point meta = explicit symbolic,
x label style={at={(0.5,-0.1)}},
xtick={4,8,16,32,64},
ytick={-6,-8,-10,-12,-14,-16,-18,-20},
yticklabels={$2^{-6}$,$2^{-8}$,$2^{-10}$,$2^{-12}$,$2^{-14}$,$2^{-16}$,$2^{-18}$,$2^{-20}$}
]

\addplot coordinates{
(64, -7.9741)
(32, -8.4049)
(16, -9.7969)
(8, -12.9118)
(4, -13.7062)
}; \addlegendentry{$N=2^{17}$};

\addplot coordinates{
(64, -8.4999)
(32, -9.1273)
(16, -12.8666)
(8, -13.8661)
(4, -14.6325)
}; \addlegendentry{$N=2^{18}$};

\addplot coordinates{
(64, -9.5502)
(32, -10.6679)
(16, -13.8995)
(8, -15.2770)
(4, -15.5405)
}; \addlegendentry{$N=2^{19}$};

\addplot coordinates{
(64, -11.7776)
(32, -12.7701)
(16, -13.8240)
(8, -15.7361)
(4, -16.5451)
}; \addlegendentry{$N=2^{19}$};

\addplot coordinates{
(64, -11.7776)
(32, -12.7701)
(16, -13.8240)
(8, -15.7361)
(4, -16.5451)
}; \addlegendentry{$N=2^{20}$};

\addplot coordinates{
(64, -13.5521)
(32, -13.7890)
(16, -15.1757)
(8, -16.9223)
(4, -17.6958)
}; \addlegendentry{$N=2^{21}$};

\addplot coordinates{
(64, -14.9070)
(32, -15.2843)
(16, -16.7558)
(8, -17.8659)
(4, -19.1362)
}; \addlegendentry{$N=2^{22}$};

\draw[blue] (axis cs:22.63,-19) -- (axis cs:45.25,-19) -- (axis cs:45.25,-18) -- cycle;
\node at (axis cs:32,-18) {$1$};

\draw[red] (axis cs:5.66,-8) -- (axis cs:11.31,-8) -- (axis cs:5.66,-10) -- cycle;
\node at (axis cs:8.2,-9.5) {$2$};

\end{axis}
\end{tikzpicture}
\caption{\label{fig:PDE1d_s} QMC integration errors for integrating the quantity of interest $F_1(\omega)$ in the one-dimensional PDE example, shown as a function of stochastic dimension for QMC sample sizes $N \in \{2^{17}, 2^{18}, \ldots, 2^{22}\}$. The integration errors and stochastic dimensions are reported on the base $2$ logarithmic scale. The blue and red triangles indicate reference slopes of $1$ and $2$, respectively.}
\end{figure}
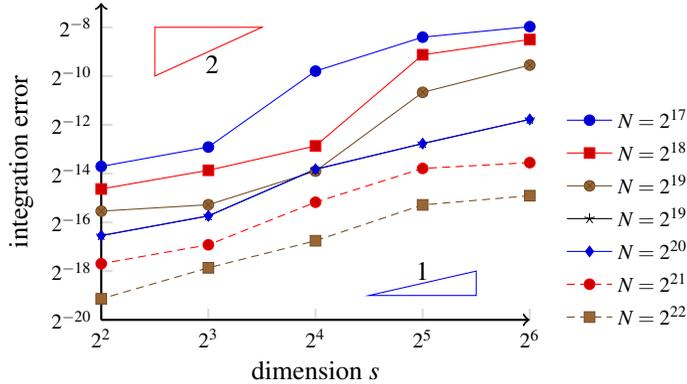

The second experiment considers the diffusion problem on a two-dimensional domain $D = [0,1]^2$:
\begin{equation}\label{def:diff2d}
-\nabla_{\bsx} \cdot (a(\bsx, \omega) \nabla_{\bsx} \, u(\bsx, \omega)) = f(\bsx), \quad \mbox{for almost all $ \omega \in \Omega$ and $\bsx \in D$,}
\end{equation}
subject to Dirichlet boundary conditions
\begin{equation}\label{def:diff2d_bc1}
u(\bsx,\omega) = 1 \text{ for } \bsx \in \{(0,x_2): x_2 \in [0,1]\}, \quad u(\bsx,\omega) = 0 \text{ for } \bsx \in \{(1,x_2): x_2 \in [0,1]\}
\end{equation}
on the left and right boundaries, and the no-flux boundary condition
\begin{equation}\label{def:diff2d_bc2}
a(\bsx, \omega) \nabla_{\bsx} u(\bsx,\omega) \cdot \vec{n}(\bsx) = 0 \text{ for } \bsx \in \{(x_1,0): x_1 \in [0,1]\} \text{ and } \bsx \in \{(x_1,1): x_1 \in [0,1]\}
\end{equation}
on the top and bottom boundaries, where $\vec{n}(\bsx)$ is the outward normal vector along the boundary. We specify the forcing term $f(\bsx)$ as a bimodal function defined as a superposition of Gaussian kernels
\begin{equation}\label{def:diff2d_forcing}
f(\bsx) = \frac{1}{2\pi \, (0.05)^2} \exp\!\left( -\frac{1}{2(0.05)^2} \big\| \bsx - \bsx_1\big\|^2 \right) + \frac{1}{2\pi \, (0.05)^2} \exp\!\left( -\frac{1}{2(0.05)^2} \big\| \bsx - \bsx_2\big\|^2 \right),
\end{equation}
with centers $\bsx_1 = (0.5,\,0.9)$ and $ \bsx_2 = (0.5,\,0.1)$. The log-normal random field $a(\bsx, \omega)$ is defined as
\begin{equation}
a(\bsx, \omega) = \exp\Big( \sum_{j = 1}^s \sqrt{\mu_j} \, \xi_j(\bsx) Y_j(\omega) \Big)
\end{equation}
where $\big(Y_j(\omega)\big)_{j = 1, \ldots, s}$ are i.i.d. standard normal random variables, and $(\mu_j, \xi_j(\bsx))_{j=1, \ldots, s}$ are the leading eigenvalues and eigenfunctions of the covariance operator defined by the Mat\'ern kernel 
\[
k(\bsx,\bsy) =\frac{2^{1-\nu}}{\Gamma(\nu)} 
\left( \sqrt{2\nu} \|\bsx - \bsy\| \right)^{\nu}
K_{\nu}\!\left( \sqrt{2\nu}\,\|\bsx - \bsy\| \right)
\]
with $\nu = 1$, where $\Gamma$ is the Gamma function and $K_{\nu}$ is the modified Bessel function of the second kind. Here, we truncate the eigen expansion at dimension $s=32$. The quantity of interest is the integral of the potential function over the domain $D$:
\begin{equation}\label{def:diff2d_qoi}
F_2(\omega) = \int_D u(\bsx,\omega) \mathrm{d} \bsx.
\end{equation}
We solve the system of equations \eqref{def:diff2d}--\eqref{def:diff2d_qoi} using the finite element method with piecewise bilinear basis functions on a uniform grid of mesh size $h = 1/32$.

We use QMC point sets with increasing sizes $N \in \{2^{17}, 2^{18}, \ldots, 2^{21}\}$ to verify the convergence in integrating the QoI $F_2(\omega)$ in this example, compared to the ``true'' integral calculated using $N = 2^{22}$ QMC points. The estimated integration error is reported in Figure~\ref{fig:2dpde}. In this case, the observed convergence rate is about $N^{-2.02}$, which is slightly better than our theoretical prediction. 

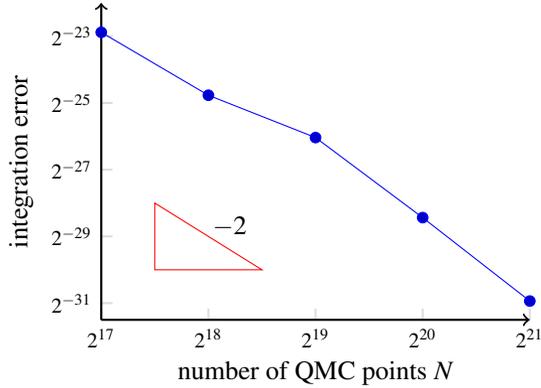
\begin{figure}
\centering
\begin{tikzpicture}
\begin{axis}[%
width=0.5\linewidth,
height=0.4\linewidth,
xmode=normal,
xmin=17,xmax=21,
ymode=normal,
ymin=-31.5,ymax=-22,
xlabel={number of QMC points $N$},
ylabel={integration error},
legend style={at={(0.98,0.02)},anchor=south east},
nodes near coords,point meta = explicit symbolic,
x label style={at={(0.5,-0.1)}},
xtick={17,18,19,20,21},
xticklabels={$2^{17}$,$2^{18}$,$2^{19}$,$2^{20}$,$2^{21}$},
ytick={-23,-25,-27,-29,-31},
yticklabels={$2^{-23}$,$2^{-25}$,$2^{-27}$,$2^{-29}$,$2^{-31}$}
]
\addplot coordinates{
   (17,  -22.8822)
   (18,  -24.7679)
   (19.,  -26.0364)
   (20,  -28.4360)
   (21,  -30.9374)
}; 
\draw[red] (axis cs:17.5,-28) -- (axis cs:17.5,-30) -- (axis cs:18.5,-30) -- cycle;
\node at (axis cs:18.2,-28.7) {$-2$};
\end{axis}
\end{tikzpicture}
\caption{QMC integration errors for integrating the quantity of interest $F_2(\omega)$ in the two-dimensional PDE example, shown as a function of QMC sample size. The integration errors and QMC sample sizes are reported on the base $2$ logarithmic scale. The triangle indicates a reference slope of $-2$. The stochastic dimension is $s=32$ in this example. }\label{fig:2dpde}
\end{figure}

\begin{appendix}
\section{Proof of Lemma~\ref{leWal}}\label{app:A}    

\begin{proof}
Our $k$ is of the form $k=\kappa_0+\kappa_1+\cdots+\kappa_{r-1} 2^{r-1}+2^r$ with digits $\kappa_0,\kappa_1,\ldots,\kappa_{r-1}\in \{0,1\}$. Then $$\wal_k(t)=(-1)^{\kappa_0 t_1+\kappa_1 t_2+\cdots +\kappa_{r-1} t_r+t_{r+1}} \quad \mbox{for $t=t_1 2^{-1}+t_2 2^{-2}+\cdots \in [0,1)$.}$$ In particular, $\wal_k(t)=\wal_k(\frac{m}{2^r})$ if $t \in [\frac{m}{2^r},\frac{m+1/2}{2^r})$ and $\wal_k(t)=-\wal_k(\frac{m}{2^r})$ if $t \in [\frac{m+1/2}{2^r},\frac{m+1}{2^r})$ for $m \in \{0,1,\ldots,2^r-1\}$. This implies that for every $m \in \{0,1,\ldots,2^r-1\}$ we have $\int_{I(m,r)} \wal_k(t) \rd t=0$, where $I(m,r):=[\frac{m}{2^r},\frac{m+1}{2^r})$. Now, let $a \in \{0,1,\ldots,2^r-1\}$ be such that $x \in I(a,r)$. We consider two cases:
\begin{enumerate}
\item If $\frac{a}{2^r} \le x < \frac{a+1/2}{2^r}$, then $2^r x-a \in [0,\tfrac{1}{2})$ and hence $\eta(2^rx)=\eta(2^r x-a )=2^r x-a$. Furthermore, $\wal_k(x)=\wal_k(\frac{a}{2^r})$. Then we have $$\int_0^x \wal_k(t)\rd t = \int_{a/2^r}^x \wal_k\left(\frac{a}{2^r}\right) \rd t = \wal_k\left(\frac{a}{2^r}\right) \left(x-\frac{a}{2^r}\right)=\frac{1}{2^r} \, \wal_k(x) \, \eta(2^r x).$$
\item If $\frac{a+1/2}{2^r} \le x < \frac{a+1}{2^r}$, then $2^r x - a \in [\frac{1}{2},1)$ and hence $\eta(2^rx)=\eta(2^r x-a)=2^r x-a-1$. Furthermore, $\wal_k(x)=-\wal_k(\frac{a}{2^r})$. Then we have  
\begin{align*}
\int_0^x \wal_k(t)\rd t = & \int_{a/2^r}^{(a+(1/2))/2^r} \wal_k\left(\frac{a}{2^r}\right) \rd t - \int_{(a+(1/2))/2^r}^x  \wal_k\left(\frac{a}{2^r}\right) \rd t  \\
= & \wal_k\left(\frac{a}{2^r}\right) \left( \frac{1}{2^{r+1}}-x+\frac{a}{2^r}+\frac{1}{2^{r+1}}\right)\\
= & \frac{1}{2^r} \, \wal_k(x) \, \eta(2^r x).
\end{align*}
\end{enumerate}
This shows the first part of the lemma. The assertion about the supremum follows immediately.
\end{proof}

\section{Proof of Lemma~\ref{le:supp:W}}\label{app:B}   

\begin{proof}
Let $k = 2^{a_1} + \ell$ with $\ell \in \{0,\ldots, 2^{a_1}-1\}$. For $x \in [0,1)\setminus I(m,a_1)$ we have
\begin{equation*}
\int_0^x \bsone_{I(m,a_1)}(y) \wal_k(y) \rd y = 0,
\end{equation*}
and for $x \in I(m,a_1)$ we have 
\begin{align*}
\int_0^x \bsone_{I(m,a_1)}(y) \wal_k(y) \rd y = & \int_{m/2^{a_1}}^x \wal_k(y) \rd y \\
= & \int_0^x \wal_k(y) \rd y - \int_0^{m/2^{a_1}} \wal_k(y) \rd y =  \frac{1}{2^{a_1}} \wal_k(x) \eta(2^{a_1}x),
\end{align*}
where we used Lemma~\ref{leWal}. Put
\begin{equation*}
P_a(x) := \begin{cases} x - \frac{r}{2^a} & \mbox{for } x \in \left[\frac{r}{2^a}, \frac{r+1/2}{2^a}\right) \mbox{ for some } r \in \{0, 1, \ldots, 2^a\}, \\[0,5em] \frac{r+1}{2^a} - x & \mbox{for } x \in \left[\frac{r+1/2}{2^a}, \frac{r+1}{2^a} \right) \mbox{ for some } r \in \{0, 1, \ldots, 2^a\}. \end{cases}
\end{equation*}
If $x \in [\frac{m}{2^{a_1}},\frac{m+1/2}{2^{a_1}})$, then $$\frac{1}{2^{a_1}} \wal_k(x) \eta(2^{a_1}x) =\wal_{\ell}(x) \left(x-\frac{m}{2^{a_1}}\right)=\wal_{\ell}(x) P_{a_1}(x).$$  If $x \in [\frac{m+1/2}{2^{a_1}},\frac{m+1}{2^{a_1}})$, then $$\frac{1}{2^{a_1}} \wal_k(x) \eta(2^{a_1}x) =-\wal_{\ell}(x) \left(x-\frac{m+1}{2^{a_1}}\right)=\wal_{\ell}(x) P_{a_1}(x).$$
Together, we have
\begin{equation*}
\int_0^x \bsone_{I(m,a_1)}(y) \wal_k(y) \rd y = \wal_{\ell}(x) P_{a_1}(x) \bsone_{I(m,a_1)}(x).
\end{equation*}

For $a \in \{0,\ldots,a_1\}$ we have $\bsone_{I(m,a)}=\sum_{m'=m 2^{a_1-a}}^{(m+1) 2^{a_1-a}-1} \bsone_{I(m',a_1)}$ and hence
\begin{equation*}
\int_0^x \bsone_{I(m,a)}(y) \wal_k(y) \rd y = \wal_{\ell}(x) P_{a_1}(x) \bsone_{I(m,a)}(x).
\end{equation*}

Now, let $k \in N_{2}\setminus N_{1}$ be of the form $k=2^{a_1}+2^{a_{2}}$. Then 
\begin{align*}
W_{k,m,2}(x) = & 2^{a_2 /2} \int_0^{x} \boldsymbol{1}_{I(m,a_2)}(y) \wal_k(y) \rd y\\
= & 2^{a_2 /2} \wal_{2^{a_2}}(x) P_{a_1}(x) \bsone_{I(m,a_{2})}(x) = w_{2^{a_2},m,1}(x) P_{a_1}(x).
\end{align*}
Since the function $w_{2^{a_2},m,1}$ is zero in $[0, 1) \setminus I(m,a_2)$, the same holds true for $W_{k,m,2}$.

It is obvious that $\widetilde{W}_{k,m,2}(x)=0$ if $x < \frac{m}{2^{a_2}}$. If $x \ge \frac{m+1}{2^{a_2}}$, then
\begin{align*}
\widetilde{W}_{k,m,2}(x) = & \int_{m/2^{a_2}}^{(m+1)/2^{a_2}} 2^{a_2/2} \wal_{2^{a_2}}(y) P_{a_1}(y)\rd y\\
= & \int_{m/2^{a_2}}^{(m+1/2)/2^{a_2}} 2^{a_2/2} P_{a_1}(y)\rd y - \int_{(m+1/2)/2^{a_2}}^{(m+1)/2^{a_2}} 2^{a_2/2} P_{a_1}(y)\rd y \\
= & 2^{a_2/2} \sum_{r=m 2^{a_1-a_2}}^{(m+1/2) 2^{a_1-a_2}-1} \int_{r/2^{a_1}}^{(r+1)/2^{a_1}} P_{a_1}(y)\rd y - 2^{a_2/2} \sum_{r=(m+1/2) 2^{a_1-a_2}}^{(m+1) 2^{a_1-a_2}-1} \int_{r/2^{a_1}}^{(r+1)/2^{a_1}} P_{a_1}(y)\rd y \\
= & 2^{a_2/2} \left(\sum_{r=m 2^{a_1-a_2}}^{(m+1/2) 2^{a_1-a_2}-1} \frac{1}{2^{2a_1+2}} -  \sum_{r=(m+1/2) 2^{a_1-a_2}}^{(m+1) 2^{a_1-a_2}-1} \frac{1}{2^{2a_1+2}}\right)\\
= & 0,
\end{align*}
as claimed.
\end{proof}

\end{appendix}

\bibliographystyle{abbrvnat} 
\bibliography{ref}

\end{document}